\numberwithin{equation}{subsection}
\title[Bott-Samelson moment graph and quantum cohomology] {The moment graph for
Bott-Samelson varieties and applications to quantum cohomology} \author{Camron
Withrow} \address{Department of Mathematics, Virginia Tech, 460 McBryde Hall,
Blacksburg, VA 24060} \email{cwithrow@vt.edu}
\newtheorem{mainthm}{Theorem} \newtheorem{maincor}{Corollary}[mainthm]
\newtheorem{prop}{Proposition}[section] \newtheorem{thm}{Theorem}[section]
\newtheorem{lemma}{Lemma}[section] \newtheorem{cor}{Corollary}[section]
\theoremstyle{definition} \newtheorem{defn}{Definition}[section]
\newtheorem{example}{Example}[section] \newtheorem{remark}{Remark}[section]
\newtheorem*{thank}{Acknowledgments}
\newcommand{\Z}{\mathbb{Z}} \newcommand{\Q}{\mathbb{Q}}
\newcommand{\R}{\mathbb{R}} \newcommand{\C}{\mathbb{C}}
\renewcommand{\P}{\mathbb{P}} \renewcommand{\epsilon}{\varepsilon}
\newcommand{\w}{\mathfrak{w}} \renewcommand{\t}{\mathfrak{t}}
\newcommand{\codim}{\text{codim}\>} \newcommand{\virt}{\text{virt}}
\newcommand{\vdim}{\text{vdim}\,}
\newcommand{\moduli}[3]{\overline{M}_{0,#1}(#2,#3)}
\newcommand{\GW}[2]{I_{#2}(#1)}
\begin{document}

\begin{abstract} We give a description of the moment graph for Bott-Samelson
		varieties in arbitrary Lie type. We use this, along with curve
		neighborhoods and explicit moduli space computations, to compute a
		presentation for the small quantum cohomology ring of a particular
Bott-Samelson variety in Type $A$. We also show the conjecture $\mathcal{O}$ of
Galkin, Golyshev, and Iritani holds for that Bott-Samelson variety.
\end{abstract} \maketitle \tableofcontents

\section{Introduction} \label{intro}
The (small) quantum cohomology of homogeneous varieties has been studied
extensively due to its connection with questions in enumerative geometry; see
the introduction in \cite{FultonPandharipande} for a discussion of enumerative
results relating to the quantum cohomology of the projective plane. The key to
obtaining a presentation for the quantum cohomology ring is to quantize the
relations in the ordinary cohomology ring. For flag manifolds $G/B$, this
quantization involves the Toda lattice; see \cite{Kim}.

More generally, the quantum cohomology of toric varieties is well understood
(originally due to Batyrev \cite{Batyrev}; see \cite{ToricQH} for a modern
discussion, Proposition 2.5 in particular.) Vakil in \cite{gwsurfaces} computed
Gromov-Witten invariants for Hirzebruch surfaces and (all but two) del Pezzo
surfaces in all genera, and shows these invariants are enumerative; see Section
8 in \cite{gwsurfaces} for a discussion of curve counts for Hirzebruch
surfaces. Little is known about the quantum cohomology of varieties which are
not convex and/or not toric. For example, see \cite{Pech} and \cite{Shifler}.
Bott-Samelson varieties are generally not convex in dimension greater than one,
and are generally not toric in dimension greater than two, however they are
intimately related to homogeneous spaces $G/B$, so it is natural consider their
quantum cohomology.

In this article, we describe the moment graph of Bott-Samelson varieties with a
view towards describing the (small) quantum cohomology ring of Bott-Samelson
varieties. If $X$ is a variety on which an algebraic torus $T$ acts with
finitely many fixed points, the moment graph is defined as follows: the
vertices are the set of fixed points $X^T$, and two vertices $x,y\in X^T$ are
connected by an edge if there is a $T$-stable curve containing both $x,y$. 

In order to describe our results more concretely, we recall some constructions
and fix some notation. Let $G$ be a simple Lie group over
$\C$. Fix a maximal torus contained in a Borel subgroup $T\subset B\subset G$;
the Weyl group is denoted $W:=N_G(T)/T$. The associated root system is denoted
$\Phi=\Phi(G,T)$, the base corresponding to the fixed Borel subgroup is denoted
$\Delta\subset \Phi$, and the set of positive roots is denoted $\Phi^+$. The
minimal parabolic subgroup corresponding to $\alpha\in \Delta$ is denoted
$P_{\alpha}$. 

For a sequence of simple reflections
$(s_{\alpha_1},s_{\alpha_2},\ldots,s_{\alpha_n})\in W^n$, the corresponding
Bott-Samelson variety is denoted $Z=Z(\alpha_1,\ldots,\alpha_n)$. A
Bott-Samelson variety is a tower of $\P^1$-bundles $$\begin{tikzcd}
Z(\alpha_1,\ldots,\alpha_n) \arrow{r}{\pi_n} & Z(\alpha_1,\ldots,\alpha_{n-1})
\arrow{r}{\pi_{n-1}} & \cdots \arrow{r}{\pi_2} & Z(\alpha_1) \arrow{r}{\pi_1} &
\{pt\} \end{tikzcd}$$ where each bundle has a natural section $s_k:
Z(\alpha_1,\ldots,\alpha_{k-1})\to Z(\alpha_1,\ldots,\alpha_k)$, and each
Bott-Samelson variety has a morphism $\theta_k: Z(\alpha_1,\ldots,\alpha_k)\to
G/B$.


Bott-Samelson varieties are $T$-varieties (this is described in Section
\ref{prelims}.) The fixed point set $Z^T$ is easy to describe; the $T$-fixed
points correspond to subsequences of $(s_{\alpha_1},\ldots,s_{\alpha_n})$. The
combinatorial object which corresponds to the $T$-fixed points are
$\varepsilon\in \{0,1\}^n$; for $x\in Z^T$, we will denote the binary $n$-tuple
corresponding to $x$ by $\varepsilon_x$. 

The $n$-tuple $\varepsilon_x$ can be described inductively as follows:
$\varepsilon_x$ is obtained from the $(n-1)$-tuple $\varepsilon_{\pi(x)}$ by
appending either a zero or one according as $x\in s(Z')$ or $x\notin s(Z')$
respectively. The next definition comes from \cite[Section 2]{Willems04}; we
have slightly modified the notation.

\begin{defn} \label{weyl prod} For $\varepsilon\in \{0,1\}^n$, denote by
	$\pi_+(\varepsilon)$ the set of entries $i$ such that $\varepsilon_i=1$.
	Define $$w_k(\varepsilon) = \prod_{\substack{1\leq i\leq k\\i\in
	\pi_+(\varepsilon)}}s_{\alpha_i}$$ ($w_k(\varepsilon) = 1$ if $\{1\leq i \leq
k, i\in \pi_+(\varepsilon)\}=\varnothing$), set
$w(\varepsilon)=w_n(\varepsilon)$, and define $\varepsilon(\alpha_k) =
w_k(\varepsilon)\alpha_k\in \Phi$ for each $1\leq k\leq n$.  \end{defn}

The next definition gives us a notation for the fixed points which lie on the
same fiber as a given fixed point.

\begin{defn} For $x\in Z^T$, define $\varepsilon_x^0$ and
	$\varepsilon_x^{\infty}$ by adjoining either a 0 or 1 respectively to
	$\varepsilon_{\pi(x)}$. Note, $\varepsilon_x^0$ corresponds to the $T$-fixed
	point in $s(Z')$ which is also contained in the fiber of $\pi:Z\to Z'$
containing $x$, and $\varepsilon_x^{\infty}$ corresponds to the other $T$-fixed
point in that fiber.  \end{defn}

Our main theorem is an inductive characterization of the set of $T$-stable
curves in $Z$. Since any $T$-stable curve is a union of irreducible $T$-stable
curves, we characterize the points $x,y\in Z^T$ which are joined by irreducible
$T$-stable curves.

\begin{mainthm}\label{main theorem} Let $x,y\in Z^T$, and let $k$ be the first
	index where $\varepsilon_x,\varepsilon_y$ differ. If $k=n$, then $x,y$
	are joined by an $T$-stable fiber of $\pi:Z\to Z'$. Otherwise, $k<n$
	and we suppose that $\pi(x),\pi(y)$ are joined by an irreducible
	$T$-stable curve. Let $C$ be one such $T$-stable curve joining
	$\pi(x),\pi(y)$, and let $h$ be the class of the fiber of $\pi$.

There are four possibilities for the restriction of the moment graph to
	$\{\varepsilon_x^0,\varepsilon_x^{\infty},\varepsilon_y^0,\varepsilon_y^{\infty}\}$:

\begin{multicols}{2} Case I.  \begin{center}
	\begin{tikzpicture}[roundnode/.style={circle, draw=gray!60, fill=gray!5, very
		thick, minimum size=7mm}, ]
\node[roundnode]	(basepoint) {$\varepsilon_x^0$}; \node[roundnode] (upper)
		[above=of basepoint]		{$\varepsilon_x^{\infty}$}; \node[roundnode]
		(right)		[right=of basepoint] {$\varepsilon_y^0$}; \node[roundnode]
		(upright) [above=of right] {$\varepsilon_y^{\infty}$};

\draw (upper.south) -- (basepoint.north) 	node[pos=0.5,left] {$h$}; \draw
	(upper.east) -- (upright.west); \draw (basepoint.east) -- (right.west)
	node[pos=0.5,below] 	{$s_*[C]$}; \draw (upright.south) -- (right.north)
	node[pos=0.5,right] {$h$}; \end{tikzpicture} \end{center}

Case II.  \begin{center} \begin{tikzpicture}[roundnode/.style={circle,
	draw=gray!60, fill=gray!5, very thick, minimum size=7mm}, ]
\node[roundnode]	(basepoint) {$\varepsilon_x^0$}; \node[roundnode] (upper)
	[above=of basepoint]		{$\varepsilon_x^{\infty}$}; \node[roundnode] (right)
	[right=of basepoint] {$\varepsilon_y^0$}; \node[roundnode]	(upright)
	[above=of right] {$\varepsilon_y^{\infty}$};

\draw (upper.south) -- (basepoint.north) node[pos=0.5,left]	{$h$}; \draw[ultra
	thick] (basepoint.east) -- (right.west) node[pos=0.5,below]	{$s_*[C]$}; \draw
	(upright.south) -- (right.north) node[pos=0.5,right]	{$h$}; \draw
	(upper.south east) -- (right.north west); \draw (basepoint.north east) --
(upright.south west); \end{tikzpicture} \end{center}

Case III.  \begin{center} \begin{tikzpicture}[roundnode/.style={circle,
	draw=gray!60, fill=gray!5, very thick, minimum size=7mm}, ]
\node[roundnode]	(basepoint) {$\varepsilon_x^0$}; \node[roundnode] (upper)
	[above=of basepoint]		{$\varepsilon_x^{\infty}$}; \node[roundnode] (right)
	[right=of basepoint] {$\varepsilon_y^0$}; \node[roundnode]	(upright)
	[above=of right] {$\varepsilon_y^{\infty}$};

\draw (upper.south) -- (basepoint.north) node[pos=0.5,left]		{$h$}; \draw
	(upper.east) -- (upright.west) node[pos=0.5,above] {$s_*[C]$}; \draw
	(basepoint.east) -- (right.west) node[pos=0.5,below]		{$s_*[C]$}; \draw
	(upright.south) -- (right.north) node[pos=0.5,right]		{$h$}; \draw[ultra
	thick] (basepoint.north east) -- (upright.south west); \end{tikzpicture}
\end{center}

Case IV.  \begin{center} \begin{tikzpicture}[roundnode/.style={circle,
	draw=gray!60, fill=gray!5, very thick, minimum size=7mm}, ]
\node[roundnode]	(basepoint) {$\varepsilon_x^0$}; \node[roundnode] (upper)
	[above=of basepoint]		{$\varepsilon_x^{\infty}$}; \node[roundnode] (right)
	[right=of basepoint] {$\varepsilon_y^0$}; \node[roundnode]	(upright)
	[above=of right] {$\varepsilon_y^{\infty}$};

\draw (upper.south) -- (basepoint.north) node[pos=0.5,left]		{$h$}; \draw
	(upper.east) -- (upright.west) node[pos=0.5,above] {$s_*[C]$}; \draw
	(basepoint.east) -- (right.west) node[pos=0.5,below]		{$s_*[C]$}; \draw
	(upright.south) -- (right.north) node[pos=0.5,right]		{$h$}; \draw[ultra
	thick] (upper.south east) -- (right.north west); \end{tikzpicture}
	\end{center} \end{multicols}

The cases are characterized as follows: \begin{enumerate}[itemsep=1ex]
	\item[I.] $\varepsilon_x^0(\alpha_k)\neq \varepsilon_x^0(\alpha_n)$ and
		$\varepsilon_y^0(\alpha_k)\neq \varepsilon_y^0(\alpha_n)$;

\item[II.] $\varepsilon_x^0(\alpha_k) = \varepsilon_x^0(\alpha_n)$ and
	$\varepsilon_y^0(\alpha_k) = \varepsilon_y^0(\alpha_n)$;

\item[III.] $\varepsilon_x^0(\alpha_k) = \varepsilon_x^0(\alpha_n)$ and
	$\varepsilon_y^0(\alpha_k)\neq \varepsilon_y^0(\alpha_n)$;

\item[IV.] $\varepsilon_x^0(\alpha_k)\neq \varepsilon_x^0(\alpha_n)$ and
$\varepsilon_y^0(\alpha_k) = \varepsilon_y^0(\alpha_n)$; \end{enumerate}

In each case, the unlabeled curves have the same homology class, which are as
follows: \begin{enumerate}[itemsep=1ex] \item[I.] \begin{align*}\begin{cases}
			s_*[C] - (\alpha_k,\alpha_n^{\vee})h, &w(\varepsilon_x^0)\neq
			w(\varepsilon_y^0)\\ s_*[C], &w(\varepsilon_x^0) = w(\varepsilon_y^0)
\end{cases}\end{align*}

\item[II.] $$s_*[C] - h$$

\item[III.] $$s_*[C] + h$$

\item[IV.] $$s_*[C] + h$$ \end{enumerate}

In cases II, III, and IV, the bold line indicates there is a one-dimensional
family of $T$-stable curves joining those fixed points.  \end{mainthm}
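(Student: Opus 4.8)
The plan is to run an induction on $n$, where the inductive step analyzes a single $\mathbb{P}^1$-bundle $\pi\colon Z\to Z'$ restricted to the fiber square sitting over the chosen $T$-stable curve $C\subset Z'$. The restriction $\theta_n^{-1}(\pi^{-1}(C))$, or more precisely the pullback bundle $\pi^{-1}(C)\to C$, is a Hirzebruch surface $\mathbb{F}_m$ (a $\mathbb{P}^1$-bundle over $\mathbb{P}^1\cong C$), and every $T$-stable curve in $Z$ whose image is $C$ or a point lies on such a surface or in a single fiber. So the whole statement reduces to: (i) identifying the twisting integer $m$ of this Hirzebruch surface in terms of the root-theoretic data $\varepsilon_x^0(\alpha_k)$, $\varepsilon_x^0(\alpha_n)$, etc.; (ii) reading off the $T$-stable curves of $\mathbb{F}_m$ and their classes; and (iii) the boundary case $k=n$, which is immediate since then $x,y$ lie in a common fiber of $\pi$ and that fiber is the $T$-stable curve of class $h$.

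**Key steps.** First I would set up the local model: over $C$, the bundle $\pi^{-1}(C)\to C$ is classified by the degree of the normal/tangent data of the section, which one computes from how $\theta_n$ and the defining line bundle of the $\mathbb{P}^1$-bundle $Z\to Z'$ interact with $C$. The section $s_n(C)\subset Z$ has self-intersection determined by the pairing $(\alpha_k,\alpha_n^\vee)$ — this is where the Willems-style description of $\varepsilon(\alpha_k)$ in Definition \ref{weyl prod} enters, since the weight of the $T$-action on the normal bundle of a fixed point $x$ in its fiber is $\pm w_n(\varepsilon_x)\alpha_n = \pm\varepsilon_x(\alpha_n)$, and the weight on $C$ near $\pi(x)$ is governed by $\varepsilon_x(\alpha_k)$ (the edge label $s_*[C]$ localizes to a root that is, up to the induction, $\varepsilon_{\pi(x)}$-conjugate of a simple root). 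Second, I would compare the two fixed points $\pi(x),\pi(y)$ at the two ends of $C$: the four cases I--IV correspond exactly to the four combinations of whether the fiber-weight $\varepsilon^0(\alpha_n)$ agrees or disagrees with the base-weight $\varepsilon^0(\alpha_k)$ at each end, and this agreement/disagreement pattern is precisely what determines whether the section $s_n(C)$ and the "other" section (the $\infty$-section) are the two $T$-fixed sections of a Hirzebruch surface (Case I), or whether the surface degenerates so that one of the putative sections breaks into a section plus a fiber, forcing the one-parameter family of $T$-stable curves indicated by the bold edge (Cases II, III, IV). Third, in each case I would compute the class of the unlabeled edges by intersection theory on $\mathbb{F}_m$: writing the class of $s_n(C)$ as $s_*[C]$ and the fiber as $h$, the complementary section has class $s_*[C] + (\text{self-intersection correction})\,h$, and the correction is $-(\alpha_k,\alpha_n^\vee)$ in Case I (split further by whether $w(\varepsilon_x^0)=w(\varepsilon_y^0)$, which detects whether the two ends of $C$ contribute the same or opposite twist), and $\pm 1$ in the degenerate cases, matching the stated formulas. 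The one-dimensional family in Cases II--IV is the linear system of the relevant section class on the Hirzebruch surface; I would note $\dim = 1$ by Riemann--Roch or by exhibiting the $\mathbb{C}^*$-worth of curves explicitly as translates of $s_n(C)$ by the non-fiber $T$-weight.

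**Main obstacle.** The hard part will be step (i): pinning down the exact sign conventions and the precise identification of the twisting integer $m$ with the root pairing, uniformly across all four cases and in arbitrary Lie type. Concretely, one must show that the $T$-weight governing the base direction along $C$ at the fixed point over $x$ is $\varepsilon_{\pi(x)}(\alpha_k)$ while the weight in the new fiber direction is $\varepsilon_x(\alpha_n)$, and that these differ by exactly the conjugation by $s_{\alpha_n}$ when $\varepsilon$ is extended by $1$ rather than $0$ — tracking this through the tower $\pi_n,\dots,\pi_1$ and the maps $\theta_k$ is where all the bookkeeping lives. I expect the cleanest route is to localize: compute the two tangent weights at each of the four fixed points using the inductive hypothesis for $C\subset Z'$ (which gives the label $s_*[C]$ as an explicit root) together with the standard formula for the fiber weight of a minimal-parabolic $\mathbb{P}^1$-bundle, then invoke the GKM/moment-graph criterion that two fixed points are joined by a $T$-stable curve of a given class iff the corresponding weights are compatible. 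Once the weights are correct, cases I--IV and their curve classes fall out mechanically, and the $k=n$ base case is essentially a definition-chase.
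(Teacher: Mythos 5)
Your proposal follows essentially the same route as the paper: induct up the tower, restrict to the surface $\Sigma=\pi^{-1}(C)$, classify its irreducible $T$-stable curves by comparing the two tangent weights at each fixed point (a coincidence of weights producing the one-parameter families on the bold edges), and compute the unlabeled classes by pushing forward along $\theta$ and $\pi$. One caution on your step (i)--(ii): the abstract twisting integer $m$ is not the right invariant --- in Cases II--IV and in the degenerate sub-case of Case I the surface is the same $\mathbb{F}_0\cong\P^1\times\P^1$, and what distinguishes them is the $T$-action (product versus diagonal), which the paper pins down via the fiber-product description $\Sigma=C\times_{G/P_{\alpha_n}}G/B$; relatedly, the one-dimensionality of the family is not a Riemann--Roch count (the full linear system of the relevant section class on $\P^1\times\P^1$ is three-dimensional) but is exactly the $\C^*$-worth of translated sections $s_t$ that you mention as the alternative.
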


\begin{remark} Since $\pi:Z\to Z'$ is proper, if $x,y$ are joined by a
	$T$-stable curve, then $\pi(x),\pi(y)$ are either joined by a $T$-stable
	curve, or $\pi(x)=\pi(y)$. In particular, the inductive hypothesis in Theorem
	\ref{main theorem} is a necessary condition for $x,y$ to be joined by a
	$T$-stable curve.

Moreover, since $\pi:Z\to Z'$ is a $\P^1$-bundle with a section $s$, the
push-forward $s_*:H_*(Z')\to H_*(Z)$ is the inclusion $H_*(Z')\subset H_*(Z)$.
Thus, the homology classes of $T$-stable curves are characterized inductively
by our theorem.  \end{remark}

\begin{example} In type $A_2$, the Bott-Samelson variety $Z(\alpha_1,\alpha_2)$
	is the Hirzebruch surface $\mathbb{F}_1$, and hence the moment graph is
	\begin{center} \begin{tikzpicture}[roundnode/.style={circle, draw=gray!60,
		fill=gray!5, very thick, minimum size=7mm}, ]
\node[roundnode]	(basepoint) {00}; \node[roundnode]	(upper)		[above=of
		basepoint] {01}; \node[roundnode] 	(right)		[right=of basepoint] {10};
		\node[roundnode]	(upright)		[above=of right] {11};

\draw (upper.south) -- (basepoint.north) 	node[pos=0.5,left] {$[F]$}; \draw
		(upper.east) -- (upright.west) node[pos=0.5,above]	{$[C]$}; \draw
		(basepoint.east) -- (right.west)		node[pos=0.5,below] {$[E]$}; \draw
		(upright.south) -- (right.north) node[pos=0.5,right]		{$[F]$};
	\end{tikzpicture} \end{center} where $F$ is the fiber, $E$ is the exceptional
	divisor (i.e. the curve with self-intersection $-1$,) and $C$ is related to
	$E$ and $F$ in Pic $\mathbb{F}_1$ by $C = E+F$.

In the Bott-Samelson variety $Z(\alpha_1,\alpha_2,\alpha_1)$, the restriction
	of the moment graph to the fixed points $\{000,100,001,101\}$ is:
	\begin{center} \begin{tikzpicture}[roundnode/.style={circle, draw=gray!60,
		fill=gray!5, very thick, minimum size=7mm}, ]
\node[roundnode]	(basepoint) {$000$}; \node[roundnode]	(upper)		[above=of
		basepoint] {$001$}; \node[roundnode] 	(right)		[right=of basepoint]
		{$100$}; \node[roundnode]	(upright)		[above=of right]			{$101$};

\draw (upper.south) -- (basepoint.north) node[pos=0.5,left]	{$[Z_{001}]$};
		\draw[ultra thick] (basepoint.east) -- (right.west) node[pos=0.5,below]
		{$[Z_{100}]$}; \draw (upright.south) -- (right.north) node[pos=0.5,right]
		{$[Z_{001}]$}; \draw (upper.south east) -- (right.north west); \draw
	(basepoint.north east) -- (upright.south west); \end{tikzpicture}
\end{center} where the diagonal curves have homology class $[Z_{100}] -
[Z_{001}]$. We draw the complete moment graph for this Bott-Samelson variety in
Example \ref{momentgraph picture}.

In the Bott-Samelson variety $Z(\alpha_1,\alpha_2,\alpha_1,\alpha_2)$, the
moment graph restricted to the fixed points $\{0000,1010,0001,1011\}$ is:
\begin{center} \begin{tikzpicture}[node distance=2.6cm,
	roundnode/.style={circle, draw=gray!60, fill=gray!5, very thick, minimum
	size=7mm}, ]
\node[roundnode]	(basepoint) {$0000$}; \node[roundnode]	(upper)		[above=of
	basepoint] {$0001$}; \node[roundnode] 	(right)		[right=of basepoint]
	{$1010$}; \node[roundnode]	(upright)		[above=of right]			{$1011$};

\draw (upper.south) -- (basepoint.north) node[pos=0.5,left]		{$[Z_{0001}]$};
	\draw (upper.east) -- (upright.west) node[pos=0.5,above]
	{$[Z_{1000}]-[Z_{0010}]$}; \draw (basepoint.east) -- (right.west)
	node[pos=0.5,below]		{$[Z_{1000}]-[Z_{0010}]$}; \draw (upright.south) --
	(right.north) node[pos=0.5,right]		{$[Z_{0001}]$}; \draw[ultra thick]
	(basepoint.north east) -- (upright.south west) node[pos=0.5,above,sloped]
	{$[Z_{1000}]-[Z_{0010}]+[Z_{0001}]$}; \end{tikzpicture} \end{center}

All homology classes have been expressed in the Bott-Samelson subvariety basis,
which we describe in Section \ref{prelims}.  \end{example}

In order to obtain a presentation for the quantum cohomology of Bott-Samelson
varieties, we need to show that certain Gromov-Witten invariants vanish, and we
need to compute some nonzero Gromov-Witten invariants. We address the non-zero
invariant calculations first. 

The Gromov-Witten invariants $I_{\beta}(\gamma_1,\ldots,\gamma_n)$ are defined
by intersection theory in the moduli space. However, in general we cannot
control the geometry of the moduli space $\overline{M}_{0,1}(Z,\beta)$. Under
some conditions on the curve class $\beta$ and the Lie group $G$, we are
able to prove the moduli space $\overline{M}_{0,1}(Z,\beta)$ is smooth, and so
we are able to perform the intersection theory calculations directly to
determine certain Gromov-Witten invariants.

\begin{mainthm} \label{modulispace thm} Suppose $\beta$ is indecomposable and
	effective, and suppose $G$ is of simply laced type. Then
	$\overline{M}_{0,1}(Z,\beta)$ is unobstructed; that is,
	$\overline{M}_{0,1}(Z,\beta)$ is smooth, irreducible, and has the expected
dimension $$\dim \moduli{1}{Z}{\beta} = \dim Z + \int_{\beta}c_1(T_Z) - 2.$$
\end{mainthm}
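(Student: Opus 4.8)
The plan is to reduce the statement to a cohomology vanishing for the obstruction spaces of stable maps, and then to verify that vanishing by climbing the tower of $\P^1$-bundles. Since $\beta$ is indecomposable it is not a sum of two nonzero effective classes, and this forces every stable map $f\colon C\to Z$ of class $\beta$ to have irreducible domain $C\cong\P^1$: a contracted component would, for stability, need at least three special points, and with only the single marked point available it would need at least two nodes and hence at least two non-contracted neighbours, while two non-contracted components would carry two nonzero effective summands of $\beta$ — either way $\beta$ decomposes. So $\moduli{1}{Z}{\beta}$ parametrizes genuine maps $\P^1\to Z$ with one marked point, and for such maps the deformation theory of stable maps has tangent space $H^0(\P^1,f^*T_Z)$ and obstruction space $H^1(\P^1,f^*T_Z)$ (the marked point shifts dimensions by one and is absorbed into the expected-dimension count). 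Consequently, if $H^1(\P^1,f^*T_Z)=0$ for every $f$ of class $\beta$, then $\moduli{1}{Z}{\beta}$ is smooth of the expected dimension $\dim Z+\int_{\beta}c_1(T_Z)-2$, and the whole assertion (apart from irreducibility, treated below) reduces to this vanishing.

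Next I would use that $Z=Z_n$ is assembled from $\P^1$-bundles $\pi_k\colon Z_k\to Z_{k-1}$, each of the form $Z_k\cong\P(\mathcal{O}\oplus\theta_{k-1}^*\mathcal{L}_{-\alpha_k})$, where $\mathcal{L}_\lambda$ denotes the $G/B$-line bundle attached to a character $\lambda$. The relative tangent sequences $0\to T_{Z_k/Z_{k-1}}\to T_{Z_k}\to\pi_k^*T_{Z_{k-1}}\to 0$ assemble, after pulling back along $f$, into a filtration of $f^*T_Z$ whose successive quotients are the line bundles $f_k^*T_{Z_k/Z_{k-1}}$ on $\P^1$, where $f_k\colon\P^1\to Z_k$ is the map induced by $f$. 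On $\P^1$ a line bundle has vanishing $H^1$ exactly when its degree is at least $-1$, and $H^1$ of an extension vanishes once it vanishes for sub and quotient; hence it suffices to prove
\[
\deg\, f_k^*T_{Z_k/Z_{k-1}}\ \geq\ -1\qquad(1\leq k\leq n)
\]
for every stable map $f$ of class $\beta$. The relative Euler sequence identifies $T_{Z_k/Z_{k-1}}$ with $\mathcal{O}_{Z_k}(2)$ twisted by a line bundle pulled back via $\pi_k$ and $\theta_{k-1}$ from the $G/B$-line bundle associated to $\pm\alpha_k$; thus the left-hand degree equals $2e_k$ plus the degree against that line bundle of the image of $f_{k-1}$ in $G/B$, where $e_k$ is the degree of $f_k$ on a fibre of $\pi_k$.

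Third — and this is the crux — I would bound the two contributions. Since $G/B$ is convex, any effective rational curve there has class a nonnegative combination of Schubert curve classes $d_{s_i}$, and the pairing of $\mathcal{L}_{\alpha_k}$ with $d_{s_i}$ is $\langle\alpha_k,\alpha_i^\vee\rangle$, which equals $2$ for $i=k$ and, because $G$ is simply laced, lies in $\{0,-1\}$ for $i\neq k$; so the $G/B$-term in the displayed degree can be negative only through the Dynkin neighbours of $\alpha_k$, and only by as much as the number of such neighbouring Schubert curves occurring in the image of $f_{k-1}$. The remaining point is that for $\beta$ indecomposable this negativity is always compensated by $2e_k$: using Theorem~\ref{main theorem} to describe $H_2(Z)$ in the Bott--Samelson subvariety basis, to determine which classes are effective, and to single out the indecomposable ones, one checks in each case that the fibre degrees $e_k$ equal $0$ or $1$ and are forced to be positive whenever a neighbouring Schubert curve contributes to the base, which yields the inequality. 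I expect this combinatorial verification — enumerating the indecomposable effective classes from the moment graph and checking the degree bound against each — to be the main obstacle; the simply-laced hypothesis enters precisely here, to keep the off-diagonal Cartan integers equal to $-1$ rather than $-2$ or $-3$.

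Finally, for irreducibility I would again induct up the tower. Writing $\beta^{(k)}$ for the pushforward of $\beta$ to $Z_k$, a stable map to $Z_k$ of class $\beta^{(k)}$ (irreducible domain, by the reduction above) is the data of a stable map $g\colon\P^1\to Z_{k-1}$ of class $\beta^{(k-1)}$ together with a lift of $g$ to $Z_k$, i.e.\ a section of the pulled-back $\P^1$-bundle $g^*Z_k$; the latter is a Hirzebruch surface whose isomorphism type depends only on $\beta$, and the sections in the prescribed numerical class form an open subset of a complete linear system, hence an irreducible family of dimension independent of $g$. Therefore the forgetful map $\moduli{1}{Z_k}{\beta^{(k)}}\to\moduli{1}{Z_{k-1}}{\beta^{(k-1)}}$ has irreducible equidimensional fibres over an irreducible base — the base case, at the smallest index where the pushforward is nonzero, reducing to $\moduli{1}{\P^1}{1}$, which is irreducible — so the total space is irreducible. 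Together with the smoothness and the dimension computation of the previous paragraphs, this gives the theorem.
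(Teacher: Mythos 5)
Your overall skeleton --- irreducible domain from indecomposability, a filtration of $f^*T_Z$ by the relative tangent line bundles of the $\P^1$-bundle tower, and the reduction to $\deg f^*T_{Z_k/Z_{k-1}}\geq -1$ on each graded piece --- is the same as the paper's, which packages the identical argument as an induction on the tower via $0\to T_{\pi}\to T_Z\to \pi^*T_{Z'}\to 0$. However, the step you yourself call ``the crux'' and ``the main obstacle'' is exactly the step you do not prove: you propose to establish the degree bound by enumerating the indecomposable effective classes from the moment graph and checking the inequality against each, with the fibre degrees $e_k$ compensating the negative base contributions. For a general Bott--Samelson variety in arbitrary simply-laced type this is not a finite check, you give no uniform mechanism that makes it succeed, and so the proof has a hole precisely where the simply-laced hypothesis is supposed to do its work.

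The missing idea is that no enumeration is needed. Because the effective cone is generated by classes of $B$-stable curves and $\beta$ is indecomposable, $\beta$ is the class of a single irreducible $T$-stable curve $D$. By Lemma \ref{isomorphism}, the image of $D$ under the composite $Z\to Z(\alpha_1,\ldots,\alpha_k)\xrightarrow{\theta_k} G/B$ is either a point or a translate of a single Schubert curve $X(s_{\alpha'})$ onto which $D$ maps isomorphically; hence the pushforward of $\beta$ to $G/B$ at level $k$ is either $0$ or $[X(s_{\alpha'})]$ with coefficient exactly one --- never a combination of several Schubert classes, so your worry about accumulating Dynkin neighbours does not arise. Combined with the identification $T_{\pi_k}=\theta_k^*T_{p_{\alpha_k}}$ coming from the fibre-product description of $Z(\alpha_1,\ldots,\alpha_k)$ (which also absorbs your separate ``$2e_k$'' term: the fibre direction is just the case $\alpha'=\alpha_k$), this gives $\deg f^*T_{\pi_k}\in\{0,\,(\alpha_k,\alpha'^{\vee})\}\subseteq\{2,0,-1\}$ in simply-laced type, and the $H^1$-vanishing follows with no case analysis. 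Two smaller remarks: the paper's inductive formulation also requires checking that $\pi_*\beta$ remains indecomposable (which it does, since otherwise $\beta-s_*\beta_1'$ would be effective); your unrolled filtration sidesteps this for smoothness, but your irreducibility induction implicitly relies on it, and your claim that every lift of $g$ to $g^*Z_k$ is a section in a fixed linear system needs justification. Since the paper's own proof only establishes the obstruction-space vanishing, your extra attention to irreducibility is welcome, but it does not repair the central gap.
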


Theorem \ref{modulispace thm} has an important corollary which allows us to
carry out the necessary calculations.

\begin{maincor} \label{fiber cor} If $h\in H_2(Z)$ is the class of the fiber of
$\pi:Z\to Z'$, then $ev:\overline{M}_{0,1}(Z,h)\to Z$ is an isomorphism.
\end{maincor}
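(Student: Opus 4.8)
The plan is to describe $\moduli{1}{Z}{h}$ set-theoretically first, and then upgrade the evaluation map to an isomorphism using Theorem \ref{modulispace thm}. Since the fiber class satisfies $\pi_*h=0$, any genus-zero stable map $f\colon C\to Z$ of class $h$ has $\pi\circ f$ of degree $0$, hence constant; so $f$ factors through a single fiber $F\cong\P^1$ of $\pi$, and $f\colon C\to F$ has degree one. Because there is only one marked point and the degree is the minimal value one, no component of $C$ can be contracted: a contracted genus-zero component would need at least three special points, but outside the unique component mapping isomorphically onto $F$ there are at most two (one node and possibly the marked point). Hence $C\cong\P^1$, $f$ is an isomorphism onto a fiber, and a closed point of $\moduli{1}{Z}{h}$ is precisely a fiber $F$ together with a point $p\in F$ — equivalently a single point $p\in Z$, from which $F=\pi^{-1}(\pi(p))$ is recovered. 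Under this identification $ev$ sends such a stable map to $f(p)=p$, so $ev$ is bijective on closed points.

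Next I would exhibit the inverse morphism directly. Consider the fiber product $Z\times_{Z'}Z$ with its two projections: the first projection $\rho\colon Z\times_{Z'}Z\to Z$ is a $\P^1$-bundle whose fiber over $p$ is $\pi^{-1}(\pi(p))$, the diagonal $\Delta\colon Z\to Z\times_{Z'}Z$ is a section of $\rho$, and the second projection $q\colon Z\times_{Z'}Z\to Z$ restricts on each fiber of $\rho$ to the inclusion of that fiber into $Z$. Thus $(\rho,\Delta,q)$ is a family over $Z$ of genus-zero, one-pointed stable maps to $Z$ of class $h$, and by the universal property of $\moduli{1}{Z}{h}$ it induces a morphism $\sigma\colon Z\to\moduli{1}{Z}{h}$ with $ev\circ\sigma=\mathrm{id}_Z$; by the description above $\sigma\circ ev$ is the identity on closed points.

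Alternatively, and more cheaply, one can avoid building $\sigma$ by hand: the class $h$ is effective and indecomposable (it is the class of an irreducible rational curve of minimal degree, namely a fiber), so Theorem \ref{modulispace thm} applies with $\beta=h$ and $\moduli{1}{Z}{h}$ is smooth and irreducible; from the relative tangent sequence $0\to T_{Z/Z'}\to T_Z\to\pi^*T_{Z'}\to 0$ one gets $\int_h c_1(T_Z)=\int_h c_1(T_{Z/Z'})=2$, so its expected dimension equals $\dim Z$. Then $ev\colon\moduli{1}{Z}{h}\to Z$ is a bijective morphism of smooth irreducible varieties of the same dimension over $\C$, hence birational, and a birational bijective morphism onto a normal (here smooth) variety is an isomorphism by Zariski's main theorem.

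I expect the only step needing genuine care to be ruling out reducible domains, i.e.\ the stability bookkeeping that forces the domain of a class-$h$ one-pointed stable map to be an irreducible $\P^1$; once that is established, both the explicit-inverse argument and the smoothness-plus-Zariski's-main-theorem argument are routine.
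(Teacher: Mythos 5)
Your proposal is correct, and its closing ``cheaper'' argument is essentially the paper's own proof: the paper proves a slightly more general statement (Corollary \ref{fibercor}: for $\beta$ indecomposable with $Z$ a $\beta$-fibration, $ev$ is an isomorphism) by citing indecomposability for smoothness of $\moduli{1}{Z}{\beta}$, uniqueness of the curve of class $\beta$ through each point for bijectivity, and then concluding ``since both are varieties over $\C$.'' You actually fill in the step the paper elides: a bijective morphism of complex varieties is not automatically an isomorphism (normalization of a cuspidal cubic), so one genuinely needs bijective $\Rightarrow$ generically one-to-one $\Rightarrow$ birational in characteristic zero, and then Zariski's main theorem against the smooth, hence normal, target. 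Your first two paragraphs are a genuine supplement to the paper: the domain analysis (no contracted components, since a contracted leaf of the domain tree would carry at most two special points) justifies the paper's unstated identification of points of $\moduli{1}{Z}{h}$ with pairs (fiber, point on it), and the family $Z\times_{Z'}Z\to Z$ with its diagonal section is an elegant explicit inverse; note these stable maps are automorphism-free, so the coarse space is fine and the universal property applies. Be aware, though, that even the explicit route does not avoid Theorem \ref{modulispace thm}: to upgrade $\sigma\circ ev=\mathrm{id}$ from closed points to an equality of morphisms you need $\moduli{1}{Z}{h}$ reduced, which is exactly what unobstructedness provides (alternatively, $H^1(\P^1,\mathcal{O}(2)\oplus\mathcal{O}^{\oplus(\dim Z-1)})=0$ directly for fiber maps). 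One last pedantic point applying equally to the paper: Theorem \ref{modulispace thm} carries a simply-laced hypothesis, but for $\beta=h$ only case (a) of its proof is needed, so the corollary indeed holds for all types as stated.
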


Corollary \ref{fiber cor} lets us convert intersection theory calculations in
the moduli space, into intersection theory calculations on the Bott-Samelson
variety $Z$.

With the explicit calculations described above, and the vanishing of certain
Gromov-Witten invariants that will be discussed in a few paragraphs, the final
ingredient in obtaining a presentation for the (small) quantum cohomology of
$Z(\alpha_1,\alpha_2,\alpha_1)$ is a brute-force calculation. Curve
neighborhood techniques and the moduli space results allow us to compute some
of the necessary Gromov-Witten invariants to quantize the relations in the
ordinary cohomology. The remaining unknown invariants (of which there are 111),
save for one, can be computed simply by imposing the relations that the quantum
cohomology ring is commutative (this is a system of 192 (generally) nonlinear
equations in 111 unknowns). The final Gromov-Witten invariant is computed using
a technique of Manolache \cite[Section 5.4]{Manolache}.

\begin{mainthm} \label{quantum relations} Let
	$Z=Z(\alpha_1,\alpha_2,\alpha_1)$. The (small) quantum cohomology ring
	$QH^*(Z)$ is isomorphic to a quotient of 
	$\Z[\sigma_{100},\sigma_{010},\sigma_{001},q_1,q_2,q_3]$, subject to the
	following relations: \begin{align*} \sigma_{100}^2 &= q_1q_3 -
		q_3\sigma_{100} + q_3\sigma_{010}\\ \sigma_{010}^2 &= q_1q_3 +
		2q_1\sigma_{100} - q_1\sigma_{010} + q_1\sigma_{001} + \sigma_{110}\\
		\sigma_{001}^2 &= q_1q_3 + q_2 - q_3\sigma_{100} + q_3\sigma_{010} -
	2\sigma_{101} + \sigma_{011} \end{align*} \end{mainthm}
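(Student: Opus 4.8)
The plan is to realize $QH^*(Z)$ as a free $\Z[q_1,q_2,q_3]$-module on the Bott-Samelson subvariety basis $\{\sigma_\varepsilon\}$ and to quantize the three cohomology relations $\sigma_{100}^2,\sigma_{010}^2,\sigma_{001}^2$ by computing their quantum corrections. First I would record the classical cup products in $H^*(Z)$ for $Z=Z(\alpha_1,\alpha_2,\alpha_1)$ (these follow from the $\P^1$-bundle tower structure and the Bott-Samelson basis described in Section~\ref{prelims}); the classical parts of the right-hand sides above, namely $\sigma_{110}$ for $\sigma_{010}^2$ and $\sigma_{011}-2\sigma_{101}$ for $\sigma_{001}^2$, should drop out of this. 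The quantum correction to $\sigma_i\cdot\sigma_j$ in degree $\beta$ is $\sum_{\beta>0}q^\beta\,(\sigma_i\star\sigma_j)_\beta$, where the coefficient of each basis class $\sigma_\varepsilon$ in $(\sigma_i\star\sigma_j)_\beta$ is the three-point invariant $I_\beta(\sigma_i,\sigma_j,\sigma_\varepsilon^\vee)$. By dimension counting (the virtual dimension formula in Theorem~\ref{modulispace thm}), only finitely many classes $\beta$ contribute to each of the three products, so the task reduces to evaluating a finite—but large—list of genus-zero three-point invariants.

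Next I would split those invariants into three tiers according to how they are computed. The fiber classes $h$ of the various $\P^1$-bundles in the tower are handled by Corollary~\ref{fiber cor}: since $ev:\overline{M}_{0,1}(Z,h)\to Z$ is an isomorphism, any invariant in a fiber class becomes an honest intersection number on $Z$, evaluated with the classical ring structure. Invariants in classes that push forward from a proper subvariety (a lower Bott-Samelson variety, or the image $\theta_k(Z)$ in $G/B$) are reduced to computations there via the projection formula and the functoriality of Gromov-Witten classes along the bundle maps $\pi_k$ and $s_k$; the moment-graph description in Theorem~\ref{main theorem} identifies exactly which effective classes these are and pins down the combinatorics of the curve neighborhoods needed to decide which invariants can be nonzero at all. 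The remaining invariants in "genuinely two-dimensional" or higher classes are attacked with the curve-neighborhood technology together with Theorem~\ref{modulispace thm}, which guarantees the relevant moduli spaces are smooth of expected dimension (note $A_2$ is simply laced, so the hypothesis applies), so that the virtual class is the fundamental class and the intersection numbers are geometric.

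The main obstacle is the bookkeeping for the last tier: even after the structural reductions there remain many invariants that no single clean argument resolves, and here I would follow the strategy the introduction flags—impose that $\star$ is commutative and associative on the whole $112$-dimensional family of unknowns, which yields an over-determined nonlinear system ($192$ equations in $111$ unknowns) that cuts down to essentially a single genuinely unknown invariant, and then compute that last one directly via Manolache's degeneration/virtual-localization method \cite[Section 5.4]{Manolache}. Concretely the steps are: (1) fix the basis and classical products; (2) use the dimension formula to list the finitely many $\beta$ contributing to each of $\sigma_{100}^2,\sigma_{010}^2,\sigma_{001}^2$; (3) evaluate the fiber-class invariants via Corollary~\ref{fiber cor}; (4) reduce pushforward-class invariants to lower Bott-Samelson varieties or to $G/B$; (5) use curve neighborhoods and Theorem~\ref{modulispace thm} to handle or constrain the rest; (6) solve the commutativity/associativity system; (7) compute the final invariant à la Manolache; (8) assemble the three displayed relations and check that the quotient ring has the right rank over $\Z[q_1,q_2,q_3]$, confirming these relations are a complete presentation. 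I expect steps (6) and (7) to carry the real weight.
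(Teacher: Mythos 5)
Your proposal follows essentially the same route as the paper: quantize the classical relations, bound the contributing curve classes by the codimension condition and divisor axiom, kill most invariants with curve neighborhoods, compute the fiber-class invariants via the unobstructed moduli spaces of Corollary \ref{fiber cor}, solve the remaining unknowns by imposing commutativity of the Chevalley operators, and finish the single residual invariant with Manolache's projective-bundle technique. The only cosmetic difference is that you propose verifying completeness of the presentation by a rank check, where the paper instead invokes \cite[Proposition 11]{FultonPandharipande}.
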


Under this isomorphism, the generators $\sigma_{100},\sigma_{010},\sigma_{001}$
	are Poincar\'{e} dual to certain Bott-Samelson subvarieties in $Z$. For
	example, $\sigma_{001}$ is dual to the fiber of $\pi:Z\to Z'$. The other
	two classes $\sigma_{100}$, $\sigma_{010}$ arise in a similar way. The
	quantum parameters $q_1,q_2,q_3$ correspond curve classes
	$\beta_1,\beta_2,\beta_3$ which generate the cone of effective
	1-cycles.



The connection between Theorem \ref{main theorem} and quantum cohomology is
	given by curve neighborhoods, which will allow us to show that certain
	Gromov-Witten invariants vanish; curve neighborhoods were used to study the
	quantum cohomology and quantum $K$-theory of homogeneous spaces and ``almost
	homogeneous'' spaces (see \cite{curvenbhds}, \cite{BCMP16}, \cite{Shifler},
	\cite{Mare}). 
	
	Given an effective curve class $\beta$, and a closed subvariety
	$\Omega\subset Z$, the curve neighborhood $\Gamma_{\beta}(\Omega)$ is
	the union of all curves of class $\beta$ which intersect $\Omega$; our
	definition later will be written in terms of the moduli space of stable
	maps, but is equivalent to the one just given. It will be clear from
	the alternative definition that curve neighborhoods are closed.
	Moreover, it is clear if $\Omega$ is $B$-stable, then
	$\Gamma_{\beta}(\Omega)$ is also $B$-stable.  Finding all $B$-stable
	subvarieties in a Bott-Samelson variety is more difficult than finding
	all $B$-stable subvarieties in a homogeneous space. 
However, as we will show in Section \ref{curve nbhds}, the unexpected
	$B$-stable curves collapse under $\theta:Z\to G/B$. So, with some {\it
	ad hoc} arguments, we can use the moment graph to describe curve
	neighborhoods for Bott-Samelson varieties.

\begin{thank} I would like to thank my advisor, Leonardo Mihalcea, for his help
and encouragement throughout this project.  \end{thank}

\section{Preliminaries} \label{prelims}
Our main reference in this section is \cite[Chapter 2]{BrionKumar}. Given a
sequence of simple reflections $\w =
(s_{\beta_1},s_{\beta_2},\ldots,s_{\beta_k})$, consider the space
$P_{\w}:=P_{\beta_1}\times\cdots\times P_{\beta_k}$ equipped with the
$B^k$-action $$(b_1,\ldots,b_k)\odot (p_1,\ldots,p_r) =
(p_1b_1^{-1},b_1^{-1}p_2b_2,\ldots,b_{k-1}^{-1}p_kb_k)$$

\begin{defn} The {\it Bott-Samelson variety} $Z_{\w}$ is the coset space
	$$Z_{\w}:=P_{\w}/B^k$$ The points in $Z_{\w}$ will be denoted by
	$[p_1,\ldots,p_k]$. 

There is a natural $B$-action given by $$b.[p_1,p_2,\ldots,p_k] =
	[bp_1,p_2,\ldots,p_k]$$

$Z_{\w}$ contains an affine open cell $Z_{\w}^{\circ}$ defined by
	$$Z_{\w}^{\circ} := Bs_{\beta_1}B\times\cdots\times Bs_{\beta_k}B/B^k$$

As in the introduction, we index the subwords of $\w$ by binary $k$-tuples
	$\varepsilon\in \{0,1\}^k$.  For example, if $\w=(s_1,s_2,s_1)$ and
	$\varepsilon=(1,1,0)$, then $\w(\varepsilon) = (s_1,s_2)$. For the same
	$\w$, if $\varepsilon = (1,0,1)$, then $\w(\varepsilon) = (s_1,s_1)$. 

For any subword $\w(\varepsilon)$, there is a natural morphism
	$\pi_{\varepsilon}: Z_{\w}\to Z_{\w(\varepsilon)}$; if $\w(\varepsilon)$ is
	the initial subword of length $m$, we will denote $Z_{\w(\varepsilon)}$ by
	$Z_{\w[m]}$ and $\pi_{\varepsilon}$ by $\pi_{m}$.  

The {\it length} of $\varepsilon$, denoted $\ell(\varepsilon)$, is the number
	of ones in $\varepsilon$. If $\ell(\varepsilon)=1$, we will denote
	$\varepsilon=(i)$ where $i$ is the nonzero entry of $\varepsilon$. When
	$\varepsilon,\varepsilon'$ have no common components, we say they are {\it
	transverse}, denoted $\varepsilon\perp\varepsilon'$.

For each word $\w$, the product of the simple reflections (in order) is an
element of the Weyl group which we denote by $w(\w)$; see Definition \ref{weyl
prod}.  \end{defn}

Proofs for all the statements in the following proposition (with one small
exception) can be found in \cite[pp. 64-67]{BrionKumar}.  \begin{prop} Let
	$Z_{\w}$ be a Bott-Samelson variety, $X=G/B$ the flag variety, and let
	$\w(\varepsilon)$ be a subword of $\w$.  \begin{enumerate}[label=(\alph*)]
		\item $Z_{\w}$ is a smooth, projective variety.

\item The natural morphism $\pi:Z_{\w}\to Z_{\w[k-1]}$ defined by
	$$\pi([p_1,\ldots,p_k]) = [p_1,\ldots,p_{k-1}]$$ is $B$-equivariant, and
			realizes $Z_{\w}$ as a $\P^1$-bundle over $Z_{\w[k-1]}$.

\item The map $\theta_{\w}:Z_{\w}\to X$ defined by
	$$\theta_{\w}([p_1,\ldots,p_k]) = (p_1p_2\cdots p_k)B$$ is a $B$-equivariant
			morphism. Moreover, if $s_{\beta_1}s_{\beta_2}\cdots s_{\beta_k} = w(\w)$
			is a reduced word decomposition, then $\theta_{\w}$ is a birational
			equivalence: $Z_{\w}^{\circ}\xrightarrow{\simeq} X(w(\w))^{\circ}$.

\item The map $j_{\varepsilon}:Z_{\w(\varepsilon)}\to Z_{\w}$ defined by
	$$j_{\varepsilon}([p_1,\ldots,p_{\ell}]) =
			[1,1,\ldots,p_1,1,\ldots,p_{\ell},1,\ldots,1]$$ (where ones are placed in
			the components where $\varepsilon$ is zero) is a $B$-equivariant closed
			immersion. 

For the $(k-1)$-initial subword, the morphism will be denoted by
			$s_{\w}:Z_{\w[k-1]}\to Z_{\w}$ and is a section of $\pi_{\w}$.

\item The natural commutative diagram $$\begin{tikzcd} Z_{\w}
\arrow{d}[swap]{\pi_{\w}} \arrow{rr}{\theta_{\w}}	& & X
\arrow{d}{p_{\beta_k}}\\ Z_{\w[k-1]} \arrow{rr}{p_{\beta_k}\theta_{\w[k-1]}} &
& G/P_{\beta_k} \end{tikzcd}$$ is Cartesian; that is, $Z_{\w}$ is the fiber
			product $Z_{\w[k-1]}\times_{G/P_{\beta_k}} X$. The $B$-action on the
	fiber product is diagonal.  \end{enumerate} \end{prop}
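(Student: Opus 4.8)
The goal is to establish the five statements (a)–(e) of the Proposition, but since the author notes these are all proved in Brion–Kumar except for "one small exception," the real work is to identify and prove that exception and to organize citations for the rest. My plan is to treat (a)–(d) by direct reference to \cite[pp.~64--67]{BrionKumar}, and to spend the bulk of the argument on part (e), the Cartesian square, which is the natural candidate for the exception since fiber-product descriptions of Bott–Samelson varieties are not always stated in that precise form.

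First I would set up notation: write $Z_{\w} = P_{\beta_1}\times\cdots\times P_{\beta_k}/B^k$ as in the Definition, and recall that $G/P_{\beta_k}\cong \P^1$ with $p_{\beta_k}\colon G/B\to G/P_{\beta_k}$ the natural $\P^1$-bundle projection. The claim is that the square
\[
\begin{tikzcd}
Z_{\w} \arrow{d}[swap]{\pi_{\w}} \arrow{rr}{\theta_{\w}} & & G/B \arrow{d}{p_{\beta_k}}\\
Z_{\w[k-1]} \arrow{rr}{p_{\beta_k}\theta_{\w[k-1]}} & & G/P_{\beta_k}
\end{tikzcd}
\]
commutes and is Cartesian. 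Commutativity is immediate: $p_{\beta_k}\theta_{\w}([p_1,\ldots,p_k]) = (p_1\cdots p_k)P_{\beta_k} = (p_1\cdots p_{k-1})P_{\beta_k}$ since $p_k\in P_{\beta_k}$, which equals $p_{\beta_k}\theta_{\w[k-1]}(\pi_{\w}([p_1,\ldots,p_k]))$. For the Cartesian property I would construct the comparison map $Z_{\w}\to Z_{\w[k-1]}\times_{G/P_{\beta_k}}G/B$ sending a point to $(\pi_{\w}(\text{pt}),\theta_{\w}(\text{pt}))$ and then exhibit an inverse: given $([p_1,\ldots,p_{k-1}], gB)$ with $(p_1\cdots p_{k-1})P_{\beta_k} = gP_{\beta_k}$, we can write $g = p_1\cdots p_{k-1} p_k$ for some $p_k\in P_{\beta_k}$ (well-defined up to right multiplication by $B$, absorbed by the $B^k$-quotient), giving the point $[p_1,\ldots,p_{k-1},p_k]$. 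I would check this is well-defined under the $B^{k-1}$-action on the first factor and independent of the choices, and that the two constructions are mutually inverse; both maps are evidently $B$-equivariant, which yields the last sentence of (e). Alternatively — and perhaps more cleanly — I would verify the universal property directly: given a scheme $S$ with maps $S\to Z_{\w[k-1]}$ and $S\to G/B$ agreeing in $G/P_{\beta_k}$, produce the unique lift to $Z_{\w}$ by the same pointwise recipe, done functorially.

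For (b), I would note that the $\P^1$-bundle structure of $\pi_{\w}$ can be deduced from (e): since $p_{\beta_k}\colon G/B\to G/P_{\beta_k}$ is a Zariski-locally-trivial $\P^1$-bundle, base change along $p_{\beta_k}\theta_{\w[k-1]}$ preserves this, so $\pi_{\w}$ is a $\P^1$-bundle; $B$-equivariance is clear from the formula. The section $s_{\w}$ and the closed immersions $j_{\varepsilon}$ in (d) are straightforward to check from the explicit formulas — one verifies $j_{\varepsilon}$ is injective with closed image (e.g.\ because it is a composition of sections of the tower of $\P^1$-bundles) and $B$-equivariant, and that $\pi_{\w}\circ s_{\w} = \mathrm{id}$. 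Smoothness and projectivity in (a) follow by induction on $k$ from the $\P^1$-bundle structure, the base case $k=0$ being a point; (c) is the standard statement about $\theta_{\w}$ and reduced words, for which I would cite Brion–Kumar directly.

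The main obstacle is purely organizational rather than deep: pinning down which of (a)–(e) is the "small exception" not covered verbatim in \cite{BrionKumar} and writing a self-contained argument for it. I expect the exception to be the Cartesian-square statement (e) in exactly this formulation, or possibly the precise assertion that the $B$-action on the fiber product is the diagonal one; in either case the proof is the short fiberwise/universal-property check sketched above, and the only mild care needed is bookkeeping with the $B^k$-quotient to see that the choices made (the element $p_k\in P_{\beta_k}$) are immaterial after passing to cosets.
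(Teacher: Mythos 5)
Your proposal is correct and follows essentially the same route as the paper: parts (a)--(d) are cited to Brion--Kumar, and the ``small exception'' is precisely the statement that the $B$-action on the fiber product is diagonal, which you (like the paper) deduce from the $B$-equivariance of $\pi_{\w}$ and $\theta_{\w}$. The only difference is that you write out the Cartesian-square verification in full, whereas the paper delegates it to Exercise 2.2.E.1 of Brion--Kumar; your fiberwise construction of the inverse map is the standard solution to that exercise.
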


\begin{proof} As mentioned before the statement of the proposition, parts
	(a)-(d) are discussed in \cite[pp. 64-67]{BrionKumar}. Part (e) is Exercise
	2.2.E.1 in \cite{BrionKumar}, with the exception of the $B$-action statement.
	This follows easily since each of the maps $\pi_{\w}$ and $\theta_{\w}$ are
$B$-equivariant.  \end{proof}

The cells
$Z_{\varepsilon}^{\circ}:=j_{\varepsilon}(Z(\w(\varepsilon))^{\circ})$ form an
affine cell decomposition of $Z_{\w}$: $$Z_{\w} =
\bigcup_{\varepsilon}Z_{\varepsilon}^{\circ}$$ In particular,
$\{[Z_{\varepsilon}]:\varepsilon\in \{0,1\}^k\}$ is an additive basis for
$H^*(Z_{\w})$; the dual basis (under the Poincare pairing) is denoted
$\{\sigma_{\varepsilon}:\varepsilon\in\{0,1\}^k\}$.

A presentation for the (ordinary) cohomology of Bott-Samelson varieties was
obtained in \cite[Lemma 4.5]{Duan}; we record the result here.  \begin{prop}
	The cohomology of $Z_{\w}$ is generated by
	$\{\sigma_{\varepsilon}:\varepsilon\in\{0,1\}^k\}$ with relations
	\begin{align*} \sigma_{\varepsilon}\sigma_{\varepsilon'} &=
		\sigma_{\varepsilon+\varepsilon'},\quad\quad\text{if }
		\varepsilon\perp\varepsilon'\\ \sigma_{(j)}^2 &= \sum_{i<j}
	-(\alpha_i,\alpha_j^{\vee})\sigma_{(i)+(j)} \end{align*} where
$(\alpha,\beta^{\vee})$ is the usual root/coroot pairing.  \end{prop}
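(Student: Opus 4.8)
The plan is to prove this presentation for $H^*(Z_{\w})$ by induction on the word length $k$, using the $\P^1$-bundle structure $\pi : Z_{\w} \to Z_{\w[k-1]}$ from part (b) of the preceding proposition and the Leray--Hirsch theorem. Write $Z = Z_{\w}$, $Z' = Z_{\w[k-1]}$, with section $s = s_{\w}$. The bundle $\pi$ is the projectivization of a rank-$2$ bundle (it is $Z' \times_{G/P_{\beta_k}} X$ by part (e), a $\P^1$-bundle), so $H^*(Z)$ is a free $H^*(Z')$-module of rank $2$; concretely, $1$ and the class $\sigma_{(k)}$ (dual to the section-type subvariety $Z_{(k)}$, equivalently the pullback of the hyperplane class on the $\P^1$-fibers) form a basis, so every class in $H^*(Z)$ is uniquely $\pi^*a + \pi^*b \cdot \sigma_{(k)}$ with $a,b \in H^*(Z')$. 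The additive basis $\{[Z_{\varepsilon}]\}$ is compatible with this: the $\varepsilon$ with $\varepsilon_k = 0$ give (via $s_*$, which is the inclusion $H^*(Z') \hookrightarrow H^*(Z)$ as noted in the remark after the main theorem) the pulled-back classes, and those with $\varepsilon_k = 1$ give the $\sigma_{(k)}$-multiples. So the first step is to set up this module decomposition and identify the dual basis $\sigma_\varepsilon$ accordingly.

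The second step is to verify the two families of relations. The relation $\sigma_\varepsilon \sigma_{\varepsilon'} = \sigma_{\varepsilon+\varepsilon'}$ for $\varepsilon \perp \varepsilon'$ should follow from the transversality of the corresponding Bott-Samelson subvarieties $j_\varepsilon(Z_{\w(\varepsilon)})$ and $j_{\varepsilon'}(Z_{\w(\varepsilon')})$: when the supports are disjoint the two embedded subvarieties meet transversally in $j_{\varepsilon+\varepsilon'}(Z_{\w(\varepsilon+\varepsilon')})$ (this is essentially a fiber-product statement coming from the tower structure), so the cup product of the Poincar\'e duals is the Poincar\'e dual of the intersection. The key relation $\sigma_{(j)}^2 = \sum_{i<j} -(\alpha_i,\alpha_j^\vee)\sigma_{(i)+(j)}$ is the self-intersection computation for the divisor $Z_{(j)}$; the natural way to get it is to push everything up from $Z_{\w[j]}$ (where $Z_{(j)}$ is a section of a $\P^1$-bundle, hence its self-intersection is computed by the Euler class / first Chern class of the normal bundle of the section) and then express that Chern class in the $\sigma_{(i)}$-basis using the $T$-weight data of the Bott-Samelson construction — the weights of the fiber are governed by $\varepsilon(\alpha_j) = w_j(\varepsilon)\alpha_j$ and the coefficients $-(\alpha_i,\alpha_j^\vee)$ are exactly the structure constants that appear when one expands $s_{\alpha_i}(\alpha_j) = \alpha_j - (\alpha_i,\alpha_j^\vee)\alpha_i$. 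This matches the known formula for the normal bundle of the section of a Bott-Samelson tower.

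Finally I would check that these relations are a \emph{complete} set, i.e. that $H^*(Z)$ is really the quotient of the polynomial ring by this ideal and not something smaller. For this I would run the induction: by the inductive hypothesis $H^*(Z') = \Z[\sigma_\varepsilon : \varepsilon_k = 0]/(\text{relations})$, and the two relations involving $\sigma_{(k)}$ — namely $\sigma_{(k)}\sigma_\varepsilon = \sigma_{(k)+\varepsilon}$ for $\varepsilon$ transverse to $(k)$ (which lets us rewrite any monomial so that $\sigma_{(k)}$ appears to at most the first power times a class pulled back from $Z'$) together with $\sigma_{(k)}^2 = \sum_{i<k} -(\alpha_i,\alpha_k^\vee)\sigma_{(i)+(k)}$ (which reduces $\sigma_{(k)}^2$ to lower $\sigma_{(k)}$-degree) — exhibit the claimed quotient as a free rank-$2$ module over $H^*(Z')$ with basis $1, \sigma_{(k)}$, which by Leray--Hirsch is exactly $H^*(Z)$. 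Rank-counting (the quotient is generated as an $H^*(Z')$-module by $1$ and $\sigma_{(k)}$, and these are independent in $H^*(Z)$) then forces the surjection $\Z[\sigma_\varepsilon]/(\text{relations}) \twoheadrightarrow H^*(Z)$ to be an isomorphism. The main obstacle I anticipate is the self-intersection step: pinning down the normal bundle of the section $s_j : Z_{\w[j-1]} \to Z_{\w[j]}$ precisely enough to read off the coefficients $-(\alpha_i,\alpha_j^\vee)$ requires care with the fiber-product description in part (e) and the $T$-linearization, and getting the signs and the range $i<j$ exactly right is where the real work lies; everything else is bookkeeping with the cell structure and Leray--Hirsch. (Since the paper attributes this to \cite[Lemma 4.5]{Duan}, an alternative is simply to cite it, but the inductive Leray--Hirsch argument above is the natural self-contained route.)
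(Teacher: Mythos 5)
The paper does not actually prove this proposition; it records it as \cite[Lemma 4.5]{Duan} and moves on, so any self-contained argument is necessarily ``a different route.'' Your overall strategy --- induction up the tower via Leray--Hirsch on the $\P^1$-bundle $\pi:Z_{\w}\to Z_{\w[k-1]}$, establishing that $1,\sigma_{(k)}$ form a free $H^*(Z_{\w[k-1]})$-module basis, deriving the quadratic relation from the normal bundle of the section, and then rank-counting to see the relations are complete --- is the standard and correct way to prove Duan's lemma, and the skeleton of your argument is sound.

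There is, however, one genuine gap: your justification of the relation $\sigma_{\varepsilon}\sigma_{\varepsilon'}=\sigma_{\varepsilon+\varepsilon'}$ by ``transversal intersection of the subvarieties $Z_{\varepsilon}$ and $Z_{\varepsilon'}$'' conflates the dual basis with the fundamental classes. The classes $\sigma_{\varepsilon}$ are defined as the basis of $H^*(Z_{\w})$ dual to $\{[Z_{\varepsilon}]\}$ under the Poincar\'e pairing, and they are \emph{not} the Poincar\'e duals of the subvarieties $Z_{\varepsilon}$. Already for $Z(\alpha_1,\alpha_2)\simeq\mathbb{F}_1$ one has $[Z_{10}]=[E]$ (the $(-1)$-curve) but $\sigma_{10}=[F]$ (the fiber class), since $\int E\cdot F=1$ and $\int F\cdot F=0$; so an intersection-theoretic argument about the subvarieties $Z_{\varepsilon}$ proves a statement about the wrong classes. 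The repair stays within your framework: once Leray--Hirsch gives that the square-free monomials $\prod_{j\in\pi_+(\varepsilon)}\sigma_{(j)}$ form an additive basis, you must \emph{compute} $\int_{Z_{\w}}\bigl(\prod_{j\in\pi_+(\varepsilon)}\sigma_{(j)}\bigr)\cdot[Z_{\varepsilon'}]=\delta_{\varepsilon,\varepsilon'}$ by pushing down the tower (using that $\pi_*[Z_{\varepsilon'}]$ is either $[Z'_{\varepsilon'}]$ or $0$ according to the last entry of $\varepsilon'$, as in the paper's pullback remark); that identity is what identifies the monomial basis with the dual basis and hence yields the multiplicativity relation. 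A second, smaller point: in your heuristic for the quadratic relation you wrote $s_{\alpha_i}(\alpha_j)=\alpha_j-(\alpha_i,\alpha_j^{\vee})\alpha_i$, whereas the reflection formula (in the paper's convention, cf.\ the proof of Theorem \ref{main theorem}) reads $s_{\alpha_i}(\alpha_j)=\alpha_j-(\alpha_j,\alpha_i^{\vee})\alpha_i$; the two pairings differ outside simply-laced type, and the coefficient $(\alpha_i,\alpha_j^{\vee})$ in the proposition really arises as the degree of a line bundle $L_{\alpha_i}$ on a curve of class $[X(s_{\alpha_j})]$, not directly from the reflection formula --- so, as you anticipated, that is where the care is needed.
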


\begin{remark} It is known, though the author was unable to find a reference,
	that the dual classes $\sigma_{\varepsilon}$ are preserved under pullback
	along the morphisms $\pi_{\w[r]}: Z_{\w}\to Z_{\w[r]}$ ($1\leq r\leq k-1$).
	We will write formulas such as: $\pi_{\w[r]}^*(\sigma_{\varepsilon}) =
	\sigma_{\varepsilon}$. One should interpret $\varepsilon\in \{0,1\}^r$ as a
	binary $k$-tuple ($k>r$) by appending zeros to the end of $\varepsilon$. 

We include a proof that the dual classes are preserved under pull-back for
	completeness.

\begin{proof} It suffices to show that $\sigma_{(i)}$ is preserved under
	pullback for all $i$. Let $i<k$ and consider $\sigma_{(i)}\in H^*(Z')$.  Then
	$$\int_Z \pi^*\sigma_{(i)}\cdot [Z_{\varepsilon}] = \int_{Z'} \sigma_{(i)}
	\cdot \pi_*[Z_{\varepsilon}].$$ However, $Z_{\varepsilon}$ is either the
	preimage under $\pi$ of a Bott-Samelson subvariety in $Z'$, in which case
	$\pi_*[Z_{\varepsilon}] = 0$, or $Z_{\varepsilon}$ is the image of
	$Z'_{\varepsilon}$ under the canonical section $s:Z'\to Z$ and
$\pi_*[Z_{\varepsilon}] = [Z'_{\varepsilon}]$. Therefore, $\pi^*\sigma_{(i)}$
is Poincar\'{e} dual to $Z_{(i)}$, as claimed.  \end{proof} \end{remark}

To conclude this section, we define the cone of effective curves. 

\begin{defn} The {\it cone of effective curves} in $H_2(Z)$ is the set of all
effective $1$-cycles; that is, a positive combination of the fundamental
classes of irreducible curves in $Z$.  \end{defn}

In \cite[Lemma 2.1]{AndersonBottSamelson}, it is stated that for complete,
	irreducible varieties $X$, the cone of effective $k$-cycles on $X$ is
	generated by the classes of $B$-invariant $k$-cycles. Thus, the cone of
	effective curves for a Bott-Samelson variety is generated by the classes of
	the $B$-stable curves. We characterize those curves in the next
	section. First,	we give a quick proof of \cite[Lemma 2.1]{AndersonBottSamelson}.

\begin{proof}
	Let $d\in N_k(X)_{\R}$ be an irreducible, effective $k$-cycle on $X$,
	and let $C_d^X$ denote the Chow variety for $X$ of degree $d$. The
	$B$-action on $X$ naturally lifts to $C_d^X$, and by the Borel fixed
	point theorem, there is a $B$-fixed point in $C_d^X$. This fixed point
	corresponds to a $B$-invariant $k$-cycle on $X$ with degree $d$.

	Therefore, every $k$-cycle on $X$ can be written as a sum of
	$B$-invariant $k$-cycles.
\end{proof}

\section{The moment graph} \label{moment graph}
Bott-Samelson varieties are sympletic varieties with respect to the given torus
	action, and thus are equipped with a {\it moment map} $Z\to \t^*$; see
	\cite[Section 4.1]{Escobar} for more details on the moment map for
	Bott-Samelson varieties, along with a description of the images of the
	$T$-fixed points under this map. 

The image of the 1-skeleton of $Z$ under this map (that is, the image of the
	$T$-fixed points and $T$-stable curves) is called the {\it moment graph} for
	$Z$. The $T$-fixed points in a Bott-Samelson variety were discussed in
	Section \ref{prelims}, so it remains to describe the $T$-stable curves.

To begin, we characterize the $T$-stable curves on a $\P^1$-bundle over $\P^1$.
	Let $X(T)$ the character group of $T$, and suppose $p:\Sigma\to \P^1$ is a
	$T$-equivariant $\P^1$-bundle. Moreover, we assume the $T$-actions on $\P^1$
	and $\Sigma$ are nontrivial. For $x\in \Sigma^T$, the {\it weights} of
	$\Sigma$ at $x$ are $\chi,\psi\in X(T)$ where \begin{align*} t.v &=
	\chi(t)v,\quad v\in T_{p(x)}\P^1\\ t.w &= \psi(t)w,\quad w\in T_xF
	\end{align*} where $F$ is the (geometric) fiber $p^{-1}(x)$. 

\begin{lemma} \label{weights} There are infinitely many $T$-stable curves
passing through $x\in \Sigma^T$ if and only if the weights at $x$, $\chi$ and
$\psi$, are equal. Otherwise, there are exactly two (irreducible) $T$-stable
curves passing through $x$.  \end{lemma}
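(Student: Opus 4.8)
The plan is to analyze a $T$-equivariant $\P^1$-bundle $p:\Sigma\to\P^1$ locally near the fixed point $x$, using the two weights $\chi$ (tangent to the base) and $\psi$ (tangent to the fiber) to identify all $T$-stable curves through $x$. First I would note that $\Sigma$ has exactly four $T$-fixed points: the fiber $F = p^{-1}(p(x))$ contains $x$ and one other fixed point $x'$ with fiber weight $-\psi$ at $x'$ (since the $T$-action on $\P^1\cong F$ has opposite weights at its two fixed points), and similarly over the other fixed point $q$ of the base $\P^1$ there are two more fixed points. The fiber $F$ itself is always a $T$-stable curve through $x$; the question is what other $T$-stable curves pass through $x$, and these must be sections-like curves that map dominantly (hence isomorphically, being rational of the right degree, or finitely) onto the base $\P^1$, or chains thereof. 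The key local observation is that $T$-stable curves through $x$ other than $F$ correspond to $T$-stable curves in $\Sigma$ that are not contained in a fiber, and near $x$ such a curve is tangent to some $T$-weight direction in the two-dimensional tangent space $T_x\Sigma = T_{p(x)}\P^1 \oplus T_xF$ with weights $\chi$ and $\psi$.

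The main step is the following dichotomy. If $\chi\neq\psi$, then the two weight lines in $T_x\Sigma$ are distinct: they are exactly the tangent lines to $F$ (weight $\psi$) and to the unique $T$-stable section $s$ through $x$ (weight $\chi$). Indeed, choosing a $T$-linearization, the bundle $\Sigma$ restricted to the $T$-fixed affine chart $\mathbb{A}^1\subset\P^1$ containing $p(x)$ is $T$-equivariantly trivial, $\Sigma|_{\mathbb{A}^1}\cong \mathbb{A}^1\times\P^1$, and the two $T$-fixed sections over this chart are the only $T$-stable curves meeting this chart transversally to the fibers; one of them passes through $x$. Globally, a $T$-stable irreducible curve $C\not\subset F$ through $x$ projects onto $\P^1$ with some degree $d\geq 1$; restricting to the affine chart forces $C$ to coincide there with a $T$-fixed section, so $d=1$ and $C$ is exactly that section $s$. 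Thus there are precisely two: $F$ and $s$. Conversely, if $\chi=\psi$, then $T$ acts on the total space of $\Sigma|_{\mathbb{A}^1}\cong\mathbb{A}^1\times\mathbb{A}^1$ (near $x$) with equal weights on both factors, so the whole pencil of lines through $x$ in that $\mathbb{A}^2$ consists of $T$-stable curves, and each extends (or is part of a $T$-stable curve) in $\Sigma$; in bundle terms, $\Sigma\cong\P(\mathcal{O}\oplus\mathcal{O}(d))$ with the $T$-action making a positive-dimensional family of sections $T$-stable through $x$. This gives infinitely many $T$-stable curves through $x$.

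The main obstacle I anticipate is making the "infinitely many" direction precise: one must argue that when the two weights agree, the family of $T$-stable curves through $x$ really is infinite rather than just that the tangent directions are all fixed. The cleanest way is to pin down the bundle type. Since $\Sigma$ is a $\P^1$-bundle over $\P^1$ it is a Hirzebruch surface $\mathbb{F}_d = \P(\mathcal{O}\oplus\mathcal{O}(d))$; the condition $\chi=\psi$ at $x$ should translate (by comparing the $T$-character of the normal bundle of $F$ at $x$, and of a section) into the statement that there is a section $S$ through $x$ with $S\cdot S \geq 0$, i.e. $S$ moves in a linear system of positive dimension all of whose members are $T$-stable and pass through $x$ (when $d=0$, $\Sigma = \P^1\times\P^1$ and the fibers of the other projection work; when $d>0$, the non-negative section moves). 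Conversely $\chi\neq\psi$ forces the only $T$-stable section through $x$ to be the rigid one. I would carry this out by an explicit local coordinate computation on one affine chart of $\mathbb{F}_d$ with an explicit $T$-action, which reduces everything to linear algebra of characters; this is routine once set up, so I would not belabor it. The remaining care is just to record that a $T$-stable curve need not be irreducible, but since we are counting curves through a single point and any reducible $T$-stable curve through $x$ contains an irreducible $T$-stable component through $x$, the count of irreducible ones governs the statement as phrased.
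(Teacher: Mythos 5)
Your proposal reaches the correct dichotomy but by a genuinely more global route than the paper. The paper's proof is entirely local: since $\Sigma$ is smooth and projective, the Bialynicki--Birula linearization theorem (cited in the paper as \cite[Theorem 2.5]{BB}) gives a $T$-stable affine open neighborhood of $x$ that is $T$-isomorphic to the two-dimensional representation $T_x\Sigma$ with weights $\chi,\psi$, and one then reads off the $T$-stable curves through the origin of $\C[X,Y]$ directly --- the pencil $V(X-\alpha Y)$ when $\chi=\psi$, and only the coordinate axes otherwise. You instead classify $\Sigma$ as a Hirzebruch surface, trivialize the bundle equivariantly over an affine chart of the base, and count $T$-stable sections; this buys a global picture (all four fixed points, which sections move, the relation to $S\cdot S\geq 0$), but at the cost of the degree argument and the bundle-type discussion, none of which is needed for a statement about curves through a single point. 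The one substantive caution, which applies to the step you defer to ``routine linear algebra of characters'': the claim that when $\chi\neq\psi$ the only $T$-stable curves through $x$ transverse to the fibers are the $T$-fixed sections (equivalently, in the local model, that only $V(X)$ and $V(Y)$ pass through the origin) requires more than $\chi\neq\psi$. If, say, $\psi=2\chi$ as characters, then every $V(Y-cX^2)$ is a $T$-stable curve through $x$, so the count of two would fail. The paper's proof leans on the same point (it asserts distinct weights are linearly independent). This is harmless in the application because the weights occurring on Bott--Samelson varieties are roots of a reduced root system, so distinct weights are either linearly independent or negatives of one another, and in the latter case the extra invariant curves $V(XY-c)$ miss $x$; but if you carry out your chart computation you should make this hypothesis on the characters explicit rather than absorb it into ``routine.''
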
 \begin{proof} Since $\Sigma$ is smooth
	and projective, there is a $T$-stable affine open neighborhood of $x$ which
	is $T$-isomorphic to $T_x\Sigma$ (\cite[Theorem 2.5]{BB}.) Choose local
	coordinates $X,Y$ so that $\C[T_x\Sigma]\simeq \C[X,Y]$ with $t.X=\chi(t)X$
	and $t.Y=\psi(t)Y$. 

If $\chi=\psi$, then it is clear that the lines $V(X-\alpha Y)\subset
	T_x\Sigma$ ($\alpha\in \C$) are $T$-stable curves; in particular, there are
	infinitely many $T$-stable curves in $\Sigma$ passing through $x$.

If $\chi\neq \psi$, the characters are linearly independent. Therefore, the
only $T$-stable curves passing through $x$ are (in local coordinates)
$V(X),V(Y)$.  \end{proof}


If there is a $T$-fixed point $x\in \Sigma$ with repeated weights, it is also
easy to show there are exactly two $T$-fixed points with repeated weights.
Moreover, there are additional $T$-stable curves in $\Sigma$.

\begin{lemma} \label{sections} Let $\Sigma = \P^1\times \P^1$ (where each
	$\P^1$ is equipped with a nontrivial $T$-action). If the $T$-fixed points in
	$\Sigma$ all have distinct weights, then the $T$-stable fibers of the two
	projections are the only $T$-stable curves.  Otherwise, the images of the
sections $s_t: \P^1\to \Sigma$ ($t\in T$), defined by $s_t(z)=(z,t.z)$, are
also $T$-stable, and these exhaust the set of $T$-stable curves in $\Sigma$.
\end{lemma}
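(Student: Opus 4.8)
The plan is to reduce to the local analysis of Lemma \ref{weights} by computing the weights of the torus action at each of the four $T$-fixed points of $\Sigma = \P^1\times\P^1$, and then to identify explicitly the extra $T$-stable curves that appear in the degenerate case. Let the two $\P^1$ factors carry nontrivial $T$-actions with weights $\pm a$ and $\pm b$ respectively, where $a,b\in X(T)$ are the weights at the ``zero'' ends. The four fixed points are $(0,0),(0,\infty),(\infty,0),(\infty,\infty)$, and the pair of tangent weights at these points are $(a,b),(a,-b),(-a,b),(-a,-b)$. By Lemma \ref{weights} applied to either projection $\Sigma\to\P^1$ (both are $T$-equivariant $\P^1$-bundles, and the hypothesis guarantees the actions are nontrivial on base and total space unless $a$ or $b$ is torsion, a case I would note separately or exclude by the standing nontriviality assumption), a fixed point has infinitely many $T$-stable curves through it precisely when its two weights coincide. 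First I would observe that no fixed point can have a repeated weight unless $a = b$ or $a = -b$ — i.e. $a = \pm b$ — since the four weight pairs above are exactly $\{(\pm a,\pm b)\}$; and in that degenerate case exactly two of the four fixed points (those where the signs match up) have repeated weights, which establishes the parenthetical count asserted just before the lemma.

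For the first (generic) assertion, I would argue as follows: if all four fixed points have distinct weights, then by Lemma \ref{weights} each fixed point lies on exactly two irreducible $T$-stable curves. The two $T$-stable fibers of each projection already account for two $T$-stable curves through each fixed point (through $(0,0)$, say, the fibers $\{0\}\times\P^1$ and $\P^1\times\{0\}$), so there is no room for any further irreducible $T$-stable curve. Hence the $2+2$ fibers of the two projections exhaust all irreducible $T$-stable curves, and any $T$-stable curve, being a union of irreducible ones, is a union of these fibers.

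For the degenerate assertion, suppose WLOG (after the automorphism $z\mapsto 1/z$ on one factor, which changes $b$ to $-b$) that $a = b$. Then I would check directly that each graph $s_t(z) = (z, t.z)$ for $t\in T$ is $T$-stable: writing the $T$-action on the second factor in the coordinate where multiplication by $t$ makes sense, $t'.(z,t.z) = (t'.z,\; t'.(t.z)) = (t'.z,\; t.(t'.z))$, which again lies in $s_t(\P^1)$; this uses that the two factors have the same weight so that the two $T$-actions commute appropriately with the ``diagonal'' identification. Conversely, I would show these graphs plus the fibers exhaust the irreducible $T$-stable curves: the local model at $(0,0)$ has coordinates $X,Y$ with equal weight $a$, so by the proof of Lemma \ref{weights} every $T$-stable curve germ through $(0,0)$ is a line $V(X - \lambda Y)$; the value $\lambda = 0$ and $\lambda = \infty$ give the two fibers, and each other $\lambda$ extends (by $T$-stability and projectivity) to a unique global $T$-stable curve, which one identifies with one of the graphs $s_t$ by matching tangent directions at $(0,0)$. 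A short bijection argument between the nonzero/noninfinite $\lambda$'s and the elements of $T$ acting effectively on the second factor finishes it; the only subtlety is that $T$ may act on the $\P^1$ through a quotient, so ``$t\in T$'' should really be read as parametrizing the one-dimensional family, and I would phrase the statement so that $t$ ranges over the image of $T$ in $\mathrm{PGL}_2$.

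The main obstacle I anticipate is not any single computation but rather bookkeeping the degenerate case cleanly: making precise the claim that when $a = \pm b$ the two commuting $\P^1$-actions admit a one-parameter family of diagonal sections, and checking that these sections together with the four fibers really are \emph{all} the $T$-stable curves — in particular that no reducible configuration or non-reduced structure sneaks in. This is handled by the local linearization from \cite[Theorem 2.5]{BB} used in the proof of Lemma \ref{weights}: every $T$-stable curve through a degenerate fixed point is locally a line in the weight-$a$ plane, these lines glue to global curves, and counting them against the explicit family $s_t$ shows the list is complete. The case $a = -b$ reduces to $a = b$ by the coordinate flip on one factor, so only one degenerate case needs genuine work.
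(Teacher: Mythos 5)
Your proposal is correct and follows the same skeleton as the paper's proof: both reduce the generic case to Lemma \ref{weights} (distinct tangent weights at every fixed point force exactly two irreducible $T$-stable curves through each, already accounted for by the fibers), and both verify $T$-stability of the graphs $s_t$ by the identical commutation computation $t'.(z,t.z)=(t'.z,t.(t'.z))$. Where you genuinely diverge is the exhaustion step in the degenerate case. The paper argues globally: any irreducible $T$-stable curve $C$ other than a fiber meets some section $s_t(\P^1)$ in a point not fixed by $T$, hence $C$ and $s_t(\P^1)$ contain the closure of that point's dense orbit and must coincide. You instead argue locally at a degenerate fixed point, using the linearization of \cite[Theorem 2.5]{BB} to see that every $T$-stable curve germ there is a line $V(X-\lambda Y)$, and then match tangent directions against the family $s_t$. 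Both work; the paper's version is shorter and sidesteps the bookkeeping you worry about at the end, while yours makes the weight computation $(\pm a,\pm b)$ at all four fixed points explicit (the paper leaves this implicit) and pins down the parametrization of the one-dimensional family. Two small points you should state to close your version: (i) every complete irreducible $T$-stable curve contains a $T$-fixed point, and by Lemma \ref{weights} a non-fiber curve must pass through one of the two fixed points with repeated weights, so your local analysis at $(0,0)$ together with its mirror at $(\infty,\infty)$ really does see every candidate curve; and (ii) the proof of Lemma \ref{weights} only exhibits the lines $V(X-\lambda Y)$ as $T$-stable in the equal-weight case, so you need the easy converse that an irreducible $T$-stable curve in the linearized chart is the closure of a single punctured-line orbit and hence is one of these lines.
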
 \begin{proof} By the previous lemma, the only case that requires
	analysis is when there are repeated weights in $T_x\Sigma$ for some $x\in
	\Sigma^T$; fix such a point $x\in \Sigma$. Since $T_x\Sigma$ has equal
	weights, the $T$-actions on each factor $\Sigma=\P^1\times \P^1$ are equal.
	Thus, \begin{align*} s_t(t'.z) = (t'.z,t.(t'.z)) &= (t'.z,t'.(t.z))\\ &=
	t'.(z,t.z) \end{align*} that is, $s_t$ is a $T$-equivariant section of
	$p_1:\Sigma\to \P^1$.  

These curves exhaust the set of (irreducible) $T$-stable curves in $\Sigma$
	since any other $T$-stable curve $C$, aside from the $T$-stable fibers,
	intersects one of the sections $s_t$ in a point not fixed by $T$. Therefore,
	$C$ shares a dense open orbit with the section $s_t$ and so is equal to the
image $s_t(\P^1)$.  \end{proof}

\begin{example} In \cite[Section 2.1]{Magyar}, a configuration variety
	interpretation of Bott-Samelson varieties is provided in type $A$.  For the
	Bott-Samelson variety $Z(\alpha_1,\alpha_2,\alpha_1)$, the points correspond
	to configuration diagrams \begin{center} \begin{tikzpicture}
\node (top) at (0,1) {$\C^3$}; \node[below left = of top] (onetwo) {$\C^2$};
		\node[below right = of top] (twothree) {$V_{23}$}; \node[below = of onetwo]
		(one) {$\C^1$}; \node[below = of twothree] (three) {$V_3$}; \node[below
		right = of onetwo] (two) {$V_2$}; \node[below = of two] (zero) {$(0)$};
\draw (top) -- (onetwo); \draw (top) -- (twothree); \draw (onetwo) -- (two);
	\draw (twothree) -- (two); \draw (onetwo) -- (one); \draw (twothree) --
	(three); \draw (one) -- (zero); \draw (two) -- (zero); \draw (three) --
	(zero); \end{tikzpicture} \end{center} where $V_2,V_3, V_{23}$ are vector
	subspaces of $\C^3$, $\dim V_2 = \dim V_3 = 1$ and $\dim V_{23} = 2$, with a
	line between two subspaces meaning inclusion (so, $V_2\subset V_{23}$ and
	$V_2\subset \C^2$ in the standard basis). 

The sub Bott-Samelson variety $Z_{101}$  (note this subword is not reduced) is
	characterized by $V_{23}=\C^2$, so the configuration diagram is
	\begin{center} \begin{tikzpicture}
\node (top) at (0,1) {$\C^3$}; \node[below left = of top] (onetwo) {$\C^2$};
		\node[below right = of top] (twothree) {$\C^2$}; \node[below = of onetwo]
		(one) {$\C^1$}; \node[below = of twothree] (three) {$V_3$}; \node[below
		right = of onetwo] (two) {$V_2$}; \node[below = of two] (zero) {$(0)$};
\draw (top) -- (onetwo); \draw (top) -- (twothree); \draw (onetwo) -- (two);
	\draw (twothree) -- (two); \draw (onetwo) -- (one); \draw (twothree) --
	(three); \draw (one) -- (zero); \draw (two) -- (zero); \draw (three) --
	(zero); \end{tikzpicture} \end{center} Thus $Z_{101}\simeq \P^1\times \P^1$;
	this isomorphism is the natural morphism $$\begin{tikzcd} Z_{101}
	\arrow{ddr}[swap]{\pi} \arrow{drr}{\theta} \arrow[dashed]{dr}\\ & \P^1\times
	\P^1 \arrow{r} \arrow{d} & X(s_1)\\ & Z(s_1)\simeq X(s_1) \end{tikzcd}$$ and
	is $T$-equivariant for the diagonal $T$-action on $X(s_1)\times X(s_1)$.  The
	moment graph for $Z_{101}$ is \begin{center}
		\begin{tikzpicture}[roundnode/.style={circle, draw=gray!60, fill=gray!5,
			very thick, minimum size=7mm}, ]
\node[roundnode]	(basepoint) {00}; \node[roundnode]	(upper) [above=of
			basepoint]		{01}; \node[roundnode] 	(right) [right=of basepoint]
			{11}; \node[roundnode] (upright) [above=of right]			{10};

\draw (upper.south) -- (basepoint.north); \draw (upper.east) -- (upright.west);
		\draw (basepoint.east) -- (right.west); \draw (upright.south) --
		(right.north); \draw[ultra thick] (basepoint.north east) -- (upright.south
		west); \end{tikzpicture} \end{center} where the bold line is an infinite
	family of $T$-stable curves (given by the images of the morphisms $s_t$).
	\end{example}

Before proving our main theorem, we prove a critical lemma.  \begin{lemma}
\label{isomorphism} Let $C\subset Z$ be an irreducible $T$-stable curve.
\begin{enumerate}[label=(\alph*)] \item If $C$ is not contained in a fiber of
			$\theta:Z\to G/B$, then $\theta|_C$ is an isomorphism.  \item If $C$ is
not a fiber of $\pi:Z\to Z'$, then $\pi|_C$ is an isomorphism.  \end{enumerate}
In particular, $C\simeq \P^1$.  \end{lemma}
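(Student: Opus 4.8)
The plan is to argue by induction on the length $n$ of the word $\w$, using at each stage the Cartesian presentation $Z=Z'\times_{G/P}(G/B)$, where $P$ is the minimal parabolic attached to the last simple reflection of $\w$. I will use two consequences of this presentation: the map $(\pi,\theta)\colon Z\to Z'\times(G/B)$ is a closed immersion, and for each $z'\in Z'$ the map $\theta$ carries the fiber $\pi^{-1}(z')$ isomorphically onto the fiber of the $\P^1$-bundle $G/B\to G/P$ over $\phi(z')$, where $\phi\colon Z'\to G/P$ is the composite of $\theta'$ with $G/B\to G/P$. The base case $n\le 1$ is immediate: $Z$ is a point or $Z\simeq\P^1$.

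For the inductive step, let $C\subset Z$ be an irreducible $T$-stable curve. If $C$ is a fiber of $\pi$, then by the second consequence above $\theta|_C$ is an isomorphism onto a fiber of $G/B\to G/P$, hence $C\simeq\P^1$, and (b) is vacuous. So assume $C$ is not a fiber of $\pi$. Then $\pi|_C$ is nonconstant — otherwise $C$ would be an irreducible curve inside some fiber $\pi^{-1}(z')\simeq\P^1$, forcing $C=\pi^{-1}(z')$ — so $C':=\pi(C)$ is an irreducible $T$-stable curve in $Z'$, and by the inductive hypothesis (applied to $Z'$ and $C'$) $C'\simeq\P^1$.

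Now distinguish two sub-cases. If $C$ is contained in a fiber $\theta^{-1}(gB)$, then the Cartesian presentation identifies $\theta^{-1}(gB)$ with the subvariety $\phi^{-1}(gP)\subseteq Z'$ via $\pi$, so $\pi|_C$ is the restriction of an isomorphism: this proves (b), and $C\simeq C'\simeq\P^1$, with (a) vacuous. Otherwise $\theta|_C$ is nonconstant, so $D:=\theta(C)$ is an irreducible $T$-stable curve in $G/B$; here I invoke the classical fact that every $T$-stable curve in $G/B$ is isomorphic to $\P^1$. Since $(\pi,\theta)$ is a closed immersion with $\pi(C)=C'$ and $\theta(C)=D$, the pair $(\pi|_C,\theta|_C)$ realizes $C$ as a closed $T$-stable curve in $C'\times D\simeq\P^1\times\P^1$ dominating each factor, and both factors carry nontrivial $T$-actions since neither $C'$ nor $D$ can lie in the finite fixed locus of $Z'$ resp. $G/B$. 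By Lemma \ref{sections}, the only $T$-stable curves in $\P^1\times\P^1$ are the fibers of the two projections and the graphs $s_t$; as $C$ is not a fiber of either projection, $C$ is some $s_t$, which has degree one over each factor. Hence $\pi|_C$ and $\theta|_C$ are degree-one finite morphisms onto the smooth rational curves $C'$ and $D$, so they are isomorphisms, and $C\simeq\P^1$. This completes the induction.

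The crux of the argument — and the only place that needs real input — is ruling out that $\pi|_C$ or $\theta|_C$ is a branched multiple cover of its image rather than an isomorphism; in the abstract a $T$-equivariant map $\P^1\to\P^1$ can have any degree, so this cannot be dispatched by formal nonsense. The device that does it is to use $\pi$ and $\theta$ simultaneously to confine $C$ to a product of two rational curves and then appeal to the classification of $T$-stable curves in $\P^1\times\P^1$ (Lemma \ref{sections}), which forces bidegree $(1,1)$. A secondary point to handle with care is that the three cases above are exhaustive and consistent: one checks from the Cartesian presentation that a fiber of $\pi$ is never contained in a fiber of $\theta$ (it maps isomorphically onto a fiber of $G/B\to G/P$), so ``$C$ a fiber of $\pi$'', ``$C$ inside a fiber of $\theta$'', and ``$C$ in neither'' genuinely cover all cases.
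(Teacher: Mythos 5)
Your proof is correct, and its skeleton --- induction on the length of the word, the Cartesian presentation $Z=Z'\times_{G/P_{\beta_k}}(G/B)$, and Lemma \ref{sections} as the device that rules out multiple covers --- is the same as the paper's. The execution differs in a way worth noting. The paper works inside the pulled-back ruled surface $\pi^{-1}(C')=C'\times_{G/P_{\beta_k}}(G/B)$ and splits cases according to whether the composite $p_{\beta_k}\theta'|_{C'}$ is an isomorphism or constant: in the first case $\theta$ embeds the entire surface $\pi^{-1}(C')$ into $G/B$, so $\theta|_C$ is an isomorphism onto its image with no further argument; only in the second case, where $\pi^{-1}(C')\simeq C'\times\P^1$, does Lemma \ref{sections} enter. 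You instead split on whether $\theta|_C$ is constant, and in the nonconstant case you confine $C$ to the product $C'\times\theta(C)$ via the closed immersion $(\pi,\theta)\colon Z\hookrightarrow Z'\times(G/B)$, applying Lemma \ref{sections} uniformly there. This buys a cleaner, single appeal to the quadric classification, but at the price of one extra input the paper never needs: the classical fact that irreducible $T$-stable curves in $G/B$ are rational, which you must know in order to identify $C'\times\theta(C)$ with $\P^1\times\P^1$. (That fact is standard --- the $T$-stable curves in $G/B$ are the translates $\overline{U_\beta\, uB/B}$ --- but you should cite it; the paper's case split avoids it because the second factor of its product is always literally a fiber of $G/B\to G/P_{\beta_k}$.) Your explicit check that the three cases are exhaustive and that a fiber of $\pi$ is never contained in a fiber of $\theta$ is a point the paper leaves implicit, and is welcome.
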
 \begin{proof} We'll prove (a); the
	proof of (b) is similar. First, observe that $\theta|_C$ is an isomorphism if
	$C$ is a fiber of $\pi$, thus we may assume $C$ is not a fiber of $\pi$; let
	$C'=\pi(C)$. By induction on $\dim Z$, $\theta'|_{C'}$ is an isomorphism.
	There are two possibilities: \begin{enumerate} \item
				$(p_{\beta_k}\theta')|_{C'}$ is an isomorphism. In this case, the
				preimage $\pi^{-1}(C')$ is isomorphic (under $\theta$) to a surface in
				$G/B$.  In particular, $\theta|_C$ is an isomorphism.  \item
					$(p_{\beta_k}\theta')(C')$ is a $T$-fixed point in $G/P_{\beta_k}$.
					In this case, the preimage $\pi^{-1}(C')$ is $T$-equivariantly
					isomorphic to $\P^1\times \P^1$ with the diagonal $T$-action (coming
	from $G/B$). Hence, $C$ is a section of $\theta$ over $X(s_{\beta_k})$ by
	Lemma \ref{sections}. In particular, $\theta|_C$ is an isomorphism.
	\end{enumerate}

From (b), any $T$-stable curve $C$ is isomorphic to a fiber, and hence is
isomorphic to $\P^1$.  \end{proof}

\begin{lemma} \label{curve weight} Let $C\subset
Z(\alpha_1,\alpha_2,\ldots,\alpha_n)$ be a $T$-stable curve, and $k$ the
largest integer so that $\dim \pi_k(C) = 0$. Then, if $x\in C^T$, the weight at
$x$ is $\varepsilon_x(\alpha_k)$.  \end{lemma}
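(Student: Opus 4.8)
The plan is to induct on $n$ and track the weight of $C$ at a fixed point $x$ through the $\mathbb{P}^1$-bundle structure $\pi = \pi_n : Z \to Z'$. There are two cases, governed by whether $C$ lies in a fiber of $\pi$ or not. First suppose $C$ is a $T$-stable fiber of $\pi$. Then $k = n$, and the weight of $Z$ at $x$ in the fiber direction $T_x F$ is exactly the weight of the $\mathbb{P}^1$-bundle; via the Cartesian square relating $\pi$ to $p_{\beta_n}:G/B \to G/P_{\beta_n}$, this fiber is identified $T$-equivariantly with a $T$-stable $\mathbb{P}^1$ in $G/B$ through the point $\theta(x) = w(\varepsilon_x)B$, namely the Schubert curve in the $\alpha_n$-direction. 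Its weight at that point is the root $w(\varepsilon_x)\alpha_n = w_n(\varepsilon_x)\alpha_n = \varepsilon_x(\alpha_n)$, using that the $n$-th entry of $\varepsilon_x$ does not affect $w_n(\varepsilon_x)$ when computing the weight (one must be slightly careful here about the two fixed points on the fiber, $\varepsilon_x^0$ and $\varepsilon_x^\infty$, whose weights differ by a sign, but both are $\pm\varepsilon_x(\alpha_n)$ up to the sign convention, and $\varepsilon(\alpha_k)$ is only defined up to the ambiguity of which fixed point we sit at). This establishes the base of the induction and the case $k=n$.

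Now suppose $k < n$, so $C$ is not a fiber of $\pi$. By Lemma \ref{isomorphism}(b), $\pi|_C : C \to C' := \pi(C)$ is an isomorphism, so $C'$ is an irreducible $T$-stable curve in $Z'$ with the same largest index $k$ satisfying $\dim\pi_k(C') = 0$. By the inductive hypothesis applied to $Z'$, the weight of $C'$ at $\pi(x)$ is $\varepsilon_{\pi(x)}(\alpha_k)$; and since $\varepsilon_x$ agrees with $\varepsilon_{\pi(x)}$ in its first $n-1$ entries and $k < n$, we have $w_k(\varepsilon_x) = w_k(\varepsilon_{\pi(x)})$, hence $\varepsilon_{\pi(x)}(\alpha_k) = \varepsilon_x(\alpha_k)$. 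It remains to show that the isomorphism $\pi|_C$ carries the weight $\varepsilon_x(\alpha_k)$ at $\pi(x)$ up to the weight $\varepsilon_x(\alpha_k)$ at $x$, i.e. that $\pi|_C$ is weight-preserving on tangent spaces at the fixed point. This is automatic: $d\pi_x : T_xC \to T_{\pi(x)}C'$ is a $T$-equivariant isomorphism of one-dimensional $T$-representations, so it is an isomorphism of $T$-modules and the weights coincide.

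The main obstacle — and the step deserving the most care — is the base case and the identification of fiber weights inside $G/B$: one must pin down precisely which $T$-stable $\mathbb{P}^1$ the fiber $F = \pi^{-1}(\pi(x))$ maps to under $\theta$, and compute its weight at $\theta(x)$. The key input is part (e) of the Proposition: $Z$ is the fiber product $Z' \times_{G/P_{\beta_n}} (G/B)$ with diagonal $T$-action, so $F$ is $T$-equivariantly isomorphic to the fiber of $p_{\beta_n}: G/B \to G/P_{\beta_n}$ over $p_{\beta_n}(\theta'(\pi(x)))$. That fiber is the translate $w(\varepsilon_{\pi(x)})\,\overline{B s_{\beta_n} B/B}$, a Schubert curve whose tangent weight at the fixed point $w(\varepsilon_{\pi(x)})B$ is $w(\varepsilon_{\pi(x)})(-\alpha_n)$ (the standard computation of $T$-weights on Schubert curves in $G/B$), which up to sign is $\varepsilon_x(\alpha_n)$ as required. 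I would state the sign convention for $\varepsilon(\alpha_k)$ explicitly at this point so that "the weight is $\varepsilon_x(\alpha_k)$" is unambiguous, choosing the fixed point $x$ to be $\varepsilon_x^{\infty}$ (the point not on the section) so that the weight is genuinely $w_k(\varepsilon_x)\alpha_k$ rather than its negative; the other fixed point then has weight $-\varepsilon_x(\alpha_k) = \varepsilon_x^0(\alpha_k)$, consistent with Definition \ref{weyl prod} applied to $\varepsilon_x^0$.
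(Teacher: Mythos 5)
Your proof is correct and follows essentially the same route as the paper's (very terse) argument: the fiber case is handled by identifying the fiber with a Schubert curve in $G/B$ via $\theta$ (equivalently, via the fiber-product description in part (e) of the Proposition), and the non-fiber case reduces to $Z'$ by induction using Lemma \ref{isomorphism}(b) and the $T$-equivariance of $d\pi_x$. Your additional care with the sign ambiguity at the two fixed points of the fiber is a worthwhile clarification but not a departure from the paper's approach.
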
 \begin{proof} If $C$ is a fiber
of $\pi$, then the result follows since $\theta|_C$ is an isomorphism.
Otherwise, by Lemma \ref{isomorphism}, $\pi|_C$ is an isomorphism and the
result follows by induction on $n=\dim Z$.  \end{proof}

We are now ready to prove our main theorem, Theorem \ref{main theorem}, stated
in the introduction.

\begin{proof}[Proof of Theorem \ref{main theorem}.] For clarity, we will
	describe the moment graph pictures as Diagrams I-IV, and the
	characterizations in terms of $\varepsilon(\alpha_k)$ as Cases I-IV.

We only consider the case where $\pi(x),\pi(y)$ are joined by a $T$-stable
	curve $C$, since the other case is obvious. Let $\Sigma$ be defined as the
	pull-back $\Sigma = \pi^{-1}(C)$ as in the fiber diagram $$\begin{tikzcd}
		\Sigma \arrow[hook]{r} \arrow{d}[swap]{\pi|_{\Sigma}} & Z \arrow{d}{\pi}\\
	C \arrow[hook]{r} & Z' \end{tikzcd}$$ In particular, $\Sigma$ is the fiber
	product $$\begin{tikzcd} \Sigma \arrow{r}{\theta|_{\Sigma}}
		\arrow{d}[swap]{\pi|_{\Sigma}}	& G/B \arrow{d}{p_{\alpha_n}}\\ C
	\arrow{r}[swap]{p_{\alpha_n}\theta'|_C} & G/P_{\alpha_n} \end{tikzcd}$$

Lemma \ref{isomorphism} shows $p_{\alpha_n}\theta'|_C$ is either an isomorphism
	or a point map. If an isomorphism,  then $\Sigma$ is isomorphic (under
	$\theta$) to a surface in $G/B$. In particular, the moment graph restricted
	to $\Sigma$ is diagram I. 

Since the only curve whose homology class is not clear is the curve joining
	$\varepsilon_x^{\infty},\varepsilon_y^{\infty}$, we compute the homology
	class of that curve only. From Lemma \ref{curve weight}, the weight along $C$
	at $\pi(x)$ is $\varepsilon_{\pi(x)}(\alpha_k)$, and the weight at $\pi(y)$
	is $\varepsilon_{\pi(y)}(\alpha_k)$. Since $\theta|_{s(C)}=\theta'|_C$ is an
	isomorphism, the weights are preserved, and since $\theta(C)$ Is a translate
	of $X(s_{\alpha_k})$, we have $(\theta s)_*[C] = [X(s_{\alpha_k})]$. In
	particular, $w(\varepsilon_x^0)=w(\varepsilon_y^0)s_{\alpha_k}$. Since
	$p_{\alpha_n}\theta'|_C$ is an isomorphism, $\alpha_k\neq \alpha_n$, so case
	I holds.

We can then compute the relation between
	$w(\varepsilon_x^{\infty}),w(\varepsilon_y^{\infty})$: \begin{align*}
		w(\varepsilon_x^{\infty}) = w(\varepsilon_x^0)s_{\alpha_n} &=
		(w(\varepsilon_y^0)s_{\alpha_k})s_{\alpha_n}\\ &=
		(w(\varepsilon_y^0)s_{\alpha_n})s_{\beta} =
	w(\varepsilon_y^{\infty})s_{\beta} \end{align*} where $\beta =
	s_{\alpha_n}(\alpha_k) = \alpha_k - (\alpha_k,\alpha_n^{\vee})\alpha_n$.
	Thus, the degree of the image of the curve joining
	$\varepsilon_x^{\infty},\varepsilon_y^{\infty}$ is $[X(s_{\alpha_k})] -
	(\alpha_k,\alpha_n^{\vee})[X(s_{\alpha_n})]$.  Note, that since $\alpha_k\neq
	\alpha_n$, $(\alpha_k,\alpha_n^{\vee})\leq 0$. 

Moreover, the curve joining $\varepsilon_x^{\infty},\varepsilon_y^{\infty}$ has
	homology class $s_*[C] + ah$ for some constant $a$. Equating the push-forward
	calculations, we obtain $a=-(\alpha_k,\alpha_n^{\vee})$.  Therefore, the
	homology class of the curve joining
	$\varepsilon_x^{\infty},\varepsilon_y^{\infty}$ is $s_*[C] -
	(\alpha_k,\alpha_n^{\vee}) h.$

We now consider the case where $p_{\alpha_n}\theta'|_C$ is a point map.  In
	this case, $\Sigma$ is the trivial $\P^1$-bundle $\Sigma = C\times
	X(s_{\alpha_n})$. According to Lemma \ref{isomorphism}, there are cases
	corresponding to whether all $T$-fixed points in $\Sigma$ have distinct
	weights or not. 

As above, the weights along $C$ at $\varepsilon_x^0,\varepsilon_y^0$ are
	$\varepsilon_x^0(\alpha_k),\varepsilon_y^0(\alpha_k)$ respectively.  Thus,
	there are repeated weights in cases II, III, and IV. Indeed, there can only
	be exactly two fixed points with repeated weights, and they cannot be
	$\varepsilon_x^{\infty},\varepsilon_y^{\infty}$, otherwise there could not be
	a $T$-stable curve joining $\varepsilon_x^0,\varepsilon_y^0$. By Lemma
	\ref{sections}, there are infinitely many $T$-stable curves joining the two
	fixed points with repeated weights, hence we get the moment graph pictures
	which correspond to diagrams II, III, and IV. 

If all fixed points have distinct weights, we get the moment graph in diagram
I, and the curve joining $\varepsilon_x^{\infty},\varepsilon_y^{\infty}$ is a
fiber of the second projection. Therefore, if $w(\varepsilon_x^0) =
w(\varepsilon_y^0)$ and case I holds, the homology class of the unlabeled curve
is $s_*[C]$. 

If $\varepsilon_x^0,\varepsilon_y^0$ are the fixed points with repeated
weights, then the moment graph is given by diagram II. Moreover,
$w(\varepsilon_x^0)=w(\varepsilon_y^0)s_{\alpha_n}$ so case II holds.  From
this, $w(\varepsilon_x^0)=w(\varepsilon_y^{\infty})$ and vice-versa.
Therefore, the homology class of the diagonal curves is $s_*[C] - h$. 

If $w(\varepsilon_x^0)=w(\varepsilon_y^0)$, and there are two points with
repeated weights, they must be diagonally adjacent since both
$\varepsilon_x^0,\varepsilon_y^0$ cannot have repeated weights, in particular
cases III and IV hold. Diagrams III and IV are the cases where
$\varepsilon_x^0$ and $\varepsilon_y^0$ respectively have repeated weights. In
both cases, the diagonal family of curves have pushforward $[X(s_{\alpha_n})]$,
while $(\theta s)_*[C] = 0$. Therefore, the homology class of the diagonal
curves is $s_*[C]+h$.  \end{proof}





\begin{example} Consider the Bott-Samelson variety
	$Z=Z(\alpha_1,\alpha_2,\alpha_1)$ in type $A_2$. As stated in Section
	\ref{prelims}, the cone of effective 1-cycles is generated by the
	$B$-stable 1-cycles (see \cite[Lemma 2.1]{AndersonBottSamelson}.) A
	basis for the cone of effective 1-cycles is given by
$\beta_1=[Z_{010}]$, $\beta_2=[Z_{001}]$, $\beta_3=[Z_{100}]-[Z_{001}]$.  The
entire moment graph for the Bott-Samelson variety $Z$ is depicted in Figure
\ref{momentgraph picture}. \end{example}


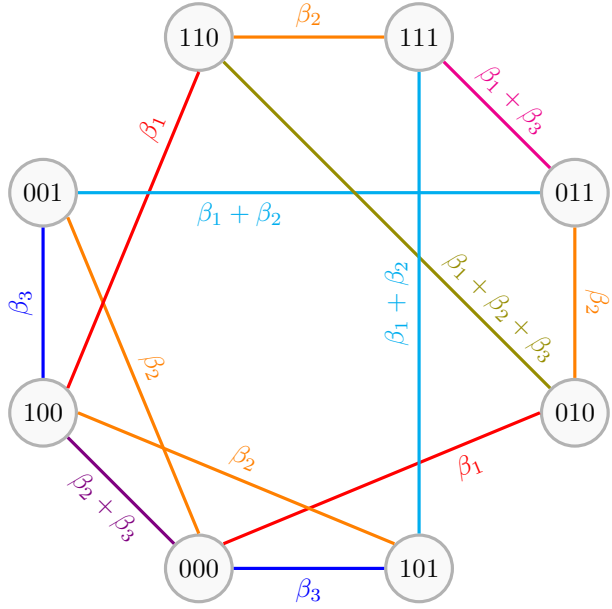
\begin{figure} \begin{center} \begin{tikzpicture}[node distance=2cm,
	roundnode/.style={circle, draw=gray!60, fill=gray!5, very thick, minimum
	size=7mm}, ]
\node[roundnode]	(zeroone) {000}; \node[roundnode]	(zerotwo)		[right = of
	zeroone] {101}; \node[roundnode]	(onezero)		[above left = of zeroone]
	{100}; \node[roundnode]	(onethree) [above right = of zerotwo]		{010};
	\node[roundnode] (twozero)		[above = of onezero]			{001};
	\node[roundnode]	(twothree)		[above = of onethree] {011}; \node[roundnode]
	(threeone)		[above right = of twozero]		{110}; \node[roundnode]
	(threetwo) [right = of threeone]			{111};

\draw[very thick, violet] 		(zeroone.north west) -- (onezero.south east)
	node[pos=0.5, below, sloped] {$\beta_2+\beta_3$}; \draw[very thick, orange]
	(zeroone.north) -- (twozero.south east)			node[pos=0.5, above, sloped]
	{$\beta_2$}; \draw[very thick, red] 		(zeroone.north east) --
	(onethree.west) node[pos=0.75, below, sloped] {$\beta_1$}; \draw[very thick,
	blue] (zeroone.east) -- (zerotwo.west)				node[pos=0.5, below,sloped]
	{$\beta_3$}; \draw[very thick, blue] 		(onezero.north) -- (twozero.south)
	node[pos=0.5, above, sloped] {$\beta_3$}; \draw[very thick, red]
	(onezero.north east) -- (threeone.south) node[pos=0.8, above, sloped]
	{$\beta_1$}; \draw[very thick, orange] (threeone.east) -- (threetwo.west)
	node[pos=0.5, above, sloped] {$\beta_2$}; \draw[very thick, cyan]
	(twozero.east) -- (twothree.west)				node[pos=0.35, below, sloped]
	{$\beta_1+\beta_2$}; \draw[very thick, magenta] 	(threetwo.south east) --
	(twothree.north west)		node[pos=0.5, above, sloped] {$\beta_1+\beta_3$};
	\draw[very thick, orange] (twothree.south) -- (onethree.north) node[pos=0.5,
	above, sloped] {$\beta_2$}; \draw[very thick, olive] (threeone.south east) --
	(onethree.north west)		node[pos=0.8, above, sloped]
	{$\beta_1+\beta_2+\beta_3$}; \draw[very thick, cyan] (zerotwo.north) --
	(threetwo.south) node[pos=0.5, above, sloped] {$\beta_1+\beta_2$}; \draw[very
	thick, orange]		(onezero.east) -- (zerotwo.north west) node[pos=0.5, above,
sloped] {$\beta_2$}; \end{tikzpicture} \end{center} \caption{The moment graph
for $Z(\alpha_1,\alpha_2,\alpha_1)$. \label{momentgraph picture}} \end{figure}

\section{Moduli space of stable maps} \label{moduli space}
We now turn to our computation of the quantum cohomology of the Bott-Samelson
variety $Z=Z(\alpha_1,\alpha_2,\alpha_1)$. In order to compute a presentation
$QH^*(Z)$, we need to compute many Gromov-Witten invariants. Some will vanish
using curve neighborhoods, however we will need to compute some nonzero
invariants to use brute-force calculations to finish the presentation. This
section provides the tools we need to compute these nonzero invariants. We
start with a definition.

\begin{defn} We say an effective curve class $\beta\in H_2(Z)$ is {\it
indecomposable} if $\beta$ cannot be expressed: $\beta=\beta_1+\beta_2$, where
$\beta_1,\beta_2\in H_2(Z)$ are effective.  \end{defn}

\begin{example} In the threefold $Z=Z(\alpha_1,\alpha_2,\alpha_1)$ in type
	$A_2$, the classes $[Z_{010}]$ and $[Z_{001}]$ are indecomposable, but
	$$[Z_{100}] = [Z_{001}] + \beta_3$$ where $\beta_3 = [C]$, the fiber of
	$\theta:Z\to F\ell(3)$ over the identity. 

Indeed, since $C$ is a fiber of $\theta$, $\theta_*[C] = 0$. Furthermore, $C$
	is a $T$-stable curve which is not contained in the fiber of
	$\pi:Z(\alpha_1,\alpha_2,\alpha_1)\to Z(\alpha_1,\alpha_2)$. So by Lemma
	\ref{isomorphism}, $\pi_*\beta_3 = [Z_{10}]$. Since $\pi_*\beta_3 =
\pi_*([Z_{100}] - [Z_{001}])$ and $\theta_*\beta_3 = \theta_*([Z_{100}] -
[Z_{001}])$, and $Z$ is a fiber product, we have $\beta_3=[Z_{100}] -
[Z_{001}]$ 

	In fact, the generators $\beta_1,\beta_2,\beta_3$ of the effective cone
	for $Z(\alpha_1,\alpha_2,\alpha_1)$ are all indecomposable.
\end{example}

Recall, the moduli space of stable maps $\overline{M}_{0,n}(Z,\beta)$ consists
of stable maps $f:C\to Z$, where $C$ is decorated with $n$ non-singular marked
points $p_1,\ldots,p_n\in C$, and $f_*[C]=\beta$. (The stability condition is
equivalent to the automorphism group of the marked curve $C$ being finite.) 

\begin{prop} \label{expected dimension} If $\beta\in H_2(Z)$ is indecomposable,
	and the Dynkin diagram of $G$ is simply laced, then the moduli space
	$\moduli{1}{Z}{\beta}$ is unobstructed; that is, $\moduli{1}{Z}{\beta}$ is
	smooth, irreducible, and has the expected dimension $$\dim
\moduli{1}{Z}{\beta} = \dim Z + \int_{\beta}c_1(T_Z) - 2.$$ \end{prop}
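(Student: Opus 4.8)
The plan is to exploit the iterated $\P^1$-bundle structure $Z \xrightarrow{\pi} Z'$ and argue by induction on $\dim Z = n$. The base case $n = 0$ (or $n=1$, where $Z = \P^1$) is classical, since $\overline{M}_{0,1}(\P^1, d)$ is well known to be smooth of the expected dimension. For the inductive step, fix an indecomposable effective class $\beta \in H_2(Z)$. The key observation is that unobstructedness is equivalent to the vanishing of $H^1(C, f^*T_Z)$ for every stable map $f\colon C \to Z$ of class $\beta$ (together with a dimension count, which then follows from Riemann--Roch on each component plus deformation theory of nodal curves). So the core of the argument is: for every stable map $f\colon C \to Z$ with $f_*[C] = \beta$, show $H^1(C, f^*T_Z) = 0$.

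First I would set up the relative tangent sequence for $\pi\colon Z\to Z'$, namely $0 \to T_{Z/Z'} \to T_Z \to \pi^*T_{Z'} \to 0$. Pulling back along $f$ and taking cohomology gives a long exact sequence, so it suffices to show $H^1(C, f^*T_{Z/Z'}) = 0$ and $H^1(C, f^*\pi^*T_{Z'}) = H^1(C, (\pi f)^*T_{Z'}) = 0$. For the second term: $\pi f\colon C \to Z'$ is a stable map (after possibly contracting components, using that stabilization of the composite is a stable map) of class $\pi_*\beta$, and $\pi_*\beta$ is either zero or still indecomposable effective in $Z'$ — here I would invoke the argument from the Bott-Samelson subvariety basis / fiber-product description to rule out a decomposition of $\pi_*\beta$ arising from an indecomposable $\beta$. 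If $\pi_*\beta = 0$, then $C$ maps into a single fiber $\P^1$ of $\pi$ and the whole statement reduces to the $\P^1$ (or $\P^1\times\P^1$ via the Cartesian square in Proposition (e)) case; otherwise the inductive hypothesis applied to $Z'$ gives $H^1(C', (\pi f)^*T_{Z'}) = 0$ for the stabilized composite $C'$, and a comparison of $C$ with $C'$ (the contracted trees are unions of $\P^1$'s mapping to points, contributing nothing in $H^1$) upgrades this to $H^1(C, (\pi f)^*T_{Z'}) = 0$. For the first term, $T_{Z/Z'}$ is the relative tangent line bundle of a $\P^1$-bundle; restricted to a fiber it is $\mathcal{O}(2)$, and restricted to the section $s(Z')$ it is (by the Cartesian diagram, pulling back from $G/P_{\alpha_n}\subset G/B$ or a direct Chern-class computation) a line bundle whose degree I can control. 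The point is to show $\deg (f^*T_{Z/Z'})|_{C_i} \ge -1$ on every component $C_i$ of $C$ — equivalently, that no component of $C$ maps into a locus where $T_{Z/Z'}$ is too negative — which again uses indecomposability of $\beta$ to forbid a component "wasting" degree in a direction that would drive this bundle negative; then $H^1(C, f^*T_{Z/Z'}) = 0$ follows from the standard argument on a tree of $\P^1$'s (a line bundle with all restrictions of degree $\ge -1$ has vanishing $H^1$).

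Once $H^1(C, f^*T_Z) = 0$ for all stable maps is established, smoothness of $\overline{M}_{0,1}(Z,\beta)$ at every point follows from obstruction theory, the dimension equals the expected one by the standard formula (virtual dimension $= \dim Z + \int_\beta c_1(T_Z) + \dim \overline{M}_{0,1} - 3 = \dim Z + \int_\beta c_1(T_Z) - 2$, since $\dim\overline{M}_{0,1} = 1$... more precisely the marked-point bookkeeping gives exactly the stated formula), and irreducibility follows because a smooth space of the expected dimension over the (irreducible) base, fibered compatibly with the tower of $\P^1$-bundles, is irreducible — alternatively one deduces irreducibility from the fibration $\overline{M}_{0,1}(Z,\beta) \to \overline{M}_{0,1}(Z', \pi_*\beta)$ and induction, with $\P^1$-bundle-type fibers.

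The main obstacle is pinning down precisely \emph{which} degree inequalities hold for $f^*T_{Z/Z'}$ on the components of $C$, and in particular proving that indecomposability of $\beta$ (rather than something stronger like being a line in the Mori cone or being primitive) is exactly enough to prevent a component from forcing $H^1 \ne 0$. This is where the simply-laced hypothesis must enter: the coefficients $-(\alpha_i, \alpha_j^\vee)$ appearing in the cohomology relations (and in the moment-graph curve classes of Theorem \ref{main theorem}) are all $0$ or $-1$ in simply-laced type, which bounds how negative the relevant line bundles on Bott-Samelson subvarieties can be; in a non-simply-laced type a coefficient $-2$ or $-3$ would break the degree $\ge -1$ bound. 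I expect the careful version of this step to require the explicit description of the normal bundles / the section class $s_*[C]$ versus fiber class $h$ from Theorem \ref{main theorem}, used to control $\int_\beta c_1(T_{Z/Z'})$ against the structure of $\beta$.
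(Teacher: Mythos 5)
Your overall strategy (induction up the tower of $\P^1$-bundles, the relative tangent sequence $0\to T_\pi\to T_Z\to \pi^*T_{Z'}\to 0$, vanishing of $H^1$ of each piece, and locating the simply-laced hypothesis in the bound $(\alpha_i,\alpha_j^\vee)\ge -1$) is the paper's. But the step you yourself flag as ``the main obstacle'' --- establishing $\deg f^*T_\pi|_{C_i}\ge -1$ on every component of the domain --- is exactly the step you leave unresolved, and the machinery you set up to attack it (trees of $\P^1$'s, stabilization of the composite $\pi\circ f$, comparison of $C$ with its stabilization) is both harder than necessary and never carried out. The observation that collapses all of this: since $\beta$ is indecomposable and there is only \emph{one} marked point, every stable map $[g:C\to Z]\in\moduli{1}{Z}{\beta}$ has irreducible domain $C\simeq\P^1$. (Indecomposability forces all but one component to be contracted; a tree with two or more components has a contracted leaf with at most two special points, violating stability.) So there are no component-by-component estimates, no stabilization issues, and the whole problem reduces to showing that two line bundles on a single $\P^1$ have degree $\ge -1$.

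The second missing ingredient is how to actually compute $\deg g^*T_\pi$. From the Cartesian square $Z\simeq Z'\times_{G/P_{\alpha_n}}G/B$ one gets $T_\pi=\theta^*T_{p_{\alpha_n}}$, hence $\deg g^*T_\pi=\int_{G/B}c_1(T_{p_{\alpha_n}})\cap\theta_*\beta$. Since $\beta$ is represented by a $B$-stable curve, $\theta_*\beta$ is either $0$ (so $g^*T_\pi$ is trivial) or a multiple of a single Schubert curve class $[X(s_{\alpha'})]$, and indecomposability forces the multiplicity to be $1$; the degree is then the pairing of $\alpha'$ with $\alpha_n$, which is $2$ if $\alpha'=\alpha_n$ and lies in $\{0,-1\}$ otherwise precisely because $G$ is simply laced. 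Your instinct to route the degree bound through the moment-graph classes of Theorem \ref{main theorem} is not needed. For the other term, your reduction of $H^1(C,g^*\pi^*T_{Z'})$ to the inductive hypothesis is correct and is the paper's argument: $\pi_*\beta$ is zero or indecomposable, since a decomposition $\pi_*\beta=\beta_1'+\beta_2'$ would make $\beta-s_*\beta_1'$ effective and decompose $\beta$. As written, though, your proposal is a program whose central estimate is still open rather than a proof.
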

\begin{proof} The proof is by induction. Let $[g:C\to Z]\in
	\moduli{1}{Z}{\beta}$. Since $\beta$ is indecomposable and $C$ has a single
	marked point, $C\simeq \P^1$. There are two cases:

(a) $\beta$ is the class of the fiber of $\pi:Z\to Z'$. Since $g^*\pi^*T_{Z'}$
	is a trivial line bundle, $H^1(C,g^*\pi^*T_{Z'}) = 0$.  Moreover,
	$T_{\pi}=\theta^*T_{p_{\alpha_n}}$ and $\theta_*\beta = [X(s_{\alpha_n})]$
	which implies $g^*T_{\pi}\simeq \mathcal{O}_{\P^1}(2)$ (since
	$c_1(T_{p_{\alpha_n}})\cap [X(s_{\alpha_n})] = (\alpha_n,\alpha_n^{\vee})$).
	Thus $H^1(C,g^*T_{\pi}) = 0$, and therefore $H^1(C,g^*T_Z) = 0$.

(b) Otherwise, $\pi_*\beta\neq 0$ (and effective). Moreover, $\pi_*\beta$ is
	indecomposable, otherwise $\pi_*\beta = \beta_1' + \beta_2'$ which implies
	$\beta - s_*\beta_1'$ is effective. By induction,
	$\moduli{1}{Z'}{\pi_*\beta}$ is unobstructed, thus $H^1(C,g^*\pi^*T_{Z'})=0$.
To compute $H^1(C,g^*T_{\pi})$ there are two cases: \begin{enumerate} \item If
			$\theta_*\beta = 0$, then $g^*T_{\pi}$ is the trivial line bundle on $C$
			since $$\int_C c_1(g^*T_{\pi})\cap [C] = \int_Z c_1(T_{\pi})\cap \beta =
			\int_{G/B} c_1(T_{p_{\alpha_n}})\cap \theta_*\beta.$$ Thus,
			$H^1(C,g^*T_{\pi}) = 0$. 

\item If $\theta_*\beta \neq 0$, then since $\beta$ is represented by a
	$B$-stable curve, the image of that curve under $\theta$ is a Schubert curve.
		In particular, there is a unique $\alpha'$ for which the integral
		$$\int_{G/B}\theta_*\beta\cdot[Y(s_{\alpha'})] = \int_Z
		\beta\cdot\theta^*[Y(s_{\alpha'})] = \sum\int_Z\beta\cdot
		\sigma_{\varepsilon}$$ has value equal to 1, since $\beta$ is
		indecomposable. Therefore, $g^*T_{\pi}\simeq \mathcal{O}_{\P^1}(d)$ where
		$d=(\alpha,\alpha')\geq -1$ since the Dynkin diagram for $G$ is simply
laced, and so $H^1(C,g^*T_{\pi}) = 0$.  \end{enumerate} In either case, we have
	$H^1(C,g^*T_{\pi}) = 0$ and $H^1(C,g^*\pi^*T_{Z'}) = 0$, therefore
	$H^1(C,g^*T_Z) = 0$.  \end{proof}

\begin{cor}\label{fibercor} If $\beta$ is indecomposable and $Z$ is the
	disjoint union of curves of class $\beta$ (i.e. if $Z$ is a
	$\beta$-fibration), then $ev:\moduli{1}{Z}{\beta}\to Z$
is an isomorphism.  \end{cor}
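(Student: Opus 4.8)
The plan is to deduce this from the smoothness statement of Proposition~\ref{expected dimension} together with an explicit description of the fibers of $ev$ and Zariski's main theorem. Write $q\colon Z\to Q$ for the $\beta$-fibration, so that each fiber $q^{-1}(b)$ is a smooth rational curve with $[q^{-1}(b)]=\beta$. Since $\beta$ is the class of a fiber, the normal bundle $N_{q^{-1}(b)/Z}$ is the pullback of $T_Q$ along $q$ and hence is trivial on each fiber; therefore $\int_{\beta}c_1(T_Z)=\int_{q^{-1}(b)}c_1\bigl(T_{q^{-1}(b)}\bigr)=2$, and Proposition~\ref{expected dimension} tells us $\moduli{1}{Z}{\beta}$ is smooth and irreducible of dimension $\dim Z$. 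As $Z$ is also smooth (hence normal and irreducible) and $ev$ is proper ($\moduli{1}{Z}{\beta}$ is proper over $\C$ and $Z$ is separated), it suffices to prove that $ev$ is bijective on points: a proper bijective morphism is finite, a finite surjective morphism of integral $\C$-varieties with finite fibers is birational in characteristic $0$, and a finite birational morphism from an integral scheme onto a normal variety is an isomorphism.

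For bijectivity, take a point $[f\colon C\to Z,\,p_1]$ of $\moduli{1}{Z}{\beta}$. Because $\beta$ is indecomposable and $C$ carries a single marked point, $C\simeq\P^1$, exactly as in the proof of Proposition~\ref{expected dimension}. Composing $f$ with $q$ and pushing forward fundamental classes gives $q_*f_*[C]=q_*\beta=0$, so $q\circ f$ is constant and $f(C)$ lies in a single fiber $F=q^{-1}(b)\simeq\P^1$; since $f_*[C]=\beta=[F]$, the map $f\colon C\to F$ has degree one and is an isomorphism. Hence the isomorphism class of $(C,p_1,f)$ is determined by the single point $z:=f(p_1)\in Z$: given a second triple $(C',p_1',f')$ with $f'(p_1')=z$, the point $z$ forces the same fiber $F$, and $f'^{-1}\circ f$ is an isomorphism $C\to C'$ carrying $p_1$ to $p_1'$. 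Thus $ev$ is injective; it is surjective because any $z\in Z$ lies on its fiber $F_z$, and $(F_z,\,z,\,F_z\hookrightarrow Z)$ is a stable map of class $\beta$ evaluating to $z$.

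Putting this together, $ev$ is proper and quasi-finite, hence finite; surjective and everywhere injective, hence birational in characteristic $0$; and a finite birational morphism from the integral scheme $\moduli{1}{Z}{\beta}$ onto the normal variety $Z$ is an isomorphism. I expect the only genuinely delicate point to be this last implication: bijectivity alone does not force a morphism to be an isomorphism without normality of the target and the characteristic-zero hypothesis (which exclude phenomena such as the normalization of a cusp or a Frobenius twist), and it is here that the smoothness of both $Z$ and $\moduli{1}{Z}{\beta}$ — the latter supplied by Proposition~\ref{expected dimension} — does the real work. The remaining steps are routine manipulations with stable maps, closely parallel to those in the proof of Proposition~\ref{expected dimension}.
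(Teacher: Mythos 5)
Your proof is correct and follows essentially the same route as the paper: smoothness of $\moduli{1}{Z}{\beta}$ from Proposition~\ref{expected dimension}, bijectivity of $ev$ from the fibration structure, and then the conclusion that a bijective morphism in this situation is an isomorphism. The paper dispatches the last step with the (too glib) remark that ``both are varieties over $\C$,'' whereas you correctly supply the missing justification via properness, quasi-finiteness, characteristic zero, and normality of the target.
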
 \begin{proof} Since $\beta$ is indecomposable,
	$\moduli{1}{Z}{\beta}$ is smooth. Moreover, every point in $Z$ lies on a
unique curve of class $\beta$, therefore $ev:\moduli{1}{Z}{\beta}\to Z$ is a
bijection. Since both are varieties over $\C$, $ev$ is an isomorphism.
\end{proof}

\begin{example}\label{beta3modulispace} For $Z=Z(\alpha_1,\alpha_2,\alpha_1)$
	and $\beta=\beta_3$ $$ev:\moduli{1}{Z}{\beta}\to Z_{101}$$ is an isomorphism
	from Corollary \ref{fibercor}. 

If $h$ is the class of the fiber for any Bott-Samelson variety $Z$,
$ev:\moduli{1}{Z}{h}\to Z$ is an isomorphism since $h$ is indecomposable.
\end{example}


\section{Curve neighborhoods} \label{curve nbhds}
As we stated in the introduction, the connection between our moment graph
	result and quantum cohomology is given by curve neighborhoods.  Background on
	curve neighborhoods, particularly in the context of homogeneous spaces, can
	be found in \cite{curvenbhds}. The main difference here is that curve
	neighborhoods need not ``grow.''

\begin{defn} Let $X$ be a variety, $Y$ a subset of $X$, and $\beta\in A_1(X)$
	an effective curve class. The curve neighborhood $\Gamma_{\beta}(Y)$ is
	defined by $$\Gamma_{\beta}(Y) := ev_1(ev_2^{-1}(Y))$$ where
	$ev_i:\moduli{2}{X}{\beta}\to X$ ($i=1,2$) are the evaluation morphisms;
	$\Gamma_{\beta}(Y)$ is given the reduced scheme structure.

Observe, if $X$ is a $G$-variety for an algebraic group $G$, and $Y$ is a
	$G$-stable closed subvariety of $X$, then $\Gamma_{\beta}(Y)$ is a $G$-stable
	closed subvariety of $X$. Also, note that since the morphism
	$ev:\moduli{2}{X}{\beta}\to X$ is proper, $\Gamma_{\beta}(Y)$ can be realized
as the closure of the union of all curves $C$ of class $\beta$ passing through
$Y$.  \end{defn}

\begin{prop} For any $B$-stable subvariety $\Omega\subset
	Z(\alpha_1,\alpha_2,\alpha_1)$, the curve neighborhood
	$\Gamma_{\beta_3}(\Omega)\subset Z_{101}$. Furthermore, if $\theta(\Omega) =
	X(s_1)$, then $\Gamma_{\beta_3}(\Omega) = Z_{101}$.  \end{prop}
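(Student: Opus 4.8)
The plan is to analyze where curves of class $\beta_3$ live. Recall $\beta_3 = [Z_{100}] - [Z_{001}] = [C]$, the class of a fiber of $\theta\colon Z \to F\ell(3)$ over the identity (as established in the example following the definition of indecomposability). By Corollary \ref{fibercor} applied to $Z_{101} \simeq \P^1 \times \P^1$, the moduli space $\moduli{1}{Z_{101}}{\beta_3}$ is identified with $Z_{101}$ via $ev$, and in fact every curve of class $\beta_3$ in $Z$ is, I claim, actually contained in the subvariety $Z_{101}$. First I would verify this containment claim: a curve $C_0$ of class $\beta_3$ satisfies $\theta_*[C_0] = \theta_*\beta_3 = 0$, so $\theta(C_0)$ is a point; hence $C_0$ lies in a fiber of $\theta$. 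Since $\theta|_{Z_{101}}$ collapses the second $\P^1$ factor to $X(s_1)$ (by the fiber-product description of $Z_{101} \simeq \P^1\times\P^1$ from the example in Section \ref{moment graph}), the fibers of $\theta$ restricted to $Z_{101}$ are exactly the $\P^1$'s of class $\beta_3$, and a dimension/degree count shows these are the only curves of class $\beta_3$ anywhere in $Z$. Concretely, $\pi_*\beta_3 = [Z_{10}]$, and a curve mapping isomorphically onto a curve in class $[Z_{10}]$ inside $Z_{\alpha_1,\alpha_2} = \mathbb{F}_1$ must be the exceptional section; its preimage under $\pi$ is precisely $Z_{101}$. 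So any irreducible curve of class $\beta_3$ meeting $\Omega$ already lies in $Z_{101}$, giving $\Gamma_{\beta_3}(\Omega) \subseteq Z_{101}$ for every subvariety $\Omega$, $B$-stable or not.

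For the second assertion, suppose $\theta(\Omega) = X(s_1)$. I want to show $\Gamma_{\beta_3}(\Omega) = Z_{101}$, i.e. the union of $\beta_3$-curves meeting $\Omega$ sweeps out all of $Z_{101}$. Using the isomorphism $Z_{101} \simeq \P^1 \times \P^1$ with $\theta$ being (essentially) the first projection onto $X(s_1) \simeq \P^1$ and $\beta_3$-curves being the fibers $\{pt\}\times\P^1$ of the second projection, it suffices to show that $\Omega \cap Z_{101}$ surjects onto the first $\P^1$ factor under $\theta$. But $\theta(\Omega) = X(s_1) = \theta(Z_{101})$, so for every point $p$ in the first factor there is a point of $\Omega$ mapping to $p$ under $\theta$; I must check such a point can be taken inside $Z_{101}$. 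Here I would use that $\Omega$ is $B$-stable and that $Z_{101}$ is $B$-stable: since $\theta(\Omega) = X(s_1)$, which has dimension $1$, and $\theta^{-1}(X(s_1)) \supseteq Z_{101}$, consideration of the $B$-orbit structure shows $\Omega$ must meet $Z_{101}$ in a subvariety surjecting onto $X(s_1)$ — indeed $\Omega$ contains the $B$-fixed point over which the $\theta$-fiber is $C$, and running the one-parameter subgroups of $B$ moves this fixed point along, so $\Omega \cap Z_{101}$ is $B$-stable of dimension at least $1$ and hence (being a closed $B$-stable curve or larger inside $\P^1\times\P^1$) surjects onto the first factor. Then every fiber $\beta_3$-curve meets $\Omega$, so $\Gamma_{\beta_3}(\Omega) = Z_{101}$.

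The main obstacle I anticipate is the second part: pinning down exactly why $\Omega \cap Z_{101}$ is large enough to surject onto $X(s_1)$ under $\theta$, rather than, say, meeting only a single fiber. This requires genuinely using the $B$-stability of $\Omega$ — the hypothesis $\theta(\Omega) = X(s_1)$ on its own controls only the image, not the intersection with the particular subvariety $Z_{101}$. I expect the cleanest argument routes through the $B$-orbit decomposition: $X(s_1) = Bs_1B/B \cup \{eB\}$ has two $B$-orbits, and the $\theta$-preimage structure together with $B$-stability of both $\Omega$ and $Z_{101}$ forces $\Omega$ to contain a $B$-stable curve in $Z_{101}$ mapping isomorphically (outside a point) onto $X(s_1)$; by Lemma \ref{isomorphism} such a curve is a section of $\theta|_{Z_{101}}$, which is exactly what is needed. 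An alternative, possibly slicker, route is a dimension count: $\Gamma_{\beta_3}(\Omega)$ is $B$-stable and contained in $Z_{101}$; if it were a proper subvariety it would be a union of $B$-stable curves in $Z_{101}$, but the only $B$-stable curves there either map to a point under $\theta$ (a single $\beta_3$-fiber, too small to be a curve neighborhood of something with $\theta(\Omega) = X(s_1)$) or surject onto $X(s_1)$, and one checks directly that $\Gamma_{\beta_3}(\Omega)$ cannot be just one such curve, forcing it to be all of $Z_{101}$.
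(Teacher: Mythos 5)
Your argument for the first claim is correct and complete, and it takes a mildly different route from the paper: from $\pi_*\beta_3=[Z'_{10}]$ and the fact that $[Z'_{10}]$ is the class of the $(-1)$-curve in $Z'\simeq\mathbb{F}_1$ (primitive in the effective cone, and rigid since it has negative self-intersection), every irreducible curve of class $\beta_3$ maps onto $Z'_{10}$ under $\pi$ and hence lies in $\pi^{-1}(Z'_{10})=Z_{101}$. The paper instead invokes the fact that $\theta$ is an isomorphism away from $Z_{101}^{\circ}$; your version via $\pi$ is more self-contained, and you are right that this half needs no $B$-stability.

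The second half has a genuine gap, and it is exactly the one you flag. You need every fiber of $pr_2=\theta|_{Z_{101}}$ to meet $\Omega$, hence you need $\Omega\cap Z_{101}$ to dominate $X(s_1)$ under $\theta$, and your $B$-orbit argument does not deliver this. The unique $B$-fixed point of $Z$ is $x_{000}$, and the fiber $C=\theta^{-1}(eB)$ is itself a closed, $B$-stable curve through $x_{000}$ contained in $Z_{101}$ that maps to a \emph{point} under $\theta$; so even granting that $\Omega\cap Z_{101}$ is a $B$-stable subvariety of dimension at least one (a claim you also do not justify), nothing you say excludes $\Omega\cap Z_{101}=C$, whence $\Gamma_{\beta_3}(\Omega)=C\subsetneq Z_{101}$, with the hypothesis $\theta(\Omega)=X(s_1)$ hypothetically supplied by points of $\Omega$ lying outside $Z_{101}$. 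Your ``alternative route'' founders on the same step (``one checks directly that $\Gamma_{\beta_3}(\Omega)$ cannot be just one such curve'' is precisely what is unproved). The missing ingredient is the identity $\theta^{-1}(X(s_1))=Z_{101}$, i.e.\ the positive-dimensional fibers of $\theta$ occur exactly over $X(s_1)$ and sweep out $Z_{101}$; in the configuration-variety picture, the fiber of $\theta$ over a flag $(V_3\subset V_{23})$ is positive-dimensional if and only if $V_{23}=\C^2$, which is the defining condition of $Z_{101}$. Granting this, $\theta(\Omega)\subseteq X(s_1)$ forces $\Omega\subseteq Z_{101}$, and then $\theta(\Omega)=X(s_1)$ says $\Omega$ meets every fiber $pr_2^{-1}(w)$, so $\Gamma_{\beta_3}(\Omega)=Z_{101}$. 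This is how the paper argues, and note that $B$-stability of $\Omega$ plays no role in the second claim either — your instinct that it must be ``genuinely used'' there is a red herring.
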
 \begin{proof}
		Note that any curve of class $\beta_3$ collapses under the map
		$\theta:Z(\alpha_1,\alpha_2,\alpha_1)\to G/B$. Since $\theta$ is an
		isomorphism outside the locally closed set $Z_{101}^{\circ}$, any curve of
		class $\beta_3$ is contained in the sub Bott-Samelson variety $Z_{101}$.
		Since $Z_{101}$ is a closed, $B$-stable variety in
		$Z(\alpha_1,\alpha_2,\alpha_1)$, the first claim follows. 

For the second claim, since $Z_{101}\simeq \P^1\times \P^1$ where $\beta_3$ is
the class of the fiber of the second projection, every point $x\in Z_{101}$ has
a unique curve of class $\beta_3$ passing through it which also intersects
$\Omega$.  \end{proof}

\begin{lemma} \label{curvenbhd example} The following are curve neighborhoods
	for the Bott-Samelson variety $Z(\alpha_1,\alpha_2,\alpha_1)$:
	$$\Gamma_{\beta_3}(Z_{100}) = Z_{101},\quad \Gamma_{\beta_3}(Z_{010}) =
	\theta^{-1}(x_e),\quad \Gamma_{\beta_3}(Z_{001}) = Z_{101},\quad
\Gamma_{\beta_1}(Z_{100}) = Z_{110}.$$ \end{lemma}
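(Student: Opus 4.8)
The plan is to establish the four equalities in three groups. The first and third, $\Gamma_{\beta_3}(Z_{100})=Z_{101}$ and $\Gamma_{\beta_3}(Z_{001})=Z_{101}$, I would read off directly from the preceding Proposition: a one-line computation with the immersions $j_\varepsilon$ and the morphism $\theta$ gives $\theta(Z_{100})=\{p_1B:p_1\in P_{\alpha_1}\}=X(s_1)$ and likewise $\theta(Z_{001})=\{p_3B:p_3\in P_{\alpha_1}\}=X(s_1)$, and since $Z_{100}$ and $Z_{001}$ are $B$-stable the Proposition yields both equalities.

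For $\Gamma_{\beta_3}(Z_{010})=\theta^{-1}(x_e)$ I would argue as follows. By the preceding Proposition, $\Gamma_{\beta_3}(Z_{010})\subseteq Z_{101}$, and (as in that proof) every curve $C$ of class $\beta_3$ is collapsed by $\theta$ --- indeed $\theta_*\beta_3=\theta_*[Z_{100}]-\theta_*[Z_{001}]=[X(s_1)]-[X(s_1)]=0$ --- and lies in $Z_{101}$, with $\theta(Z_{101})=X(s_1)$. If such a $C$ meets $Z_{010}$ at a point $z_0$, then $\theta(C)=\{\theta(z_0)\}$ with $\theta(z_0)\in\theta(Z_{010})=X(s_2)$ and $\theta(z_0)\in\theta(Z_{101})=X(s_1)$; since $X(s_1)\cap X(s_2)=\{x_e\}$ this forces $C\subseteq\theta^{-1}(x_e)$, whence $\Gamma_{\beta_3}(Z_{010})\subseteq\theta^{-1}(x_e)$. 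Conversely $\theta^{-1}(x_e)$ is itself an irreducible curve of class $\beta_3$ (the fiber of $\theta$ over the identity coset) passing through the fixed point $000=[1,1,1]\in Z_{010}$, which gives the reverse inclusion.

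The substantive case is $\Gamma_{\beta_1}(Z_{100})=Z_{110}$, which I would prove by a sandwich. For $Z_{110}\subseteq\Gamma_{\beta_1}(Z_{100})$: write $s:Z'\to Z$ for the canonical section of $\pi$, so $Z_{110}=s(Z')$; the fibers $F$ of $\pi':Z'\to Z(\alpha_1)$ have class $h'$, so the curves $s(F)$ have class $s_*h'=[Z_{010}]=\beta_1$ and sweep out $s(Z')=Z_{110}$, while each $F$ meets the section $Z'_{10}$ of $\pi'$ and $s(Z'_{10})=Z_{100}$, so each $s(F)$ meets $Z_{100}$. For the reverse inclusion, $\Gamma_{\beta_1}(Z_{100})$ is contained in the image of $ev:\moduli{1}{Z}{\beta_1}\to Z$, which is closed and equals the union of all rational curves of class $\beta_1$. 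Since $\beta_1$ is indecomposable and $G$ is simply laced, Proposition~\ref{expected dimension} makes $\moduli{1}{Z}{\beta_1}$ irreducible of dimension $\dim Z+\int_{\beta_1}c_1(T_Z)-2$, and a short tangent-bundle computation --- using $c_1(T_Z)=c_1(T_\pi)+\pi^*c_1(T_{Z'})$ with $T_\pi=\theta^*T_{p_{\alpha_1}}$, $\pi_*\beta_1=h'$, $\theta_*\beta_1=[X(s_2)]$, so that $\int_{\beta_1}c_1(T_Z)=\int_{[X(s_2)]}c_1(T_{p_{\alpha_1}})+\int_{h'}c_1(T_{Z'})=-1+2=1$ --- gives $\dim\moduli{1}{Z}{\beta_1}=2$. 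Hence $ev(\moduli{1}{Z}{\beta_1})$ is an irreducible closed set of dimension at most $2$; as it contains the irreducible surface $Z_{110}$, it equals $Z_{110}$, and the chain $Z_{110}\subseteq\Gamma_{\beta_1}(Z_{100})\subseteq ev(\moduli{1}{Z}{\beta_1})=Z_{110}$ finishes the proof.

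The main obstacle is the reverse inclusion in the last equality: a priori there could be rational curves of class $\beta_1$ escaping $Z_{110}$, and controlling these is the only delicate point. The plan sidesteps a direct analysis by invoking Proposition~\ref{expected dimension}, reducing everything to the dimension count above; the alternative --- showing by hand, via the fiber-product description $Z=Z'\times_{G/P_{\alpha_1}}G/B$, that every $\beta_1$-curve maps isomorphically under $\pi$ onto a fiber of $\pi'$ and that the preimage surface of such a fiber is an $\mathbb{F}_1$ in which $\beta_1$ is the rigid $(-1)$-curve class --- is longer but elementary.
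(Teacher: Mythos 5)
Your proof is correct. For the three $\beta_3$-neighborhoods you follow essentially the paper's route: the first and third are exactly the second assertion of the preceding Proposition applied to $\theta(Z_{100})=\theta(Z_{001})=X(s_1)$, and for $\Gamma_{\beta_3}(Z_{010})$ your argument via $\theta(Z_{010})\cap\theta(Z_{101})=X(s_2)\cap X(s_1)=\{x_e\}$ is a repackaging of the paper's observation that $Z_{010}\cap Z_{101}=\{x_{000}\}$ and that a unique $\beta_3$-curve passes through that point. The genuine divergence is in $\Gamma_{\beta_1}(Z_{100})=Z_{110}$. The paper obtains $Z_{110}\subseteq\Gamma_{\beta_1}(Z_{100})$ by taking the single $T$-stable $\beta_1$-curve joining $x_{100}$ to $x_{110}$ and translating it by $B$, using transitivity of $B$ on the cell $Z_{110}^{\circ}$ together with $B$-stability of $Z_{100}$; the reverse inclusion is left implicit there (it is justified elsewhere in the paper by the Cartesian square giving the isomorphism $\overline{\pi}:\moduli{1}{Z}{\beta_1}\to\moduli{1}{Z'}{[Z'_{01}]}$, whence $ev$ maps onto $Z_{110}$). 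You instead exhibit the explicit sweeping family $s(F)$, with $F$ a fiber of $\pi'$, for the forward inclusion, and for the reverse inclusion you bound $ev(\moduli{1}{Z}{\beta_1})$ by the dimension count $\dim \moduli{1}{Z}{\beta_1}=3+1-2=2$ coming from Proposition \ref{expected dimension}, then squeeze against the irreducible surface $Z_{110}$. Your version is slightly longer but makes the containment $\Gamma_{\beta_1}(Z_{100})\subseteq Z_{110}$ fully explicit, which the paper's one-line argument does not; the paper's version buys brevity by leaning on $B$-equivariance and on the moduli-space isomorphism established later. Both are valid.
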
 \begin{proof} We prove each
	equality in the order specified above.  \begin{enumerate}[labelindent=0em,
			labelwidth=2cm, labelsep*=1em, leftmargin=!]
		\item[$\Gamma_{\beta_3}(Z_{100})$:] Note that $Z_{100}\subset Z_{101}$.  In
			fact, the image of $Z_{100}$ under $\theta: Z(\alpha_1,\alpha_1)\to
			X(s_{\alpha_1})$ is $X(s_{\alpha_1})$.  Therefore, the union of the
			fibers of $pr_2: Z(\alpha_1,\alpha_1)\to X(s_{\alpha_1})$ is the curve
			neighborhood $\Gamma_{\beta_3}(Z_{100})$. However this union is the whole
			space. Therefore, $\Gamma_{\beta_3}(Z_{100})= Z_{101}$. 

\item[$\Gamma_{\beta_3}(Z_{010})$:] Since $Z_{010}\cap Z_{101} = x_{000}$, and
	since there is a unique curve of class $\beta_3$ passing through that point
			(the curve joining $x_{000}$ to $x_{101}$) the curve neighborhood is
			$\Gamma_{\beta_3}(Z_{010}) = C$, the fiber of $\theta$ over $x_e\in G/B$.

\item[$\Gamma_{\beta_3}(Z_{001})$:] A similar analysis to
	$\Gamma_{\beta_3}(Z_{100})$ shows that $\Gamma_{\beta_3}(Z_{001}) = Z_{101}$
			(since the image of $Z_{001}$ under $\theta:Z(\alpha_1,\alpha_1)\to
			X(s_{\alpha_1})$ is $X(s_{\alpha_1})$).

\item[$\Gamma_{\beta_1}(Z_{100})$:] There is a $T$-stable curve of class
	$\beta_1$ joining $x_{100}$ to $x_{110}$. Since the $B$-action on the cell
			$Z_{110}^{\circ}$ is transitive ($\theta|_{Z_{110}}$ is an isomorphism,)
			and since $Z_{100}$ is $B$-stable, we see that
	$\Gamma_{\beta_1}(Z_{100})=Z_{110}$.  \end{enumerate} \end{proof}

\begin{remark} In general, it is clear that curve neighborhoods will be
	connected. However, it would be useful to have a condition for when a
	particular curve neighborhood is irreducible. For example, for Schubert
	varieties, it is known that all such curve neighborhoods are irreducible (see
\cite{BCMP}).  \end{remark}

\section{Quantum cohomology} \label{quantum}
Let $X$ be a smooth, projective $\C$-variety. Fix a homogeneous basis
	$\{\gamma_j\}$ for $H^*(X)$, and a basis $\{\beta_k\}$ for the cone of
	effective curves in $H_2(X)$. As a $\Q$-vector space, the (small) quantum
	cohomology $QH^*(X)= H^*(X;\Q)\otimes \Q[q^{\beta_k}]$, where there is one
	quantum parameter $q^{\beta_k}$ for each generator of the effective cone. The
	quantum product of two classes $x,y\in H^*(X)$ is defined by $$x\ast y =
	\sum_{\beta,j} I_{\beta}(x,y,\gamma_j^{\vee}) q^{\beta}\gamma_j$$ where
	$\gamma_j^{\vee}$ is Poincar\'{e} dual to $\gamma_j$, and the {\it
	Gromov-Witten invariant} $I_{\beta}(x,y,\gamma_j^{\vee})$ is defined by
	$$I_{\beta}(x,y,\gamma_j^{\vee}) = \int_{[\moduli{3}{X}{\beta}]^{\virt}}
	ev_1^*(x)\cdot ev_2^*(y)\cdot ev_3^*(\gamma_j^{\vee})$$ where
	$[\moduli{3}{X}{\beta}]^{\virt}$ is the {\it virtual fundamental class}, a
	generalization of the fundamental class which is necessary since
	$\moduli{3}{X}{\beta}$ is not generally irreducible or even equidimensional.
	The class $[\moduli{3}{X}{\beta}]^{\virt}$ has the ``expected dimension'' of
	the moduli space (see Proposition \ref{expected dimension}).

In general, for $\sigma_1,\ldots,\sigma_n\in H^*(X)$, Gromov-Witten invariants
	are defined on $\moduli{n}{X}{\beta}$ as
	$$I_{\beta}(\sigma_1,\ldots,\sigma_n) = \int_{[\moduli{n}{X}{\beta}]^{\virt}}
	\prod_j ev_j^*(\sigma_j).$$

For us, there are two important properties of Gromov-Witten invariants:
\begin{enumerate} \item The Gromov-Witten invariant
			$I_{\beta}(\sigma_1,\ldots,\sigma_n) = 0$ unless $$\sum_j \codim \sigma_j
			= \dim X + \int_{\beta} c_1(T_X) + n - 3;$$ this property is called the
			{\it codimension condition}. 

\item If $\sigma_1 \in H^2(X)$ (i.e. if $\sigma_1$ is a divisor class,) then
$$I_{\beta}(\sigma_1,\sigma_2,\ldots,\sigma_n) = \left(\int_{\beta}
\sigma_1\right)I_{\beta}(\sigma_2,\ldots,\sigma_n);$$ this property is called
the {\it divisor axiom}.  \end{enumerate}

As a result of the codimension condition, $QH^*(X)$ is equipped with a grading
compatible with the grading on $H^*(X)$: $$\deg q^{\beta} = \int_{\beta}
c_1(T_X).$$

\begin{lemma} For $Z=Z(\alpha_1,\alpha_2,\alpha_1)$, the degrees of the quantum
parameters are $$\deg q^{\beta_1} = 1,\quad \deg q^{\beta_2} = 2,\quad \deg
q^{\beta_3} = 1.$$ \end{lemma}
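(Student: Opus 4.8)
The plan is to evaluate $\deg q^{\beta_i} = \int_{\beta_i}c_1(T_Z)$ by peeling the tower of $\P^1$-bundles $Z = Z_{\w[3]}\xrightarrow{\pi_3}Z_{\w[2]}\xrightarrow{\pi_2}Z_{\w[1]}\xrightarrow{\pi_1}\{pt\}$ apart one factor at a time. For each $\P^1$-bundle $\pi_k\colon Z_{\w[k]}\to Z_{\w[k-1]}$ the relative tangent sequence gives $c_1(T_{Z_{\w[k]}}) = c_1(T_{\pi_k}) + \pi_k^*c_1(T_{Z_{\w[k-1]}})$, and from the Cartesian square exhibiting $Z_{\w[k]}$ as the fibre product $Z_{\w[k-1]}\times_{G/P_{\alpha_k}}(G/B)$ one gets $T_{\pi_k}\cong\theta_{\w[k]}^*T_{p_{\alpha_k}}$ with $p_{\alpha_k}\colon G/B\to G/P_{\alpha_k}$. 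Two numerical inputs suffice: (i) if $h_k$ denotes the class of a fibre of $\pi_k$ then $\int_{h_k}c_1(T_{\pi_k}) = 2$ while $(\pi_k)_*h_k = 0$; and (ii) for simple roots $\alpha,\beta$, $\int_{[X(s_\beta)]}c_1(T_{p_\alpha}) = (\alpha,\beta^\vee)$, which is the isomorphism $T_{p_\alpha}\cong\mathcal{L}(\alpha)$ combined with the degree formula for homogeneous line bundles on Schubert curves, and which at $\beta = \alpha$ recovers the identity $c_1(T_{p_{\alpha_n}})\cap[X(s_{\alpha_n})] = (\alpha_n,\alpha_n^\vee)$ already used in the proof of Proposition \ref{expected dimension}.

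Next I record the pushforwards of the three generators under $\pi_3$ and $\theta$. The class $\beta_2 = [Z_{001}]$ is the fibre class $h_3$, so $(\pi_3)_*\beta_2 = 0$, while $\theta$ carries $Z_{001}$ isomorphically onto $X(s_{\alpha_1})$ (the subword is reduced, or use Lemma \ref{isomorphism}), so $\theta_*\beta_2 = [X(s_{\alpha_1})]$. The class $\beta_1 = [Z_{010}]$ is the image under the section $s_3$ of the $\pi_2$-fibre $Z_{01}\subset Z_{\w[2]}$, so $(\pi_3)_*\beta_1 = [Z_{01}] = h_2$, and $\theta$ carries $Z_{010}$ isomorphically onto $X(s_{\alpha_2})$, so $\theta_*\beta_1 = [X(s_{\alpha_2})]$. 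Finally $\beta_3$ is the fibre of $\theta$ over $x_e$, so $\theta_*\beta_3 = 0$, and (as computed in the example above) $(\pi_3)_*\beta_3 = [Z_{10}]$, where $Z_{10} = s_2(Z_{\w[1]})\subset Z_{\w[2]}$; thus $(\pi_2)_*[Z_{10}] = [Z_{\w[1]}] = [\P^1]$ and $\theta_{\w[2]}$ maps $Z_{10}$ isomorphically onto $X(s_{\alpha_1})$.

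Putting $c_1(T_Z) = c_1(T_{\pi_3}) + \pi_3^*c_1(T_{Z_{\w[2]}})$ into $\int_{\beta_i}(-)$ and using the projection formula gives $\deg q^{\beta_i} = \int_{\theta_*\beta_i}c_1(T_{p_{\alpha_1}}) + \int_{(\pi_3)_*\beta_i}c_1(T_{Z_{\w[2]}})$, and the remaining surface integral is reduced one more step in the same way, giving $\int_{h_2}c_1(T_{Z_{\w[2]}}) = 2$ and $\int_{[Z_{10}]}c_1(T_{Z_{\w[2]}}) = \int_{[\P^1]}c_1(T_{\P^1}) + \int_{[X(s_{\alpha_1})]}c_1(T_{p_{\alpha_2}}) = 2 + (\alpha_2,\alpha_1^\vee)$. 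Inserting the type $A_2$ Cartan data $(\alpha_i,\alpha_i^\vee) = 2$ and $(\alpha_1,\alpha_2^\vee) = (\alpha_2,\alpha_1^\vee) = -1$ then yields $\deg q^{\beta_1} = 2 + (\alpha_1,\alpha_2^\vee) = 1$, $\deg q^{\beta_2} = (\alpha_1,\alpha_1^\vee) = 2$, and $\deg q^{\beta_3} = 2 + (\alpha_2,\alpha_1^\vee) = 1$.

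The substantive part is not the arithmetic but the bookkeeping: correctly identifying $(\pi_k)_*\beta_i$ and $\theta_*\beta_i$ — which is where the section/fibre descriptions of the subvarieties $Z_\varepsilon$ and the fact that $\theta$ collapses $\beta_3$ enter — and pinning down the normalization of $c_1(T_{p_\alpha})$ so that the Cartan pairings appear with the correct signs; once those are in place the projection formula does the rest.
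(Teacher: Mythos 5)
Your proposal is correct and follows essentially the same route as the paper: split $c_1(T_Z)$ via the relative tangent sequence of the $\P^1$-bundle, identify $T_{\pi}$ with $\theta^*T_{p_{\alpha}}$ through the fiber-product description, and push the three generators forward under $\pi$ and $\theta$ before evaluating against the Cartan pairings. Your treatment of the pushforward bookkeeping is slightly more explicit than the paper's, but the decomposition, the normalization $\int_{[X(s_\beta)]}c_1(T_{p_\alpha})=(\alpha,\beta^\vee)$, and the final arithmetic all coincide.
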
 \begin{proof} Let $Z'$ denote the Bott-Samelson
	variety $Z(\alpha_1,\alpha_2)$. From the exact sequence of tangent bundles on
	$Z$ $$\begin{tikzcd} 0 \arrow{r} & T_{\pi} \arrow{r} & T_Z \arrow{r} &
	\pi^*T_{Z'} \arrow{r} & 0 \end{tikzcd}$$ we get $c_1(T_Z) = c_1(T_{\pi}) +
	c_1(\pi^*T_{Z'})$, where $T_{\pi}$ denotes the relative tangent bundle.
	Therefore, from the projection formula \begin{align*} \int_{\beta} c_1(T_Z)
		&= \int_Z c_1(T_Z)\cdot \beta\\ &= \int_{G/B} c_1(T_{p_{\alpha_1}}) \cdot
		\theta_*\beta + \int_{Z'} c_1(T_{Z'})\cdot \pi_*\beta.  \end{align*}

For $\beta = \beta_2$, since $\beta_2 = \pi^*[pt]$ the second integral
	vanishes. Thus $$\deg q^{\beta_2} = \int_{\beta_2} c_1(T_Z) = \int_{G/B}
	c_1(T_{p_{\alpha_1}})\cdot [X(s_{\alpha_1})] = (\alpha_1,\alpha_1^{\vee}) =
	2.$$

For $\beta = \beta_3$, the pushforward $\theta_*\beta_3 = 0$, which implies the
	first integral vanishes. Thus \begin{align*} \deg q^{\beta_3} =
		\int_{\beta_3}c_1(T_Z) &= \int_{Z'} c_1(T_{Z'})\cdot [Z'_{10}]\\ &=
		\int_{G/B} c_1(T_{p_{\alpha_2}})\cdot [X(s_{\alpha_1})] + \int_{\P^1}
	c_1(T_{\P^1}) \cdot [pt]\\ &= (\alpha_2,\alpha_1^{\vee}) + 2 = 1.
	\end{align*}

For $\beta=\beta_1$, we have \begin{align*} \deg q^{\beta_1} =
\int_{\beta_1}c_1(T_Z) &= \int_{G/B} c_1(T_{p_{\alpha_1}})\cdot
[X(s_{\alpha_2})] + \int_{Z'} c_1(T_{Z'})\cdot [Z'_{01}]\\ &=
(\alpha_1,\alpha_2^{\vee}) + 2 = 1.  \end{align*} \end{proof}

	\begin{remark} \label{Fano}
	In the course of proving the previous Lemma, we showed $c_1(T_Z) =
	3\sigma_{100} + \sigma_{010} + 2\sigma_{001}$.		

	In \cite{BSample}, a basis for the ample cone on a Bott-Samelson
	variety is given; the $\mathcal{O}_{\w}(1)$ basis. These line bundles are
	pullbacks of the line bundles $L_{\omega_{\alpha_k}}$ corresponding to
	a dominant fundamental weight $\omega_{\alpha_k}$. Since
	$c_1(L_{\omega_{\alpha_k}}) = [Y(s_{\alpha_k})]$, the line bundle
	$\mathcal{O}_{\w}(1)$ has Chern class $\sigma_{100}+\sigma_{001}$ (for
	$\w = (1,2,1)$). The other two generators have Chern classes
	\begin{align*}
		c_1(\mathcal{O}_{1,2}(1)) &= \sigma_{010}\\
		c_1(\mathcal{O}_1(1)) &= \sigma_{100}
	\end{align*}
	Therefore, we can write
	$$c_1(T_Z) = c_1(\mathcal{O}_1(1)) + c_1(\mathcal{O}_{1,2}(1)) +
	2c_1(\mathcal{O}_{1,2,1}(1)).$$
	Thus, $-K_Z$ is ample, that is $Z$ is Fano.
\end{remark}

From here on, $Z$ denotes the Bott-Samelson variety
	$Z(\alpha_1,\alpha_2,\alpha_1)$ in type $A_2$. In order to compute a
	presentation for the small quantum cohomology $QH^*(Z)$, it suffices to
	quantize the relations in the ordinary cohomology $H^*(Z)$; see
	\cite[Proposition 11]{FultonPandharipande} for more details. Since the
	relations in $H^*(Z)$ are all in codimension one, and since it is necessary
	for the approach we use, we will compute all products $\sigma_{(j)}\ast
	\sigma_{\varepsilon}$. We organize this data in what we call Chevalley
	matrices. 

Since one of the terms of our quantum products will always be a divisor class,
	the divisor axiom will always be used to reduced three-point Gromov-Witten
	invariants to two-point invariants. For two-point invariants, we have the
	following lemma which relates curve neighborhoods to the vanishing of
	Gromov-Witten invariants.

\begin{lemma} \label{curve nbhd technique} Let $X$ be a smooth, projective
	$\C$-variety, $\Omega\subset X$ a closed subvariety, $\gamma\in H^*(X)$, and
	$\beta\in H_2(X)$ an effective curve class. Suppose $$\codim \gamma + \codim
	[\Omega] = \dim X + \int_{\beta} c_1(T_X) - 1$$ (that is, the codimension
	condition is satisfied.) Denote the irreducible components of
	$\Gamma_{\beta}(\Omega)$ by $\Gamma_i$, that is: $$\Gamma_{\beta}(\Omega) =
	\Gamma_1 \cup \Gamma_2 \cup \cdots \cup \Gamma_k.$$ \begin{enumerate} \item
			If $\dim \Gamma_{\beta}(\Omega) < \codim \gamma$, then
		$\GW{\gamma,[\Omega]}{\beta} = 0$.  \item If $\dim \Gamma_{\beta}(\Omega) =
			\codim \gamma$ and $\int_X \gamma\cdot [\Gamma_i] = 0$ for each $1\leq i
			\leq k$, then $\GW{\gamma,[\Omega]}{\beta} = 0$.  \end{enumerate}
\end{lemma}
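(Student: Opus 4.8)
The plan is to realize the two-point Gromov-Witten invariant $\GW{\gamma,[\Omega]}{\beta} = \int_{[\moduli{2}{X}{\beta}]^{\virt}} ev_1^*(\gamma)\cdot ev_2^*([\Omega])$ as an integral over $X$ by pushing forward along $ev_1$, using the projection formula together with the fact that $ev_2^{-1}(\Omega)$ maps into $\Gamma_\beta(\Omega) = ev_1(ev_2^{-1}(\Omega))$. Concretely, first I would note that the codimension hypothesis is exactly the condition under which the invariant could be nonzero, so there is something to prove, and the virtual class $[\moduli{2}{X}{\beta}]^{\virt}$ has (pure) dimension $\dim X + \int_\beta c_1(T_X) - 1$. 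Then $ev_2^*([\Omega])$ is supported on $ev_2^{-1}(\Omega)$, so the product $ev_1^*(\gamma)\cdot ev_2^*([\Omega])$, capped against the virtual class, is a class supported on $ev_2^{-1}(\Omega)$; pushing forward by $ev_1$ gives a class supported on $\Gamma_\beta(\Omega)$. More precisely, by the projection formula $ev_{1*}\big(ev_1^*(\gamma)\cdot ev_2^*([\Omega])\cap [\moduli{2}{X}{\beta}]^{\virt}\big) = \gamma \cap ev_{1*}\big(ev_2^*([\Omega])\cap [\moduli{2}{X}{\beta}]^{\virt}\big)$, and the second factor is a cycle class of dimension $\dim\moduli{2}{X}{\beta}^{\virt} - \codim[\Omega] = \dim X + \int_\beta c_1(T_X) - 1 - \codim[\Omega] = \codim\gamma$ (using the codimension hypothesis), which is supported on $\Gamma_\beta(\Omega)$.

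For part (1): if $\dim \Gamma_\beta(\Omega) < \codim\gamma$, then a cycle of dimension $\codim\gamma$ supported on $\Gamma_\beta(\Omega)$ must be zero, hence the whole integral vanishes. For part (2): when $\dim\Gamma_\beta(\Omega) = \codim\gamma$, the pushforward cycle $ev_{1*}\big(ev_2^*([\Omega])\cap[\moduli{2}{X}{\beta}]^{\virt}\big)$ is a $\Z$-linear combination $\sum_i a_i [\Gamma_i]$ of the fundamental classes of the top-dimensional (i.e. $\codim\gamma$-dimensional) irreducible components of $\Gamma_\beta(\Omega)$ — the lower-dimensional components contribute nothing in this degree. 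Capping with $\gamma$ and integrating gives $\GW{\gamma,[\Omega]}{\beta} = \sum_i a_i \int_X \gamma\cdot[\Gamma_i]$, which vanishes by the hypothesis that each $\int_X\gamma\cdot[\Gamma_i] = 0$.

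I expect the main subtlety to be bookkeeping about the virtual fundamental class: one must use that $[\moduli{2}{X}{\beta}]^{\virt}$ is a well-defined cycle class of the expected dimension and that the projection formula and support arguments are valid at the level of Chow groups (or Borel–Moore homology) even though $\moduli{2}{X}{\beta}$ is singular and reducible. The cleanest way to phrase this is: $ev_2^*([\Omega])\cap[\moduli{2}{X}{\beta}]^{\virt}$ is refined Gysin pullback of $[\Omega]$ along $ev_2$ (or simply a class in $A_*(ev_2^{-1}(\Omega))$ by the module structure over $A^*$), so it lives in $A_{\codim\gamma + \codim[\Omega]-\ldots}(ev_2^{-1}(\Omega))$ and its $ev_{1*}$-image lies in $A_{\codim\gamma}(\Gamma_\beta(\Omega))$; then $A_j(Y) = 0$ for $j > \dim Y$ and $A_{\dim Y}(Y)$ is freely generated by top-dimensional components give parts (1) and (2) respectively. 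A secondary point to state carefully is that the degree-matching in the displayed dimension count uses precisely the assumed codimension condition, so no case analysis on $\beta$ is needed.
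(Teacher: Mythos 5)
Your proposal is correct and follows essentially the same route as the paper's proof: apply the projection formula along $ev_1$, observe that $ev_{1*}\bigl(ev_2^*[\Omega]\cdot[\moduli{2}{X}{\beta}]^{\virt}\bigr)$ is a cycle of dimension $\codim\gamma$ supported on $\Gamma_\beta(\Omega)$, and hence vanishes in case (1) and equals a combination $\sum_i m_i[\Gamma_i]$ of top-dimensional components in case (2). If anything, your phrasing of case (1) via $A_j(Y)=0$ for $j>\dim Y$ is slightly more careful than the paper's, which folds both cases into the single remark that $\int_X\gamma\cdot[\Gamma_i]=0$ for all $i$.
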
 \begin{proof} Using the projection formula, we have \begin{align*}
	\GW{\gamma,[\Omega]}{\beta} &= \int_{[\moduli{2}{X}{\beta}]^{\virt}]}
	ev_1^*(\gamma)\cdot ev_2^*[\Omega]\\ &= \int_X \gamma\cdot
	ev_{1\ast}(ev_2^*[\Omega]\cdot [\moduli{2}{X}{\beta}]^{\virt}) \end{align*}
	where, by definition of curve neighborhoods, $$ev_{1\ast}(ev_2^*[\Omega]\cdot
	[\moduli{2}{X}{\beta}]^{\virt}) = \sum_{i=1}^k m_i\>[\Gamma_i];$$ the
	constants $m_i\geq 0$ are zero when $\dim \Gamma_i < \dim ev_2^*(\Omega)$,
	otherwise they are the degrees of $ev_1$ restriced to each irreducible
	component. 

The desired conclusion follows in both cases since $\int_X \gamma\cdot
[\Gamma_i] = 0$ for all $i$.  \end{proof}

Using the moment graph to compute curve neighborhoods for
$Z(\alpha_1,\alpha_2,\alpha_1)$, and Lemma \ref{curve nbhd technique}, we are
able to show that certain Gromov-Witten invariants vanish.  Combining this with
the explicit moduli space results obtained in Section \ref{moduli space}, we
are able to compute many of the Gromov-Witten invariants needed to compute
$QH^*(Z)$. However, there are still some unknown invariants that are needed.

Using a computer, and the fact that $QH^*(Z)$ is a commutative ring, we are
able to solve for all of the unknowns in the Chevalley matrices. In each of the
following subsections, we record the calculations necessary to produce the
Chevalley matrices, the matrix $A$ corresponding to quantum multiplication by
$\sigma_{100}$, $B$ corresponding to quantum multiplication by $\sigma_{010}$,
and $C$ corresponding to quantum multiplication by $\sigma_{001}$.

\subsection{Chevalley matrix $A$} Given any $\varepsilon\in \{0,1\}^3$, we have
\begin{equation*} \sigma_{100}\ast\sigma_{\varepsilon} =
	\sum_{\beta,\varepsilon'}\GW{\sigma_{100},\sigma_{\varepsilon},[Z_{\varepsilon'}]}{\beta}\>q^{\beta}\sigma_{\varepsilon'}
\end{equation*} We will calculate the quantum product
$\sigma_{100}\ast\sigma_{\varepsilon}$ for each $\varepsilon\in \{0,1\}^3$.
\begin{itemize} \item For $\varepsilon = 000$, since $1 = [Z] \in H^*(Z)$ is
			also the identity in $QH^*(Z)$ we have \begin{equation} \sigma_{100}\ast
				1 = \sigma_{100} \end{equation}

\item For $\varepsilon = 100$, $010$, or $001$, we can use the divisor axiom
	twice to reduce the three-point Gromov-Witten invariants to one-point
		invariants. 

Using the codimension condition, for the curve classes $\beta\neq 0$ which need
		to be considered, $q^{\beta}$ is either degree 1 or 2.  Therefore, the
		curve classes $\beta$ for which
		$\GW{\sigma_{100},\sigma_{\varepsilon},[Z_{\varepsilon'}]}{\beta}$ is
		possibly nonzero are $$\beta_3, \beta_1+\beta_3, 2\beta_3$$ Corollary
		\ref{fibercor} implies $ev:\moduli{1}{Z}{\beta_3}\to Z_{101}$ is an
		isomorphism.  Thus \begin{align*} \GW{[Z_{100}]}{\beta_3} &=
		\int_{Z_{101}}[Z_{100}] = -1\\ \GW{[Z_{010}]}{\beta_3} &=
		\int_{Z_{101}}[Z_{010}] = 1\\ \GW{[Z_{001}]}{\beta_3} &=
		\int_{Z_{101}}[Z_{001}] = 0 \end{align*} We will assign variables for the
		remaining Gromov-Witten invariants; we compute the values of these unknowns
		using brute force after analyzing all three Chevalley matrices.
		$$\GW{[pt]}{\beta_1+\beta_3} = x_1,\quad \GW{[pt]}{2\beta_3} = x_2$$

We can now write the quantum products as follows: \begin{align}
	\sigma_{100}\ast \sigma_{100} &= -q_3 \sigma_{100} + q_3 \sigma_{010} +
	(q_1q_3 x_1 + 4 q_3^2 x_2)\\ \sigma_{100}\ast \sigma_{010} &= \sigma_{110} +
	q_1q_3 x_1\\ \sigma_{100}\ast \sigma_{001} &= \sigma_{101} + q_3 \sigma_{100}
- q_3 \sigma_{010} + (-q_1q_3 x_1 - 4 q_3^2 x_2) \end{align}

\item For $\varepsilon = 110, 101, 011$, we can only use the divisor axiom once
	to reduce the three-point Gromov-Witten invariant to a two-point invariant.
		Moreover, using the codimension condition and the divisor axiom, we can
		reduce the curve classes $\beta$ which need to be considered to the
		following:
		$$\beta_3,\beta_1+\beta_3,2\beta_3,2\beta_1+\beta_3,\beta_1+2\beta_3,\beta_2+\beta_3,3\beta_3$$
		From the codimension condition, if $\ell(\varepsilon) = 2$,
		$\GW{\sigma_{100},\sigma_{\varepsilon},[Z_{\varepsilon'}]}{\beta_3} = 0$
		unless $\ell(\varepsilon') = 2$. Moreover, the curve neighborhoods for
		class $\beta_3$ have been considered in Section \ref{curve nbhds}:
		$$\Gamma_{\beta_3}(Z_{110}) = Z_{101},\quad \Gamma_{\beta_3}(Z_{101}) =
		Z_{101},\quad \Gamma_{\beta_3}(Z_{011}) = Z_{101}$$ Using Lemma \ref{curve
		nbhd technique}, the curve neighborhoods show
		$\GW{\sigma_{\varepsilon},[Z_{\varepsilon'}]}{\beta_3} = 0$ unless
		$\varepsilon = 101$. Thus we have shown that six Gromov-Witten invariants
		vanish, leaving three more (for curve class $\beta_3$) which are unknown.

We assign variables for the remaining Gromov-Witten invariants.
		$$\begin{array}{ccc} \GW{\sigma_{101},[Z_{110}]}{\beta_3} = x_3, &
			\GW{\sigma_{101},[Z_{101}]}{\beta_3} = x_4, &
			\GW{\sigma_{101},[Z_{011}]}{\beta_3} = x_5,\\
			\GW{\sigma_{110},[Z_{100}]}{\beta_1+\beta_3} = x_6,	&
			\GW{\sigma_{110},[Z_{010}]}{\beta_1+\beta_3} = x_7, &
			\GW{\sigma_{110},[Z_{001}]}{\beta_1+\beta_3} = x_8,\\
			\GW{\sigma_{101},[Z_{100}]}{\beta_1+\beta_3} = x_9,	&
			\GW{\sigma_{101},[Z_{010}]}{\beta_1+\beta_3} = x_{10},	&
			\GW{\sigma_{101},[Z_{001}]}{\beta_1+\beta_3} = x_{11},\\
			\GW{\sigma_{011},[Z_{100}]}{\beta_1+\beta_3} = x_{12},	&
			\GW{\sigma_{011},[Z_{010}]}{\beta_1+\beta_3} = x_{13},	&
			\GW{\sigma_{011},[Z_{001}]}{\beta_1+\beta_3} = x_{14},\\
			\GW{\sigma_{110},[Z_{100}]}{2\beta_3} = x_{15},		&
			\GW{\sigma_{110},[Z_{010}]}{2\beta_3} = x_{16}, &
			\GW{\sigma_{110},[Z_{001}]}{2\beta_3} = x_{17},\\
			\GW{\sigma_{101},[Z_{100}]}{2\beta_3} = x_{18},		&
			\GW{\sigma_{101},[Z_{010}]}{2\beta_3} = x_{19}, &
			\GW{\sigma_{101},[Z_{001}]}{2\beta_3} = x_{20},\\
			\GW{\sigma_{011},[Z_{100}]}{2\beta_3} = x_{21},		&
			\GW{\sigma_{011},[Z_{010}]}{2\beta_3} = x_{22}, &
			\GW{\sigma_{011},[Z_{001}]}{2\beta_3} = x_{23},\\
			\GW{\sigma_{110},[pt]}{2\beta_1+\beta_3} = x_{24},		&
			\GW{\sigma_{101},[pt]}{2\beta_1+\beta_3} = x_{25},		&
			\GW{\sigma_{011},[pt]}{2\beta_1+\beta_3} = x_{26},\\
			\GW{\sigma_{110},[pt]}{\beta_1+2\beta_3} = x_{27}, &
			\GW{\sigma_{101},[pt]}{\beta_1+2\beta_3} = x_{28}, &
			\GW{\sigma_{011},[pt]}{\beta_1+2\beta_3} = x_{29},\\
			\GW{\sigma_{110},[pt]}{\beta_2+\beta_3} = x_{30},		&
			\GW{\sigma_{101},[pt]}{\beta_2+\beta_3} = x_{31},			&
			\GW{\sigma_{011},[pt]}{\beta_2+\beta_3} = x_{32},\\
			\GW{\sigma_{110},[pt]}{3\beta_3} = x_{33},			&
			\GW{\sigma_{101},[pt]}{3\beta_3} = x_{34}, &
		\GW{\sigma_{011},[pt]}{3\beta_3} = x_{35} \end{array}$$

We now record these quantum products: \begin{align}
	\sigma_{100}\ast\sigma_{110} 	&= (q_1q_3 x_6 + 2 q_3^2 x_{15})\sigma_{100} +
	(q_1q_3 x_7 + 2 q_3^2 x_{16})\sigma_{010}\\ \nonumber				&+ (q_1q_3 x_8 +
	2 q_3^2 x_{17})\sigma_{001} + (q_1^2q_3 x_{24} + 2 q_1q_3^2 x_{27} + q_2q_3
	x_{30} + 3 q_3^3 x_{33})\\ \sigma_{100}\ast \sigma_{101} 	&= q_3 x_3
	\sigma_{110} + q_3 x_4 \sigma_{101} + q_3 x_5 \sigma_{011} + (q_1q_3 x_9 + 2
	q_3^2 x_{18})\sigma_{100}\\ \nonumber &+ (q_1q_3 x_{10} + 2 q_3^2
	x_{19})\sigma_{010} + (q_1q_3 x_{11} + 2 q_3^2 x_{20})\sigma_{001}\\
	\nonumber				&+ q_1^2q_3 x_{25} + 2 q_1q_3^2 x_{28} + q_2q_3 x_{31} + 3
	q_3^3 x_{34}\\ \sigma_{100}\ast \sigma_{011}	&= [pt] + (q_1q_3 x_{12} + 2
	q_3^2 x_{21})\sigma_{100} + (q_1q_3 x_{13} + 2 q_3^2 x_{22})\sigma_{010} +
(q_1q_3 x_{14} + 2 q_3^2 x_{23})\sigma_{001}\\ \nonumber &+ q_1^2q_3 x_{26} + 2
q_1q_3^2 x_{29} + q_2q_3 x_{32} + 3 q_3^3 x_{35} \end{align}

\item For $\varepsilon = 111$, the curve classes which possibly contribute with
	non-zero Gromov-Witten invariants are \begin{align*}
		\beta_3,\beta_1+\beta_3,2\beta_3,2\beta_1+\beta_3,\beta_1+2\beta_3,\beta_2+\beta_3,3\beta_3,\\
		3\beta_1+\beta_3,2\beta_1+2\beta_3, \beta_1+3\beta_3, \beta_2+2\beta_3,
	4\beta_3, \beta_1+\beta_2+\beta_3 \end{align*} Using the fundamental class
	axiom for Gromov-Witten invariants, $$\GW{\sigma_{100},[pt],[Z]}{\beta_3} =
	0.$$ We assign variables for the remaining Gromov-Witten invariants.
	$$\begin{array}{ccc} \GW{[pt],[Z_{110}]}{\beta_1+\beta_3} = x_{36}, &
		\GW{[pt],[Z_{101}]}{\beta_1+\beta_3} = x_{37}, &
		\GW{[pt],[Z_{011}]}{\beta_1+\beta_3} = x_{38},\\
		\GW{[pt],[Z_{110}]}{2\beta_3} = x_{39},			& \GW{[pt],[Z_{101}]}{2\beta_3}
		= x_{40}, & \GW{[pt],[Z_{011}]}{2\beta_3} = x_{41},\\
		\GW{[pt],[Z_{100}]}{2\beta_1+\beta_3} = x_{42},	&
		\GW{[pt],[Z_{010}]}{2\beta_1+\beta_3} = x_{43},		&
		\GW{[pt],[Z_{001}]}{2\beta_1+\beta_3} = x_{44},\\
		\GW{[pt],[Z_{100}]}{\beta_1+2\beta_3} = x_{45},	&
		\GW{[pt],[Z_{010}]}{\beta_1+2\beta_3} = x_{46},		&
		\GW{[pt],[Z_{001}]}{\beta_1+2\beta_3} = x_{47},\\
		\GW{[pt],[Z_{100}]}{\beta_2+\beta_3} = x_{48},		&
		\GW{[pt],[Z_{010}]}{\beta_2+\beta_3} = x_{49},		&
		\GW{[pt],[Z_{001}]}{\beta_2+\beta_3} = x_{50},\\
		\GW{[pt],[Z_{100}]}{3\beta_3} = x_{51},			& \GW{[pt],[Z_{010}]}{3\beta_3}
		= x_{52}, & \GW{[pt],[Z_{001}]}{3\beta_3} = x_{53},\\
		\GW{[pt],[pt]}{3\beta_1+\beta_3} = x_{54}, &
		\GW{[pt],[pt]}{2\beta_1+2\beta_3} = x_{55}, &
		\GW{[pt],[pt]}{\beta_1+3\beta_3} = x_{56},\\
	\GW{[pt],[pt]}{\beta_2+2\beta_3} = x_{57}, & \GW{[pt],[pt]}{4\beta_3} =
	x_{58}, & \GW{[pt],[pt]}{\beta_1+\beta_2+\beta_3} = x_{59} \end{array}$$

We now record the final quantum product for the Chevalley matrix $A$:
\begin{align} \sigma_{100}\ast [pt]	&= (q_1q_3 x_{36} + 2 q_3^2
	x_{39})\sigma_{110} + (q_1q_3 x_{37} + 2 q_3^2 x_{40})\sigma_{101} + (q_1q_3
	x_{38} + 2 q_3^2 x_{41})\sigma_{011}\\ \nonumber &+ (q_1^2q_3 x_{42} + 2
	q_1q_3^2 x_{45} + q_2q_3 x_{48} + 3 q_3^3 x_{51})\sigma_{100}\\ \nonumber &+
	(q_1^2q_3 x_{43} + 2 q_1q_3^2 x_{46} + q_2q_3 x_{49} + 3 q_3^3
	x_{52})\sigma_{010}\\ \nonumber		&+ (q_1^2q_3 x_{44} + 2 q_1q_3^2 x_{47} +
	q_2q_3 x_{50} + 3 q_3^3 x_{53})\sigma_{010}\\ \nonumber		&+ q_1^3q_3 x_{54}
+ 2 q_1^2q_3^2 x_{55} + 3 q_1q_3^3 x_{56} + 2 q_2q_3^2 x_{57} + 4 q_3^4 x_{58}
+ q_1q_2q_3 x_{59} \end{align} \end{itemize}

\subsection{Chevalley matrix $B$} As in the previous section, we can write the
quantum product as $$\sigma_{010}\ast \sigma_{\varepsilon} =
\sum_{\beta,\varepsilon'}
\GW{\sigma_{010},\sigma_{\varepsilon},[Z_{\varepsilon'}]}{\beta}\> q^{\beta}
\sigma_{\varepsilon'}$$

\begin{itemize} \item As before, $1 = [Z]\in H^*(Z)$ is also the identity in
			$QH^*(Z)$. Therefore, \begin{equation} \sigma_{010}\ast 1 = \sigma_{010}
			\end{equation}

\item For $\varepsilon = 100,010,001$, we can use the divisor axiom twice to
	reduce the three-point Gromov-Witten invariants to one-point invariants. The
		curve classes which give possibly non-zero Gromov-Witten invariants are
		$$\beta_1, \beta_1+\beta_3, 2\beta_1$$ We previously defined
		$\GW{[pt]}{\beta_1+\beta_3} = x_1$, so we only need the following new
		invariants: $$\begin{array}{cccc} \GW{[Z_{100}]}{\beta_1} = y_1, &
			\GW{[Z_{010}]}{\beta_1} = y_2,		& \GW{[Z_{001}]}{\beta_1} = y_3,		&
		\GW{[pt]}{2\beta_1} = y_4 \end{array}$$ We can then write the quantum
		products as \begin{align} \sigma_{010}\ast \sigma_{100} &= \sigma_{110} +
			q_1q_3 x_1\\ \sigma_{010}\ast \sigma_{010} &= \sigma_{110} + q_1 y_1
		\sigma_{100} + q_1 y_2 \sigma_{010} + q_1 y_3 \sigma_{001} + (q_1q_3 x_1 +
		2 q_1^2 y_4)\\ \sigma_{010}\ast \sigma_{001} &= \sigma_{011} - q_1q_3 x_1
		\end{align}

\item For $\varepsilon = 110,101,011$, we use the divisor axiom to reduce the
	three-point Gromov-Witten invariants to two-point invariants. The curve
		classes which give possibly non-zero Gromov-Witten invariants are
		$$\beta_1,\beta_1+\beta_3,2\beta_1,2\beta_1+\beta_3,\beta_1+2\beta_3,\beta_1+\beta_2,3\beta_1$$
		We assign variables for the remaining Gromov-Witten invariants (note: many
		of these Gromov-Witten invariants were already considered in the Chevalley
		matrix $A$.) $$\begin{array}{ccc} \GW{\sigma_{110},[Z_{110}]}{\beta_1} =
			y_5, & \GW{\sigma_{110},[Z_{101}]}{\beta_1} = y_6, &
			\GW{\sigma_{110},[Z_{011}]}{\beta_1} = y_7,\\
			\GW{\sigma_{101},[Z_{110}]}{\beta_1} = y_8, &
			\GW{\sigma_{101},[Z_{101}]}{\beta_1} = y_9, &
			\GW{\sigma_{101},[Z_{011}]}{\beta_1} = y_{10},\\
			\GW{\sigma_{011},[Z_{110}]}{\beta_1} = y_{11}, &
			\GW{\sigma_{011},[Z_{101}]}{\beta_1} = y_{12}, &
			\GW{\sigma_{011},[Z_{011}]}{\beta_1} = y_{13},\\
			\GW{\sigma_{110},[Z_{100}]}{\beta_1+\beta_3} = x_6, &
			\GW{\sigma_{110},[Z_{010}]}{\beta_1+\beta_3} = x_7, &
			\GW{\sigma_{110},[Z_{001}]}{\beta_1+\beta_3} = x_8,\\
			\GW{\sigma_{101},[Z_{100}]}{\beta_1+\beta_3} = x_9, &
			\GW{\sigma_{101},[Z_{010}]}{\beta_1+\beta_3} = x_{10},	&
			\GW{\sigma_{101},[Z_{001}]}{\beta_1+\beta_3} = x_{11},\\
			\GW{\sigma_{011},[Z_{100}]}{\beta_1+\beta_3} = x_{12}, &
			\GW{\sigma_{011},[Z_{010}]}{\beta_1+\beta_3} = x_{13},	&
			\GW{\sigma_{011},[Z_{001}]}{\beta_1+\beta_3} = x_{14},\\
			\GW{\sigma_{110},[Z_{100}]}{2\beta_1} = y_{14},		&
			\GW{\sigma_{110},[Z_{010}]}{2\beta_1} = y_{15},			&
			\GW{\sigma_{110},[Z_{001}]}{2\beta_1} = y_{16},\\
			\GW{\sigma_{101},[Z_{100}]}{2\beta_1} = y_{17}, &
			\GW{\sigma_{101},[Z_{010}]}{2\beta_1} = y_{18}, &
			\GW{\sigma_{101},[Z_{001}]}{2\beta_1} = y_{19},\\
			\GW{\sigma_{011},[Z_{100}]}{2\beta_1} = y_{20}, &
			\GW{\sigma_{011},[Z_{010}]}{2\beta_1} = y_{21}, &
			\GW{\sigma_{011},[Z_{001}]}{2\beta_1} = y_{22},\\
			\GW{\sigma_{110},[pt]}{2\beta_1+\beta_3} = x_{24}, &
			\GW{\sigma_{101},[pt]}{2\beta_1+\beta_3} = x_{25}, &
			\GW{\sigma_{011},[pt]}{2\beta_1+\beta_3} = x_{26},\\
			\GW{\sigma_{110},[pt]}{\beta_1+2\beta_3} = x_{27}, &
			\GW{\sigma_{101},[pt]}{\beta_1+2\beta_3} = x_{28}, &
			\GW{\sigma_{011},[pt]}{\beta_1+2\beta_3} = x_{29},\\
			\GW{\sigma_{110},[pt]}{\beta_1+\beta_2} = y_{23}, &
			\GW{\sigma_{101},[pt]}{\beta_1+\beta_2} = y_{24}, &
			\GW{\sigma_{011},[pt]}{\beta_1+\beta_2} = y_{25},\\
			\GW{\sigma_{110},[pt]}{3\beta_1} = y_{26}, &
		\GW{\sigma_{101},[pt]}{3\beta_1} = y_{27}, &
		\GW{\sigma_{011},[pt]}{3\beta_1} = y_{28}\\ \end{array}$$ We can then write
		the quantum products: \begin{align} \sigma_{010}\ast\sigma_{110}	&= q_1
			y_5 \sigma_{110} + q_1 y_6 \sigma_{101} + q_1 y_7 \sigma_{011} + (q_1q_3
			x_6 + 2 q_1^2 y_{14})\sigma_{100}\\ \nonumber &+ (q_1q_3 x_7 + 2 q_1^2
			y_{15})\sigma_{010} + (q_1q_3 x_8 + 2 q_1^2 y_{16})\sigma_{001}\\
			\nonumber &+ 2 q_1^2q_3 x_{24} + q_1q_3^2 x_{27} + q_1q_2 y_{23} + 3
			q_1^3 y_{26}\\ \sigma_{010}\ast\sigma_{101}	&= [pt] + q_1 y_8
			\sigma_{110} + q_1 y_9 \sigma_{101} + q_1 y_{10} \sigma_{011} + (q_1q_3
			x_9 + 2 q_1^2 y_{17})\sigma_{100}\\ \nonumber &+ (q_1q_3 x_{10} + 2 q_1^2
			y_{18})\sigma_{010} + (q_1q_3 x_{11} + 2 q_1^2 y_{19})\sigma_{001}\\
			\nonumber				&+ 2 q_1^2q_3 x_{25} + q_1q_3^2 x_{28} + q_1q_2 y_{24} +
			3 q_1^3 y_{27}\\ \sigma_{010}\ast\sigma_{011}	&= [pt] + q_1 y_{11}
			\sigma_{110} + q_1 y_{12} \sigma_{101} + q_1 y_{13} \sigma_{011} +
			(q_1q_3 x_{12} + 2 q_1^2 y_{20})\sigma_{100}\\ \nonumber &+ (q_1q_3
		x_{13} + 2 q_1^2 y_{21})\sigma_{010} + (q_1q_3 x_{14} + 2 q_1^2
		y_{22})\sigma_{001}\\ \nonumber &+ 2 q_1^2q_3 x_{26} + q_1q_3^2 x_{29} +
		q_1q_2 y_{25} + 3 q_1^3 y_{28} \end{align}

\item For $\varepsilon = 111$, the curve classes which contribute with possibly
	non-zero Gromov-Witten invariants are \begin{align*}
		\beta_1,\beta_1+\beta_3,2\beta_1,2\beta_1+\beta_3,\beta_1+2\beta_3,\beta_1+\beta_2,3\beta_1\\
		3\beta_1+\beta_3,2\beta_1+2\beta_3,\beta_1+3\beta_3,2\beta_1+\beta_2,4\beta_1,\beta_1+\beta_2+\beta_3
	\end{align*} Using the fundamental class axiom,
	$\GW{\sigma_{010},[pt],[Z]}{\beta_1} = 0$. We record the remaining
	Gromov-Witten invariants.  $$\begin{array}{ccc}
		\GW{[pt],[Z_{110}]}{\beta_1+\beta_3} = x_{36},		&
		\GW{[pt],[Z_{101}]}{\beta_1+\beta_3} = x_{37},		&
		\GW{[pt],[Z_{011}]}{\beta_1+\beta_3} = x_{38},\\
		\GW{[pt],[Z_{110}]}{2\beta_1} = y_{29},			& \GW{[pt],[Z_{101}]}{2\beta_1}
		= y_{30}, & \GW{[pt],[Z_{011}]}{2\beta_1} = y_{31},\\
		\GW{[pt],[Z_{100}]}{2\beta_1+\beta_3} = x_{42},	&
		\GW{[pt],[Z_{010}]}{2\beta_1+\beta_3} = x_{43},		&
		\GW{[pt],[Z_{001}]}{2\beta_1+\beta_3} = x_{44},\\
		\GW{[pt],[Z_{100}]}{\beta_1+2\beta_3} = x_{45},	&
		\GW{[pt],[Z_{010}]}{\beta_1+2\beta_3} = x_{46},		&
		\GW{[pt],[Z_{001}]}{\beta_1+2\beta_3} = x_{47},\\
		\GW{[pt],[Z_{100}]}{\beta_1+\beta_2} = y_{32},		&
		\GW{[pt],[Z_{010}]}{\beta_1+\beta_2} = y_{33},		&
		\GW{[pt],[Z_{001}]}{\beta_1+\beta_2} = y_{34},\\
		\GW{[pt],[Z_{100}]}{3\beta_1} = y_{35},			& \GW{[pt],[Z_{010}]}{3\beta_1}
		= y_{36}, & \GW{[pt],[Z_{001}]}{3\beta_1} = y_{37},\\
		\GW{[pt],[pt]}{3\beta_1+\beta_3} = x_{54}, &
		\GW{[pt],[pt]}{2\beta_1+2\beta_3} = x_{55}, &
		\GW{[pt],[pt]}{\beta_1+3\beta_3} = x_{56},\\
		\GW{[pt],[pt]}{2\beta_1+\beta_2} = y_{38}, & \GW{[pt],[pt]}{4\beta_1} =
		y_{39}, & \GW{[pt],[pt]}{\beta_1+\beta_2+\beta_3} = x_{59} \end{array}$$ We
		record the final quantum product for the matrix $B$: \begin{align}
			\sigma_{010}\ast[pt]	&= (q_1q_3 x_{36} + 2 q_1^2 y_{29})\sigma_{110} +
			(q_1q_3 x_{37} + 2 q_1^2 y_{30})\sigma_{101} + (q_1q_3 x_{38} + 2 q_1^2
			y_{31})\sigma_{011}\\ \nonumber		&+ (2 q_1^2q_3 x_{42} + q_1q_3^2 x_{45}
			+ q_1q_2 y_{32} + 3 q_1^3 y_{35})\sigma_{100}\\ \nonumber		&+ (2
			q_1^2q_3 x_{43} + q_1q_3^2 x_{46} + q_1q_2 y_{33} + 3 q_1^3
			y_{36})\sigma_{010}\\ \nonumber &+ (2 q_1^2q_3 x_{44} + q_1q_3^2 x_{47} +
			q_1q_2 y_{34} + 3 q_1^3 y_{37})\sigma_{001}\\ \nonumber		&+ 3 q_1^3q_3
			x_{54} + 2 q_1^2q_3^2 x_{55} + q_1q_3^3 x_{56} + 2 q_1^2q_2 y_{38} + 4
		q_1^4 y_{39} + q_1q_2q_3 x_{59} \end{align} \end{itemize}

\subsection{Chevalley matrix $C$} As in the previous two sections, we can
express the quantum product as follows $$\sigma_{001}\ast\sigma_{\varepsilon} =
\sum_{\beta,\varepsilon'}
\GW{\sigma_{001},\sigma_{\varepsilon},[Z_{\varepsilon'}]}{\beta}\> q^{\beta}
\sigma_{\varepsilon'}$$

\begin{itemize} \item For $\varepsilon = 000$, since $1 = [Z]\in H^*(Z)$ is
			also the identity in $QH^*(Z)$, we have \begin{align} \sigma_{001}\ast 1
			= \sigma_{001} \end{align}

\item For $\varepsilon = 100,010,001$, we use the divisor axiom twice to reduce
	three-point Gromov-Witten invariants to one-point invariants. The applicable
		curve classes are $$\beta_3, \beta_1+\beta_3, \beta_2, 2\beta_3$$ In the
		first subsection, we observed that the one-point invariants for curve class
		$\beta_3$ are explicitly computable: \begin{align*} \GW{[Z_{100}]}{\beta_3}
			&= \int_{Z_{101}}[Z_{100}] = -1\\ \GW{[Z_{010}]}{\beta_3} &=
			\int_{Z_{101}}[Z_{010}] = 1\\ \GW{[Z_{001}]}{\beta_3} &=
		\int_{Z_{101}}[Z_{001}] = 0 \end{align*} In fact, all the relevant
		Gromov-Witten invariants (except $\GW{[pt]}{\beta_2}$) for these curve
		classes were already considered in the subsection for matrix $A$.  Using
		Corollary \ref{fibercor}, we compute $\GW{[pt]}{\beta_2} = 1$. We record
		the quantum products \begin{align} \sigma_{001}\ast\sigma_{100}	&=
			\sigma_{101} + q_3 \sigma_{100} - q_3 \sigma_{010} + (-q_1q_3 x_1 - 4
			q_3^2 x_2)\\ \sigma_{001}\ast\sigma_{010}	&= \sigma_{011} - q_1q_3 x_1\\
			\sigma_{001}\ast\sigma_{001}	&= \sigma_{011} - 2 \sigma_{101} - q_3
			\sigma_{100} + q_3 \sigma_{010} + (q_1q_3 x_1 + 4 q_3^2 x_2 + q_2)
		\end{align}

\item For $\varepsilon = 110,101,011$, we use the divisor axiom to reduce
	three-point Gromov-Witten invariants to two-point invariants. The applicable
		curve classes are very similar to those in matrix $A$, however
		$\beta_2+\beta_3 = [Z_{100}]$. So, by the divisor axiom
		$\GW{\sigma_{001},\sigma_{\varepsilon},[Z_{\varepsilon'}]}{\beta_2+\beta_3}
		= 0$. The applicable curve classes are
		$$\beta_3,\beta_1+\beta_3,\beta_2,2\beta_3,2\beta_1+\beta_3,\beta_1+2\beta_3,\beta_1+\beta_2,3\beta_3$$
		Recall, curve neighborhoods are used to show
		$\GW{\sigma_{\varepsilon},[Z_{\varepsilon'}]}{\beta_3} = 0$ unless
		$\varepsilon = 101$. We record the remaining Gromov-Witten invariants here
		$$\begin{array}{ccc} \GW{\sigma_{101},[Z_{110}]}{\beta_3} = x_3, &
			\GW{\sigma_{101},[Z_{101}]}{\beta_3} = x_4, &
			\GW{\sigma_{101},[Z_{011}]}{\beta_3} = x_5,\\
			\GW{\sigma_{110},[Z_{100}]}{\beta_1+\beta_3} = x_6, &
			\GW{\sigma_{110},[Z_{010}]}{\beta_1+\beta_3} = x_7, &
			\GW{\sigma_{110},[Z_{001}]}{\beta_1+\beta_3} = x_8,\\
			\GW{\sigma_{101},[Z_{100}]}{\beta_1+\beta_3} = x_9, &
			\GW{\sigma_{101},[Z_{010}]}{\beta_1+\beta_3} = x_{10},	&
			\GW{\sigma_{101},[Z_{001}]}{\beta_1+\beta_3} = x_{11},\\
			\GW{\sigma_{011},[Z_{100}]}{\beta_1+\beta_3} = x_{12}, &
			\GW{\sigma_{011},[Z_{010}]}{\beta_1+\beta_3} = x_{13},	&
			\GW{\sigma_{011},[Z_{001}]}{\beta_1+\beta_3} = x_{14},\\
			\GW{\sigma_{110},[Z_{100}]}{\beta_2} = z_1, &
			\GW{\sigma_{110},[Z_{010}]}{\beta_2} = z_2, &
			\GW{\sigma_{110},[Z_{001}]}{\beta_2} = z_3,\\
			\GW{\sigma_{101},[Z_{100}]}{\beta_2} = z_4, &
			\GW{\sigma_{101},[Z_{010}]}{\beta_2} = z_5, &
			\GW{\sigma_{101},[Z_{001}]}{\beta_2} = z_6,\\
			\GW{\sigma_{011},[Z_{100}]}{\beta_2} = z_7, &
			\GW{\sigma_{011},[Z_{010}]}{\beta_2} = z_8, &
			\GW{\sigma_{011},[Z_{001}]}{\beta_2} = z_9,\\
			\GW{\sigma_{110},[Z_{100}]}{2\beta_3} = x_{15}, &
			\GW{\sigma_{110},[Z_{010}]}{2\beta_3} = x_{16}, &
			\GW{\sigma_{110},[Z_{001}]}{2\beta_3} = x_{17},\\
			\GW{\sigma_{101},[Z_{100}]}{2\beta_3} = x_{18}, &
			\GW{\sigma_{101},[Z_{010}]}{2\beta_3} = x_{19}, &
			\GW{\sigma_{101},[Z_{001}]}{2\beta_3} = x_{20},\\
			\GW{\sigma_{011},[Z_{100}]}{2\beta_3} = x_{21}, &
			\GW{\sigma_{011},[Z_{010}]}{2\beta_3} = x_{22}, &
			\GW{\sigma_{011},[Z_{001}]}{2\beta_3} = x_{23},\\
			\GW{\sigma_{110},[pt]}{2\beta_1+\beta_3} = x_{24}, &
			\GW{\sigma_{101},[pt]}{2\beta_1+\beta_3} = x_{25}, &
			\GW{\sigma_{011},[pt]}{2\beta_1+\beta_3} = x_{26},\\
			\GW{\sigma_{110},[pt]}{\beta_1+2\beta_3} = x_{27}, &
			\GW{\sigma_{101},[pt]}{\beta_1+2\beta_3} = x_{28}, &
			\GW{\sigma_{011},[pt]}{\beta_1+2\beta_3} = x_{29},\\
			\GW{\sigma_{110},[pt]}{\beta_1+\beta_2} = y_{23}, &
			\GW{\sigma_{101},[pt]}{\beta_1+\beta_2} = y_{24}, &
		\GW{\sigma_{011},[pt]}{\beta_1+\beta_2} = y_{25},\\
		\GW{\sigma_{110},[pt]}{3\beta_3} = x_{33}, &
		\GW{\sigma_{101},[pt]}{3\beta_3} = x_{34}, &
		\GW{\sigma_{011},[pt]}{3\beta_3} = x_{35} \end{array}$$ And the quantum
		products are \begin{align} \sigma_{001}\ast\sigma_{110}	&= [pt] + (-q_1q_3
			x_6 + q_2 z_1 - 2 q_3^2 x_{15})\sigma_{100} + (-q_1q_3 x_7 + q_2 z_2 - 2
			q_3^2 x_{16})\sigma_{010}\\ \nonumber &+ (-q_1q_3 x_8 + q_2 z_3 - 2 q_3^2
			x_{17})\sigma_{001} + (- q_1^2q_3 x_{24} - 2 q_1q_3^2 x_{27} + q_1q_2
			y_{23} - 3 q_3^3 x_{33})\\ \sigma_{001}\ast\sigma_{101} &= [pt] - q_3 x_3
			\sigma_{110} - q_3 x_4 \sigma_{101} - q_3 x_5 \sigma_{011}\\ \nonumber &+
			(-q_1q_3 x_9 + q_2 z_4 - 2 q_3^2 x_{18})\sigma_{100} + (-q_1q_3 x_{10} +
			q_2 z_5 - 2 q_3^2 x_{19})\sigma_{010}\\ \nonumber &+ (-q_1q_3 x_{11} +
			q_2 z_6 - 2 q_3^2 x_{20})\sigma_{001} + (- q_1^2q_3 x_{25} - 2 q_1q_3^2
			x_{28} + q_1q_2 y_{24} - 3 q_3^3 x_{34})\\ \sigma_{001}\ast\sigma_{011}
			&= -[pt] + (-q_1q_3 x_{12} + q_2 z_7 - 2 q_3^2 x_{21})\sigma_{100} +
			(-q_1q_3 x_{13} + q_2 z_8 - 2 q_3^2 x_{22})\sigma_{010}\\ \nonumber &+
		(-q_1q_3 x_{14} + q_2 z_9 - 2 q_3^2 x_{23})\sigma_{001} + (- q_1^2q_3
		x_{26} - 2 q_1q_3^2 x_{29} + q_1q_2 y_{25} - 3 q_3^3 x_{35}) \end{align}

\item For $\varepsilon = 111$, the curve classes which contribute with possibly
	nonzero Gromov-Witten invariants are: \begin{align*}
		\beta_3,\beta_1+\beta_3,\beta_2,2\beta_3,2\beta_1+\beta_3,\beta_1+2\beta_3,\beta_1+\beta_2,3\beta_3\\
		3\beta_1+\beta_3, 2\beta_1+2\beta_3, \beta_1+3\beta_3, 2\beta_2,
	\beta_2+2\beta_3, 4\beta_3 \end{align*} Using the fundamental class axiom,
	$\GW{\sigma_{001},[pt],[Z]}{\beta_3} = 0$. We record the remaining unknown
	Gromov-Witten invariants.  $$\begin{array}{ccc}
		\GW{[pt],[Z_{110}]}{\beta_1+\beta_3} = x_{36},		&
		\GW{[pt],[Z_{101}]}{\beta_1+\beta_3} = x_{37},		&
		\GW{[pt],[Z_{011}]}{\beta_1+\beta_3} = x_{38},\\
		\GW{[pt],[Z_{110}]}{\beta_2} = z_{10}, & \GW{[pt],[Z_{101}]}{\beta_2} =
		z_{11}, & \GW{[pt],[Z_{011}]}{\beta_2} = z_{12},\\
		\GW{[pt],[Z_{110}]}{2\beta_3} = x_{39}, & \GW{[pt],[Z_{101}]}{2\beta_3} =
		x_{40}, & \GW{[pt],[Z_{011}]}{2\beta_3} = x_{41},\\
		\GW{[pt],[Z_{100}]}{2\beta_1+\beta_3} = x_{42},	&
		\GW{[pt],[Z_{010}]}{2\beta_1+\beta_3} = x_{43}, &
		\GW{[pt],[Z_{001}]}{2\beta_1+\beta_3} = x_{44},\\
		\GW{[pt],[Z_{100}]}{\beta_1+2\beta_3} = x_{45},	&
		\GW{[pt],[Z_{010}]}{\beta_1+2\beta_3} = x_{46}, &
		\GW{[pt],[Z_{001}]}{\beta_1+2\beta_3} = x_{47},\\
		\GW{[pt],[Z_{100}]}{\beta_1+\beta_2} = y_{32}, &
		\GW{[pt],[Z_{010}]}{\beta_1+\beta_2} = y_{33}, &
		\GW{[pt],[Z_{001}]}{\beta_1+\beta_2} = y_{34},\\
		\GW{[pt],[Z_{100}]}{3\beta_3} = x_{51}, & \GW{[pt],[Z_{010}]}{3\beta_3} =
		x_{52}, & \GW{[pt],[Z_{001}]}{3\beta_3} = x_{53},\\
		\GW{[pt],[pt]}{3\beta_1+\beta_3} = x_{54}, &
	\GW{[pt],[pt]}{2\beta_1+2\beta_3} = x_{55}, &
	\GW{[pt],[pt]}{\beta_1+3\beta_3} = x_{56},\\ \GW{[pt],[pt]}{2\beta_2} =
	z_{13}, & \GW{[pt],[pt]}{\beta_2+2\beta_3} = x_{57}, &
	\GW{[pt],[pt]}{4\beta_3} = x_{58} \end{array}$$ The final quantum product is
	\begin{align} \sigma_{001}\ast[pt]	&= (-q_1q_3 x_{36} + q_2 z_{10} - 2 q_3^2
		x_{39})\sigma_{110} + (-q_1q_3 x_{37} + q_2 z_{11} - 2 q_3^2
		x_{40})\sigma_{101}\\ \nonumber		&+ (-q_1q_3 x_{38} + q_2 z_{12} - 2 q_3^2
		x_{41})\sigma_{011} + (-q_1^2q_3 x_{42} - 2 q_1q_3^2 x_{45} + q_1q_2 y_{32}
		- 3 q_3^3 x_{51})\sigma_{100}\\ \nonumber		&+ (-q_1^2q_3 x_{43} - 2
		q_1q_3^2 x_{46} + q_1q_2 y_{33} - 3 q_3^3 x_{52})\sigma_{010}\\ \nonumber
		&+ (-q_1^2q_3 x_{44} - 2 q_1q_3^2 x_{47} + q_1q_2 y_{34} - 3 q_3^3
		x_{53})\sigma_{001}\\ \nonumber		&+ -q_1^3q_3 x_{54} - 2 q_1^2q_3^2 x_{55}
		- 3 q_1q_3^3 x_{56} + 2 q_2^2 z_{13} - 2 q_2q_3^2 x_{57} - 4 q_3^4 x_{58}
	\end{align} \end{itemize}

\subsection{Brute force} The unknown Gromov-Witten invariants can be computed
by imposing the relations $$[A,B] = 0,\quad [A,C] = 0,\quad [B,C] = 0;$$ these
are the relations that quantum multiplication commutes. This gives relations
among the remaining unknown invariants which can then be solved using brute
force. This gives values for all but a single Gromov-Witten invariant $$y_3 =
\GW{[Z_{001}]}{\beta_1}$$

We record the matrices here: ($A$ is the matrix obtained from multiplication by
$\sigma_{100}$, $B$ from multiplication by $\sigma_{010}$, and $C$ from
multiplication by $\sigma_{001}$) $$A = \begin{pmatrix} 0	& q_1q_3y_3	&
	q_1q_3y_3	&	-q_1q_3y_3	& 0			&	0 &	0 &	q_1q_2q_3y_3\\ 1	&	-q_3			& 0
	&	q_3			& q_1q_3y_3	&	q_1q_3y_3	&	0	&	0\\ 0 &	q_3			&	0			& -q_3 &	0
	&	0 &	0	&	0\\ 0	&	0			& 0			&	0			& q_1q_3y_3	& q_1q_3y_3	&	0	&	0\\ 0 &
	0			&	1			& 0			&	0			&	q_3 &	0 &	0\\ 0	&	0			& 0			&	1			&	0 &
-q_3			&	0	&	0\\ 0	& 0			&	0 &	0 &	0			&	q_3			&	0 &	q_1q_3y_3\\ 0	&	0
&	0 &	0			&	0			& 0 &	1	&	0 \end{pmatrix}$$ $$B = \begin{pmatrix} 0	&
	q_1q_3y_3	& q_1q_3y_3	&	-q_1q_3y_3	& 0			&	0			& q_1q_2y_3	&
	q_1q_2q_3y_3\\ 0	&	0 &	2q_1y_3		&	0			& q_1q_3y_3	& q_1q_3y_3	&	0 &
	q_1q_2y_3\\ 1	&	0			&	-q_1y_3 &	0			&	0 & 0			&	0			&	0\\ 0 &	0			&
	q_1y_3		&	0 &	q_1q_3y_3	& q_1q_3y_3	&	0 &	0\\ 0	&	1			&	1 &	0			&
	-q_1y_3		&	0 &	0 &	0\\ 0	&	0 &	0			&	0			& q_1y_3		&	0			&	0 &	0\\ 0
&	0 &	0 &	1			&	0			& 0			&	0			& q_1q_3y_3\\ 0	&	0			&	0 &	0			&	0
&	1 &	1			&	0 \end{pmatrix}$$ $$C = \begin{pmatrix} 0	&	-q_1q_3y_3	&
	-q_1q_3y_3	& q_1q_3y_3+q_2	& 0			&	0 &	q_1q_2y_3	&	0\\ 0	&	q_3 &	0			&
	-q_3 & -q_1q_3y_3	&	-q_1q_3y_3+q_2	&	0 &	q_1q_2y_3\\ 0	& -q_3			&	0 &	q_3
	&	0 &	0				&	q_2 &	0\\ 1	& 0			&	0 &	0				&	-q_1q_3y_3	& -q_1q_3y_3
	&	0 &	0\\ 0 &	0			&	0			& 0				&	0			& -q_3 &	0			& q_2\\ 0	&	1
	&	0 &	-2				&	0 &	q_3 &	0 &	0\\ 0	&	0			&	1 &	1				&	0 &	-q_3
	&	0 &	-q_1q_3y_3\\ 0	&	0			&	0 &	0				&	1 &	1				&	-1 &	0
\end{pmatrix}$$

In order to compute $y_3$, we use \cite[Remark 5.7]{Manolache}. In particular,
this remark immediately implies the following result.

\begin{prop} Let $Z$ denote the Bott-Samelson variety
	$Z(\alpha_1,\alpha_2,\alpha_1)$, and $Z' = Z(\alpha_1,\alpha_2)$.  The
	following commutative diagram is Cartesian: $$\begin{tikzcd}
		\moduli{1}{Z}{\beta_1} \arrow{r}{\overline{\theta}}
		\arrow{d}{\overline{\pi}}	& \moduli{1}{G/B}{[X(s_{\alpha_2})]}
	\arrow{d}{\overline{p_{\alpha_1}}}\\ \moduli{1}{Z'}{[Z'_{01}]}
	\arrow{r}{\overline{p_{\alpha_1}\theta'}}			&
	\moduli{1}{G/P_{\alpha_1}}{[X(s_{\alpha_2})]} \end{tikzcd}$$ In particular,
	since $\overline{p_{\alpha_1}}$ is an isomorphism,
$\overline{\pi}:\moduli{1}{Z}{\beta_1}\to \moduli{1}{Z'}{[Z'_{01}]}$ is also an
isomorphism.  \end{prop}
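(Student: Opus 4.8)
The plan is to deduce this from two ingredients: the fiber-product description of $Z$ recalled in Section~\ref{prelims} --- namely $Z = Z'\times_{G/P_{\alpha_1}}G/B$, the Cartesian square being the one with edges $\pi$, $\theta$, $p_{\alpha_1}\theta'$ and $p_{\alpha_1}$ --- and the functoriality of moduli of genus-$0$ stable maps recorded in \cite[Remark 5.7]{Manolache}.

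First I would record the curve-class bookkeeping. Since $\beta_1 = [Z_{010}]$ and $Z_{010}=s(Z'_{01})$, one has $\pi_*\beta_1=[Z'_{01}]$; since $w(010)=s_{\alpha_2}$ is reduced, $\theta$ restricts to an isomorphism $Z_{010}\xrightarrow{\sim}X(s_{\alpha_2})$ and $\theta_*\beta_1=[X(s_{\alpha_2})]$; and since $\alpha_2\neq\alpha_1$, the element $s_{\alpha_2}$ is the minimal-length representative of the coset $s_{\alpha_2}W_{P_{\alpha_1}}$, so $p_{\alpha_1}$ maps the Schubert curve $X(s_{\alpha_2})\subset G/B$ isomorphically onto a Schubert curve $X(s_{\alpha_2})\subset G/P_{\alpha_1}$, and both $[Z'_{01}]$ and $[X(s_{\alpha_2})]$ push forward to that class. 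Thus the four moduli spaces in the diagram are the correct targets of the induced morphisms. Moreover $\beta_1$ is indecomposable, so every stable map in $\moduli{1}{Z}{\beta_1}$ has smooth irreducible domain $\P^1$ and is non-constant; as its composites with $\pi$ and $\theta$ have nonzero class they are again non-constant, so $\overline{\pi}$, $\overline{\theta}$ (and similarly $\overline{p_{\alpha_1}}$ and $\overline{p_{\alpha_1}\theta'}$) are honest ``post-compose'' morphisms needing no stabilization.

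Next I would apply \cite[Remark 5.7]{Manolache} to this Cartesian square of smooth projective varieties, together with the compatible curve classes above, to conclude that the induced square of one-pointed genus-$0$ moduli spaces is again Cartesian, i.e. $\moduli{1}{Z}{\beta_1} = \moduli{1}{Z'}{[Z'_{01}]}\times_{\moduli{1}{G/P_{\alpha_1}}{[X(s_{\alpha_2})]}}\moduli{1}{G/B}{[X(s_{\alpha_2})]}$. In essence this is the universal property of fiber products: a stable map to $Z'\times_{G/P_{\alpha_1}}G/B$ is a compatible pair of stable maps to $Z'$ and to $G/B$ with matching marked point, and --- since no contracted components occur --- stability on the product is equivalent to stability on each factor. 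I expect this to be the only delicate point: one must check that the hypotheses of \cite[Remark 5.7]{Manolache} genuinely apply here (the stack-theoretic matching of domains, and that no curve classes other than the listed ones contribute), rather than appealing to a vague principle that moduli commutes with fiber products.

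Finally I would note that $\overline{p_{\alpha_1}}$ is an isomorphism, so its base change $\overline{\pi}$ is one too, which is the assertion to prove. Both $G/B$ and $G/P_{\alpha_1}$ are convex, so $\moduli{1}{G/B}{[X(s_{\alpha_2})]}$ and $\moduli{1}{G/P_{\alpha_1}}{[X(s_{\alpha_2})]}$ are smooth, projective and irreducible; as in Corollary~\ref{fibercor}, evaluation identifies $\moduli{1}{G/B}{[X(s_{\alpha_2})]}$ with $G/B$ (the curves of class $[X(s_{\alpha_2})]$ being exactly the $\P^1$-fibers of $p_{\alpha_2}\colon G/B\to G/P_{\alpha_2}$), while $\moduli{1}{G/P_{\alpha_1}}{[X(s_{\alpha_2})]}$ is the variety of pointed Schubert lines in $G/P_{\alpha_1}\cong\P^2$, again the flag variety. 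Under these identifications $\overline{p_{\alpha_1}}$ is a $G$-equivariant self-map of the flag variety, hence the identity; alternatively, since $p_{\alpha_1}$ restricts isomorphically to each curve of class $[X(s_{\alpha_2})]$ and each such curve downstairs lifts uniquely, $\overline{p_{\alpha_1}}$ is a bijective morphism onto a smooth variety, hence an isomorphism.
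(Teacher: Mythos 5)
Your proposal is correct and follows essentially the same route as the paper, which likewise obtains the Cartesian square of moduli spaces directly from \cite[Remark 5.7]{Manolache} applied to the fiber-product presentation $Z = Z'\times_{G/P_{\alpha_1}}G/B$ and then concludes by base change of the isomorphism $\overline{p_{\alpha_1}}$. Your additional bookkeeping (the pushforwards of $\beta_1$, the absence of contracted components since $\beta_1$ is indecomposable, and the identification of both $\moduli{1}{G/B}{[X(s_{\alpha_2})]}$ and $\moduli{1}{G/P_{\alpha_1}}{[X(s_{\alpha_2})]}$ with the flag variety) only makes explicit what the paper leaves to the cited remark.
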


Combined with Corollary \ref{fibercor}, we can compute $y_3$: \begin{align*}
y_3 &= \GW{[Z_{001}]}{\beta_1} = \int ev^*(\sigma_{110})\cdot
[\moduli{1}{Z}{\beta_1}]\\ &= \int ev^*([pt])\cdot [\moduli{1}{Z'}{[Z'_{01}]} =
\int_{Z'}[pt] = 1.  \end{align*}

\begin{prop} The Gromov-Witten invariant $y_3 = \GW{[Z_{001}]}{\beta_1} = 1$.
\end{prop}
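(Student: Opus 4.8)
The plan is to compute $y_3 = \GW{[Z_{001}]}{\beta_1}$ by evaluating it directly on the moduli space, using the unobstructedness results of Section \ref{moduli space} together with the Cartesian square of the proposition immediately above. The first step is to rewrite the insertion. The subvariety $Z_{001}$ is the fibre of $\pi = \pi_3 \colon Z \to Z'$ over the base point, so its Poincar\'e dual is $\pi^*[\mathrm{pt}_{Z'}]$; and since $[\mathrm{pt}_{Z'}] = \sigma_{11} \in H^4(Z')$ and the dual classes $\sigma_\varepsilon$ are preserved under $\pi^*$ (the remark in Section \ref{prelims}), this class is exactly $\sigma_{110} \in H^4(Z)$. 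Because $\beta_1$ is indecomposable, Proposition \ref{expected dimension} gives that $\moduli{1}{Z}{\beta_1}$ is smooth of the expected dimension $2$, so $[\moduli{1}{Z}{\beta_1}]^{\virt}$ is the ordinary fundamental class and $y_3 = \int_{\moduli{1}{Z}{\beta_1}} ev^*(\sigma_{110})$, an integral of a top-degree class.

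The second step is to push this down to $Z'$. Write $\sigma_{110} = \pi^*[\mathrm{pt}_{Z'}]$ as above, so that $ev_Z^*(\sigma_{110}) = (\pi \circ ev_Z)^*[\mathrm{pt}_{Z'}]$. Since evaluation at the marked point commutes with the morphism on moduli spaces induced by $\pi$, we have $\pi \circ ev_Z = ev_{Z'} \circ \overline\pi$, where $\overline\pi \colon \moduli{1}{Z}{\beta_1} \to \moduli{1}{Z'}{[Z'_{01}]}$ is the vertical map of the Cartesian square (note $\pi_*\beta_1 = [Z'_{01}]$). Hence $ev_Z^*(\sigma_{110}) = \overline\pi^*\bigl(ev_{Z'}^*[\mathrm{pt}_{Z'}]\bigr)$.

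The third step is to invoke the two isomorphisms. The proposition above shows $\overline\pi$ is an isomorphism (it is the base change of the isomorphism $\overline{p_{\alpha_1}}$), so integrating a pullback along $\overline\pi$ over $\moduli{1}{Z}{\beta_1}$ equals integrating over $\moduli{1}{Z'}{[Z'_{01}]}$. Moreover $Z' = Z(\alpha_1, \alpha_2)$ is the disjoint union of its curves of class $[Z'_{01}]$, namely the fibres of $Z' \to Z(\alpha_1)$, and $[Z'_{01}]$ is indecomposable, so Corollary \ref{fibercor} makes $ev_{Z'} \colon \moduli{1}{Z'}{[Z'_{01}]} \to Z'$ an isomorphism. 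Chaining everything, $y_3 = \int_{\moduli{1}{Z'}{[Z'_{01}]}} ev_{Z'}^*[\mathrm{pt}_{Z'}] = \int_{Z'}[\mathrm{pt}_{Z'}] = 1$.

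The only delicate point is the first reduction, that the cohomological insertion $[Z_{001}]$ really coincides with $\sigma_{110}$; this rests on the identification of $Z_{001}$ with a $\pi$-fibre and on the stability of the classes $\sigma_\varepsilon$ under $\pi^*$. Everything afterward is formal: the commutativity $\pi \circ ev_Z = ev_{Z'} \circ \overline\pi$ is immediate from the definitions of the evaluation maps, and the two isomorphism statements are quoted directly from the proposition above and from Corollary \ref{fibercor}.
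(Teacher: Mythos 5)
Your proposal is correct and follows essentially the same route as the paper: identify the insertion $[Z_{001}]$ with $\sigma_{110}=\pi^*[\mathrm{pt}_{Z'}]$, transfer the integral along the isomorphism $\overline{\pi}:\moduli{1}{Z}{\beta_1}\to\moduli{1}{Z'}{[Z'_{01}]}$ from the preceding proposition, and finish with Corollary \ref{fibercor} applied to $Z'$. You merely make explicit some steps the paper leaves implicit (the unobstructedness of $\moduli{1}{Z}{\beta_1}$ and the compatibility $\pi\circ ev_Z=ev_{Z'}\circ\overline{\pi}$), which is a welcome clarification but not a different argument.
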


One can read the entire Chevalley formula for $Z=Z(\alpha_1,\alpha_2,\alpha_1)$
from the Chevalley matrices now that $y_3$ has been computed; we record the
presentation for $QH^*(Z)$ so obtained here:

\begin{thm} $QH^*(Z)$ is generated by
	$\sigma_{100},\sigma_{010},\sigma_{001},q_1,q_2,q_3$ subject to the following
	relations: \begin{align*} \sigma_{100}^2 &= q_1q_3 - q_3\sigma_{100} +
		q_3\sigma_{010}\\ \sigma_{010}^2 &= q_1q_3 + 2q_1\sigma_{100} -
		q_1\sigma_{010} + q_1\sigma_{001} + \sigma_{110}\\ \sigma_{001}^2 &= q_1q_3
	+ q_2 - q_3\sigma_{100} + q_3\sigma_{010} - 2\sigma_{101} + \sigma_{011}
	\end{align*} \end{thm}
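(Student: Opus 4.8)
The plan is to derive the presentation from the corresponding one for the ordinary cohomology ring $H^*(Z)$ together with the quantum multiplication table recorded in the Chevalley matrices $A$, $B$, $C$. First I would specialize the presentation of $H^*(Z_\w)$ from Section~\ref{prelims} to $\w = (s_{\alpha_1},s_{\alpha_2},s_{\alpha_1})$ in type $A_2$, where $(\alpha_i,\alpha_i^\vee) = 2$ and $(\alpha_1,\alpha_2^\vee) = (\alpha_2,\alpha_1^\vee) = -1$. Using the transversality relations $\sigma_{110} = \sigma_{100}\sigma_{010}$, $\sigma_{101} = \sigma_{100}\sigma_{001}$, $\sigma_{011} = \sigma_{010}\sigma_{001}$, and $\sigma_{111} = \sigma_{100}\sigma_{010}\sigma_{001}$, one sees that $H^*(Z)$ is generated as a ring by the divisor classes $\sigma_{100},\sigma_{010},\sigma_{001}$ and that the ideal of relations is generated by the three quadratic relations $\sigma_{100}^2 = 0$, $\sigma_{010}^2 = \sigma_{100}\sigma_{010}$, and $\sigma_{001}^2 = -2\sigma_{100}\sigma_{001} + \sigma_{010}\sigma_{001}$. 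Since the Poincar\'e polynomial of $Z$ is $(1+t)^3$, these three relations form a regular sequence, so $H^*(Z)$ is a complete intersection.

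The strategy is then to invoke \cite[Proposition 11]{FultonPandharipande}: because the classical presentation is a complete intersection, any choice of quantum deformations $\widetilde R_1,\widetilde R_2,\widetilde R_3$ of the three relations — that is, relations holding in $QH^*(Z)$ that reduce to the classical ones at $q = 0$ — presents $QH^*(Z)$ as the quotient $\Z[\sigma_{100},\sigma_{010},\sigma_{001},q_1,q_2,q_3]/(\widetilde R_1,\widetilde R_2,\widetilde R_3)$. (The underlying mechanism is the standard one: $QH^*(Z)$ is $q$-torsion-free and positively graded, so any relation reduces modulo $(q_1,q_2,q_3)$ into the classical ideal, hence differs from a combination of the $\widetilde R_i$ by $q$ times a relation, and one iterates degree by degree.) It therefore suffices to identify the deformations, which are read directly off $A$, $B$, $C$ after substituting $y_3 = 1$. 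Ordering the basis as $(1,\sigma_{100},\sigma_{010},\sigma_{001},\sigma_{110},\sigma_{101},\sigma_{011},\sigma_{111})$, the $\sigma_{100}$-column of $A$ gives $\sigma_{100}\ast\sigma_{100} = q_1q_3 - q_3\sigma_{100} + q_3\sigma_{010}$, the $\sigma_{010}$-column of $B$ gives $\sigma_{010}\ast\sigma_{010} = q_1q_3 + 2q_1\sigma_{100} - q_1\sigma_{010} + q_1\sigma_{001} + \sigma_{110}$, and the $\sigma_{001}$-column of $C$ gives $\sigma_{001}\ast\sigma_{001} = q_1q_3 + q_2 - q_3\sigma_{100} + q_3\sigma_{010} - 2\sigma_{101} + \sigma_{011}$. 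Here $\sigma_{110},\sigma_{101},\sigma_{011}$ are themselves polynomial in the three generators: the $\sigma_{010}$-column of $A$ gives $\sigma_{100}\ast\sigma_{010} = \sigma_{110} + q_1q_3$, and likewise the $\sigma_{100}$- and $\sigma_{010}$-columns of $C$ express $\sigma_{101}$ and $\sigma_{011}$ in the generators. Thus the three displayed equalities are genuine relations among $\sigma_{100},\sigma_{010},\sigma_{001}$ that reduce to the classical relations at $q=0$, which is exactly what is required.

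The point to emphasize is that the difficulty is concentrated entirely in what precedes this wrap-up, not in the wrap-up itself. The hard inputs are: the vanishing of the relevant Gromov-Witten invariants via the curve neighborhoods of Section~\ref{curve nbhds} and Lemma~\ref{curve nbhd technique}; the explicit computations of $\GW{[Z_\varepsilon]}{\beta_3}$ and $\GW{[pt]}{\beta_2}$ using Corollary~\ref{fibercor}; the brute-force solution of the system $[A,B] = [A,C] = [B,C] = 0$ for the remaining unknown invariants; and the final computation $y_3 = 1$. The only genuine conceptual point in assembling these into the theorem is the applicability of \cite[Proposition 11]{FultonPandharipande}, which rests precisely on the complete-intersection property of $H^*(Z)$ established at the outset.
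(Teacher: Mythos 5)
Your proposal is correct and follows essentially the same route as the paper: establish the classical presentation, invoke \cite[Proposition 11]{FultonPandharipande} to reduce to quantizing the three quadratic relations, and read the quantized relations off the Chevalley matrices $A$, $B$, $C$ after substituting $y_3=1$. The only difference is that you spell out the justification for applying Fulton--Pandharipande (regular sequence, Poincar\'e polynomial check, graded induction) where the paper simply cites the result, which is a reasonable elaboration rather than a different argument.
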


We can also record the ``Giambelli formula,'' the representation of the vector
	space generators $\sigma_{000},\sigma_{100},\ldots,\sigma_{011},\sigma_{111}$
	as polynomials in the algebra generators
	$\sigma_{100},\sigma_{010},\sigma_{001}$.

\begin{cor} In $QH^*(Z)$, the Giambelli formulae for the classes
	$$\sigma_{110},\sigma_{101},\sigma_{011},\sigma_{111}$$ are as follows:
	\begin{align*} \sigma_{110} &= \sigma_{100}\sigma_{010} - q_1q_3\\
		\sigma_{101} &= \sigma_{100}\sigma_{001} - q_3\sigma_{100} +
	q_3\sigma_{010} + q_1q_3\\ \sigma_{011} &= \sigma_{010}\sigma_{001} +
	q_1q_3\\ \sigma_{111} &= \sigma_{100}\sigma_{010}\sigma_{001} +
	q_1q_3\sigma_{100} \end{align*} \end{cor}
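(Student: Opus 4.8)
\emph{Proof proposal.} The plan is to read the four Giambelli formulae directly off the Chevalley matrices $A$, $B$, $C$, using the value $y_3 = 1$ established in the preceding Proposition, together with associativity of the quantum product for the degree-three class $\sigma_{111}$. Recall that the ordered basis of $QH^*(Z)$ is $\sigma_{000},\sigma_{100},\sigma_{010},\sigma_{001},\sigma_{110},\sigma_{101},\sigma_{011},\sigma_{111}$, and that the $j$-th column of $A$ (resp. $B$, $C$) records the expansion of $\sigma_{100}\ast\sigma_{\varepsilon_j}$ (resp. $\sigma_{010}\ast\sigma_{\varepsilon_j}$, $\sigma_{001}\ast\sigma_{\varepsilon_j}$) in this basis.

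First I would substitute $y_3 = 1$ into $A$, $B$, $C$ and read off the three relevant columns (the $\sigma_{010}$-column of $A$, the $\sigma_{001}$-column of $A$, and the $\sigma_{001}$-column of $B$, which agrees with the $\sigma_{010}$-column of $C$ by commutativity), obtaining
\begin{align*}
\sigma_{100}\ast\sigma_{010} &= \sigma_{110} + q_1q_3,\\
\sigma_{100}\ast\sigma_{001} &= \sigma_{101} + q_3\sigma_{100} - q_3\sigma_{010} - q_1q_3,\\
\sigma_{010}\ast\sigma_{001} &= \sigma_{011} - q_1q_3.
\end{align*}
Solving each relation for the higher class gives the first three formulae of the corollary verbatim. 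As a consistency check one can compare the resulting expression $\sigma_{110} = \sigma_{100}\sigma_{010} - q_1q_3$ with the expression for $\sigma_{110}$ extracted from the second relation $\sigma_{010}^2 = q_1q_3 + 2q_1\sigma_{100} - q_1\sigma_{010} + q_1\sigma_{001} + \sigma_{110}$ of the presentation theorem; these agree once $y_3 = 1$.

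For $\sigma_{111}$ I would compute the triple product $\sigma_{100}\ast\sigma_{010}\ast\sigma_{001}$ in two steps: first $\sigma_{100}\ast\sigma_{010} = \sigma_{110} + q_1q_3$ as above, and then multiply by $\sigma_{001}$. Reading the $\sigma_{110}$-column of $C$ (with $y_3 = 1$) gives $\sigma_{001}\ast\sigma_{110} = \sigma_{111} - q_1q_3\sigma_{100} - q_1q_3\sigma_{001}$, so
\begin{align*}
\sigma_{100}\ast\sigma_{010}\ast\sigma_{001} &= \sigma_{001}\ast(\sigma_{110} + q_1q_3)\\
&= \sigma_{111} - q_1q_3\sigma_{100} - q_1q_3\sigma_{001} + q_1q_3\sigma_{001}\\
&= \sigma_{111} - q_1q_3\sigma_{100},
\end{align*}
whence $\sigma_{111} = \sigma_{100}\sigma_{010}\sigma_{001} + q_1q_3\sigma_{100}$.

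There is essentially no deep obstacle remaining once all the Gromov-Witten invariants have been pinned down: the corollary is bookkeeping in the Chevalley matrices. The only point requiring care is the $\sigma_{111}$ computation, where one must invoke associativity of the quantum product — valid by the general theory of quantum cohomology, and also visible in the fact that the Chevalley matrices commute — both to make sense of the unparenthesized triple product and to check that an alternative bracketing (e.g. $\sigma_{100}\ast(\sigma_{010}\ast\sigma_{001})$, computed from the $\sigma_{011}$-column of $A$) yields the same answer.
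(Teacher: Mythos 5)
Your proposal is correct and follows exactly the route the paper intends: the corollary is read off from the Chevalley matrices after substituting $y_3=1$ (the paper gives no separate proof, remarking only that the Chevalley formula can now be read from the matrices), and your column extractions and the $\sigma_{111}$ computation via $\sigma_{001}\ast\sigma_{110}=\sigma_{111}-q_1q_3\sigma_{100}-q_1q_3\sigma_{001}$ all check out against the stated matrices.
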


Using these formulas, we are able to verify that the ring $QH^*(Z)$ is indeed
	associative.

\section{Conjecture $\mathcal{O}$}
	Consider the operator $\hat{c}_1:H^*(Z)\to H^*(Z)$ defined as follows:
	let $c_1:QH^*(Z)\to QH^*(Z)$ be the operator defined by multiplication
	by $c_1(-K_Z)$, and let $\hat{c}_1$ denote the specialization of $c_1$
	with all quantum parameters set equal to one.

	Conjecture $\mathcal{O}$, which is related to the Gamma conjectures of
	Galkin, Golyshev, and Iritani (see \cite{conjOhomog}), is a statement
	concerning the eigenvalues of $\hat{c}_1$. Namely, Conjecture
	$\mathcal{O}$ states that if $Z$ is Fano, then the following properties
	hold:
	\begin{enumerate}
		\item Let $\delta_0$ denote the maximum modulus among the
			eigenvalues of $\hat{c}_1$. Then $\delta_0$ is one of
			the eigenvalues of $\hat{c}_1$, and occurs with
			multiplicity one;
		\item If $\delta$ is any eigenvalue of $\hat{c}_1$ with
			$|\delta| = \delta_0$, then there is an $r$th root of
			unity $\zeta$ such that $\delta = \delta_0\zeta$, where
			$r$ is the Fano index of $Z$.
	\end{enumerate}

	Since the Bott-Samelson variety $Z = Z(\alpha_1,\alpha_2,\alpha_1)$ is
	Fano (see Remark \ref{Fano}), and using the Chevalley matrices from the
	last section, we can write the matrix for $\hat{c}_1$:
	$$ \hat{c}_1 = \begin{pmatrix}
		0 & 2 & 2 & -2 & 0 & 0 & 3 & 4 \\
		3 & -1 & 2 & 1 & 2 & 4 & 0 & 3 \\
		1 & 1 & -1 & -1 & 0 & 0 & 2 & 0 \\
		2 & 0 & 1 & 0 & 2 & 2 & 0 & 0 \\
		0 & 1 & 4 & 0 & -1 & 1 & 0 & 2 \\
		0 & 2 & 0 & -1 & 1 & -1 & 0 & 0 \\
		0 & 0 & 2 & 3 & 0 & 1 & 0 & 2 \\
		0 & 0 & 0 & 0 & 2 & 3 & 2 & 0 \\
	\end{pmatrix} $$

	This matrix has eight distinct eigenvalues which are approximated
	numerically, the eigenvalue with the largest modulus is real, and all
	other eigenvalues have strictly smaller modulus. In particular,
	Conjecture $\mathcal{O}$ holds for the Bott-Samelson variety
	$Z(\alpha_1,\alpha_2,\alpha_1)$.

	\begin{thm}
		The Conjecture $\mathcal{O}$ holds for the Bott-Samelson
		variety $Z = Z(\alpha_1,\alpha_2,\alpha_1)$ (in Type $A_2$).
	\end{thm}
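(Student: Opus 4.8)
The plan is to reduce the statement to explicit finite computations with the $8\times 8$ integer matrix $\hat c_1$ displayed above. First I would pin down the Fano index $r$. By Remark~\ref{Fano} we have $-K_Z = 3\sigma_{100}+\sigma_{010}+2\sigma_{001}$, and since $\{[Z_\varepsilon]\}$ is an integral basis of $H^*(Z)$ the dual classes $\{\sigma_{100},\sigma_{010},\sigma_{001}\}$ form a $\mathbb{Z}$-basis of $\mathrm{Pic}(Z)=H^2(Z;\mathbb{Z})$; hence the largest integer $r$ with $-K_Z\in r\cdot\mathrm{Pic}(Z)$ is $\gcd(3,1,2)=1$, so $r=1$. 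Consequently part (2) of Conjecture $\mathcal{O}$ says simply that the maximal-modulus eigenvalue of $\hat c_1$ is unique, and, combining this with the multiplicity-one clause of (1), what must be established is precisely: $\hat c_1$ has a positive real eigenvalue $\delta_0$ which is simple and strictly larger in absolute value than every other eigenvalue.

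Next I would compute the characteristic polynomial $p(t)=\det(tI-\hat c_1)$, a monic degree-$8$ polynomial with integer coefficients, directly from the matrix. To see that all eigenvalues are simple it suffices to verify $\gcd(p,p')=1$ in $\mathbb{Q}[t]$ (equivalently $\mathrm{disc}(p)\neq 0$), a terminating Euclidean-algorithm computation; this yields the eight distinct eigenvalues asserted in the statement, and in particular shows $\hat c_1$ is diagonalizable with all eigenvalues of algebraic multiplicity one. I would then run Sturm's theorem on $p$ to isolate its real roots inside explicit rational intervals, identifying the largest real root $\delta_0$ and certifying $\delta_0>0$ (e.g.\ from the signs of $p(0)$ and $p(N)$ for a suitable integer $N$).

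The remaining, and principal, point is to show $\delta_0$ strictly dominates all the other roots --- real and complex --- in modulus; the usual Perron--Frobenius shortcut is unavailable here because $\hat c_1$ has negative entries. I would choose a rational $\rho$ with $r'_{\max}<\rho<\delta_0$, where $r'_{\max}$ is the largest modulus among the non-dominant real roots found by the Sturm analysis, and then certify that $p$ has exactly one zero (necessarily $\delta_0$) in the region $\{z\in\mathbb{C}:|z|>\rho\}$. Keeping everything in exact arithmetic, this count can be obtained by forming the reciprocal polynomial $q(w)=w^8 p(\rho/w)$ and applying the Schur--Cohn criterion to count the zeros of $q$ in the open unit disk (or, equivalently, by a winding-number/argument-principle evaluation of $p$ along $|z|=\rho$); since Sturm already supplies one real root of $p$ beyond $\rho$, the uniqueness of a root outside $|z|=\rho$ forces it to be $\delta_0$, which is then real, positive, simple, and strictly dominant. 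This verifies both clauses of Conjecture $\mathcal{O}$. The delicate part of the argument is exactly this last step: selecting $\rho$ so that the Schur--Cohn (or argument-principle) count comes out cleanly, and carrying that count through without error for a degree-$8$ polynomial whose roots carry no special structure. (If a computer-assisted argument is acceptable, the same conclusions follow at once from a single certified interval-arithmetic eigenvalue computation exhibiting the eight eigenvalues as well-separated, with the one of largest modulus real and positive.)
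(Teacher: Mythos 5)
Your proposal takes essentially the same route as the paper: both reduce Conjecture $\mathcal{O}$ to a finite spectral check on the explicit $8\times 8$ integer matrix $\hat c_1$, the paper simply asserting the outcome of a numerical eigenvalue approximation. Your version is if anything more careful than the paper's --- you pin down the Fano index $r=\gcd(3,1,2)=1$ explicitly and replace the numerical approximation with an exact-arithmetic certification (squarefreeness of the characteristic polynomial, Sturm isolation of the dominant real root, and a Schur--Cohn count of the roots outside $|z|=\rho$) --- and it is correct.
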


\bibliographystyle{amsalpha} \bibliography{references}

\end{document}